\newcommand{\indep}{\perp \!\!\! \perp}
\newcommand{\R}{\mathbb{R}}
\newcommand{\eS}{\mathbb{S}}
\newcommand{\eP}{\mathbb{P}}
\newcommand{\X}{\mathbb{X}}
\newcommand{\E}{\mathbb{E}}
\newcommand{\iP}{\mathcal{P}}
\newcommand{\fp}{f_{\varphi}}
\newcommand{\iX}{\mathcal{X}}
\newcommand{\lamb}{\lambda^{\mathbf{m}}}
\newcommand{\bX}{X}
\newcommand{\bC}{\mathbf{C}}
\newcommand{\bT}{\mathbf{T}}
\newcommand{\q}{\mathbf{q}}
\newcommand{\fe}{\mathsf{le}}
\newcommand{\ch}{\mathsf{ch}}
\newcommand{\ind}{\mathbbm{1}}
\newcommand{\tw}{\widetilde{w}}
\newcommand{\Du}{\Delta^{(1)}}
\newcommand{\Dd}{\Delta^{(2)}}
\newcommand{\Dt}{\Delta^{(3)}}
\newcommand{\Dq}{\Delta^{(4)}}
\newcommand{\Di}{\Delta^{(i)}}
\newcommand{\Dj}{\Delta^{(j)}}
\newcommand{\Ez}{\mathbb{E}_{\indep}}
\newcommand{\Eo}{\mathbb{E}}
\newcommand{\V}{\mathbb{V}}
\newcommand{\Vz}{\mathbb{V}_{\indep}}
\newcommand{\Vo}{\mathbb{V}}
\newcommand{\Pz}{\mathbb{P}_{\indep}}
\newcommand{\Po}{\mathbb{P}}
\newcommand{\dd}{\mathop{}\!\mathrm{d}}
\theoremstyle{plain}
\newtheorem{theorem}{Theorem}[section]
\newtheorem{prop}[theorem]{Proposition}
\newtheorem{lemma}[theorem]{Lemma}
\newtheorem{model}{Model}
\theoremstyle{remark}
\newtheorem{remark}[theorem]{Remark}
\title[Synchronization detection]{Separation rates for the detection of synchronization of interacting point processes in a mean field frame. Application to neuroscience. }
\author{Josu\'e Tchouanti$^1$}
\address{$^1$Neuromod Institute, Universit\'e C\^ote-d'Azur}
\email{josue.tchouanti\_fotso@math.univ-toulouse.fr}
\author{\'Eva L\"ocherbach$^2$}
\address{$^2$
CMAP, Ecole Polytechnique,  Institut Polytechnique de Paris,  Palaiseau}
\email{eva.loecherbach@polytechnique.edu}
\author{Patricia Reynaud-Bouret$^3$}
\address{$^3$Universit\'e C\^ote d'Azur, CNRS, LJAD, France}
\email{Patricia.Reynaud-Bouret@univ-cotedazur.fr}
\author{Etienne Tanr\'e$^4$}
\address{$^4$Université Côte d'Azur, Inria, CNRS, LJAD, France}
\email{Etienne.Tanre@inria.fr}
\begin{document}

\begin{abstract}
Permutation tests have been proposed by Albert et al. (2015) to detect dependence between point processes, modeling in particular spike trains, that is the time occurrences of action potentials emitted by neurons.  Our present work focuses on exhibiting a criterion on the separation rate to ensure that the Type II errors of these tests are controlled non asymptotically. This criterion is then discussed in two major models in neuroscience: the jittering Poisson model  and  Hawkes processes having \(M\) components interacting in a mean field frame and evolving in stationary regime. For both models,  we obtain a lower bound of the size \(n\) of the sample
necessary to detect the dependency between two neurons.
\end{abstract}

\maketitle

\noindent\textit{\textbf{Keywords and phrases:} Permutation Test, Detection of synchronization, Hawkes Processes, Mean field interactions, Poisson Processes.}

\medskip

\noindent\textbf{MSC2020 subject classification: }62G10, 62M07,  62F40

\section{Introduction}
Neurons in the brain form a vast and dense  directed network, in which each vertex (the neurons) emits action potentials that excite or inhibit downstream vertices. When recordings are performed, neurobiologists are usually able to track the spikes of a few neurons, that is, the times at which a neuron emits an action potential. The set of spikes of a given neuron is commonly called a \textit{spike train}. The neurons that are recorded in a given region often show synchronization during specific periods depending on the cognitive process at play, even if they might be not physically directly connected. Here, \textit{synchronization} means that the spikes of these neurons seem to appear "at the same time" during a given period. This phenomenon varies over time and usually marks cognitive events of importance, so that  synchronization is usually thought a key element to understand the neural code, that is the way the brain encodes information \cite{singer, riehle}.  

Let us formalize the problem in the case of two neurons emitting two spike trains, modeled by two point processes $X^1$ and $X^2$. We reformulate the neuroscience problem of synchronization detection as a testing problem where one wants to reject the null hypothesis "$X^1$ is independent from $X^2$" by counting the number of coincidences. Here by number of coincidences we mean the number of couples of spikes (one per neuron) that appear almost at the same time. Then the test rejects the null hypothesis if the number of coincidences is too large.
Despite the fact that the synchronization is the result of the physical connectivity hidden behind, neurobiologists usually separate completely the problem of detecting synchronization from the problem of understanding how the network produces such phenomena. The above test is therefore usually applied by computing a reference distribution under the null hypothesis, which is done either by assuming that the spike trains %
have a known distribution (Poisson \cite{tuleau,pipa2013impact,grun1999detecting} or Integrate-and-Fire \cite{tamborrino2012})
or by a bootstrap distribution \cite{shuffling,mel2015,mel2016}. 
In particular, it has been proved that permutation tests (based on permutation of i.i.d. trials) lead to very  good results in practice because of their non asymptotic properties \cite{mel2015}.

In the present work, we investigate the non asymptotic separation rate of these permutation tests, focusing on two main examples in neuroscience. Our main goal is to understand what the fact that the test rejects the null hypothesis and detects synchronization reveals on the underlying link between the processes.  

The first model we consider  is the injection Poisson model which is a classic model of synchronization  \cite{date,grun1999detecting}. 
For this model, we are in particular interested in the precise relationship between the minimal number of trials and the injection strength which are necessary to detect synchronization.

The second model is a homogeneous network of Hawkes processes. This case is of particular relevance since it is the first time that one tries to mathematically link the shape of the hidden network with the synchronization detection. Here we envision the case of a homogeneous network of exponential Hawkes processes of size $M$. The integer $M$ represents the size of the relevant network at play, the two recorded neurons being just two neurons embedded in this network. We study more specifically the particular case of an exponential memory function. 

A classical result by Delattre et al.~\cite{del} states that such a network has a mean-field limit when $M$ tends to infinity, meaning that in the mean field limit, the two recorded neurons are  independent, and therefore the corresponding spike trains satisfy the null hypothesis. Our goal is to precisely establish the link between $M$ and the minimal number of trials $n$, to be able to detect synchronization. 
We show that $n$ has to be larger than $M^2$ to detect the dependence between the two neurons. The implication for neuroscience is that if $M$ is too large then one will never detect synchronization, despite the fact that the neurons are physically connected, because we will never have enough trials to do so. In this sense,  we can do as if the neurons were independent, despite the fact that they are embedded in a common network.

Separation rates for general independence tests based on permutation have been recently investigated by Kim et al.~\cite{kim} (see also \cite{mel2019} for the case of point processes). We derive from \cite{kim} a general bound with precise variances that improves the one of \cite{mel2019}. The main purpose of the remainder of the paper is to interpret these bounds in terms of our model parameters, within the two main examples we have in mind, coming from neuroscience.

If the injection Poisson model has been rarely investigated in the mathematical literature from a probabilistic point of view, probably because of its simplicity, stochastic systems of interacting particles in a mean field frame and their associated non-linear limit processes have been extensively studied 
during the last three or four decades. To build a statistical theory within this frame is the goal of a relatively new research direction (with the exception of 
 \cite{kas}) that has been mainly explored with regards to 
estimation theory, and most often in the framework of interacting diffusion models. One can cite in particular the work of Della Maestra and Hoffmann \cite{DH1,DH2}, which is devoted to an asymptotic estimation theory in the large population setting  when $N$ interacting diffusive particles evolve in a mean field frame and are observed continuously in time over a fixed time interval $[0, T ], $ both in a parametric and in a non-parametric setting. See also \cite{amorino2023parameter} who deals with the parameter estimation of both drift and diffusion coefficient for discretely 
observed interacting diffusions, in the asymptotic where the step size tends to 0 and the number of particles to infinity.  In \cite{semi_param_McKV}, the authors have conducted a related study, yet in a semi parametric setting. Finally, in a series of two papers, Genon-Catalot and Laredo deal with the estimation problem in the limit model of a non-linear in the sense of McKean-Vlasov diffusion model, in the small noise regime \cite{gcatalot_laredo1, gcatalot_laredo2}. 

The statistical theory of systems of interacting point processes in a mean-field frame is less developed, but we can mention  \cite{mf_inference_Bacry} devoted to the estimation of the underlying parameters in systems of interacting Hawkes processes, as well as \cite{mf_inference_Delattre} which proposes a study in a very particular framework where $N$ point processes are observed during a fixed time interval,  interacting on an Erd\"os-Renyi random graph, and one wants to recover the underlying parameters of the linear Hawkes process as well as the connection probability.
To the best of our knowledge no test theory has been developed in either of the two frameworks (interacting diffusions or interacting point processes). Injection models in testing theory have been investigated slightly in \cite{mel2015,mel2016,tuleau} but mainly from a level (Type I error) point of view.

The goal of our paper is to understand in both situations (injection Poisson model and Hawkes processes in a mean-field frame) the \textit{separation rate}. This separation rate is a measure of the distance at which a certain distribution, $\mathcal{P}$ -- the joint distribution of the two spike trains $ X^1 $ and $X^2 $ -- has to be from the null hypothesis to be rejected by the test with large probability. The ``measure'' we use for this purpose is the difference between the  expectation of the coincidence counts under $\mathcal{P}$ and the expectation under $\mathcal{P}_1\otimes\mathcal{P}_2 $, where $\mathcal{P}_1$ and $\mathcal{P}_2 $ are the marginals of $\mathcal{P}$. Bounds on these separation rates can in turn be interpreted as a minimal number of trials to detect a given dependence.

Our approach to obtain the separation rates is non asymptotic, in the line of \cite{BarLauH1,BarLauH2,fromLau,from,from2}. 
The rates that we are getting are parametric in $n$, but the interesting feature is their dependence on the other important parameters of the problem (like $M$, the size of the hidden homogeneous network, or the strength of injection), and this is why the non asymptotic approach on the separation rates is important.

We treat the Poisson injection model in the most general way. However, concerning homogeneous networks of spike trains, we consider only a very particular case (namely exponential Hawkes processes). This is due to the fact that the general separation bounds we derive involve the computation of variances of U-statistics based on coincidences counts under $\mathcal{P}$, 
which involves the computation of cumulants of order 4 for the corresponding Hawkes process. At this point, we leveraged the result of Jovanovi\'c et al.~\cite{rotter} which gives an algorithmic way to compute these quantities. Related results are due to Privault~\cite{priv} but in a more restricted framework, that can not be applied in our case. 
Most importantly, we show that all cumulants are of size $1/M,$ and we study 
how these cumulants depend on the parameters of the exponential kernel and the window size of the coincidence count. We also give some more general results on cumulants, that might have a general interest per se: they are ready to use formulae for the cumulants of Hawkes processes, that are an application of the result in \cite{rotter}.

The rest of the article is structured as follows. In Section~\ref{sec:mainresults}, we formulate our main results, that is the separation rates of the test in both settings (injection Poisson and homogeneous exponential Hawkes processes). In Section~\ref{sec:simulations}, we illustrate the sharpness of our bounds on simulations. In Section~\ref{sec:generalframework}, 
we sketch some proofs and provide the ready-to-use formulae on Hawkes cumulants. Full proofs containing all computations are postponed to the appendix.

    \section{Main results}\label{sec:mainresults}

        \subsection{Presentation of the problem}
        Let us start by formulating the problem of interest. We denote by $\iX$ the set of finite increasing sequences of spiking times \(0\leq \tau_1 < \cdots < \tau_\ell \leq T\) for $\ell\geq 0$ on a fixed time interval \([0,T]\). A point process $X$ has random values in 
        $\iX$, and its associated point measure is denoted $dX_t$. We consider a probability distribution $\iP$ on $\iX^2$ and denote by $\iP^1$ and $\iP^2$ its first and second marginals respectively. Given \textit{i.i.d.}
        observations $\X_n:=\left\{ X_1=(X^1_1,X^2_1),\cdots ,X_n=(X_n^1,X_n^2) \right\}$ from the same distribution $\iP$, we are interested in the test of independence
        \[ 
        (H_0):\iP = \iP^1\otimes\iP^2 ~\textrm{ versus }~(H_1):\iP\neq\iP^1\otimes\iP^2. 
        \]
        In terms of neuroscience, a typical observation $X_i=(X^1_i,X^2_i)$ corresponds 
        to the spike trains of two neurons, and a natural way to construct such a test consists of computing and comparing the number of coincidences 
        under each hypothesis. We count those coincidences thanks to a symmetric function $\varphi(X^1_i,X^2_i)$. In this work and following the ideas 
        of \cite{mel2016, tuleau}, what we actually count is the number of realisations of points of both variables with a delay $\delta>0$, $ \delta < T.$ More precisely, we {\color{black}set}%
        \begin{equation}\label{eq:defvarphi}
            \varphi(X^1_i,X^2_i) = \sum_{\substack{u\in X^1_i,v\in X^2_i}} \ind_{|u-v|\leq\delta} .
        \end{equation}
        In this framework, the test aims to compare the number of observed coincidences with the number of coincidences that corresponds to an independent model, that is, that would be observed under $(H_0)$. It is based on the statistics defined by
        \begin{equation}\label{eq:defCnXn}
            \bC_n(\X_n) = \frac{1}{n}\sum_{i=1}^n\varphi(X^1_i,X^2_i)
        \end{equation}
        which corresponds to the empirical mean number of coincidences in the sample. 
        The test works well whenever the distributions of this statistics under $(H_1)$ 
        and $(H_0)$ are well separated. 
        We denote by $\Po$ (and $\Eo$) the distribution (and  expectation) with respect to  $\X_n$ and 
        $\Pz$ (and $\Ez$)  the distribution (and  expectation) with respect to an \textit{i.i.d.}
        sample of size $n$ with similar marginals as $\X_n$ but with independent coordinates. Since $\Pz$, that is the distribution under $(H_0)$, is unknown in practice, we use a permutation 
        approach.
        
        \subsection{The permutation test}\label{sec:perm_test}
        Let us now introduce the permutation test and some of its most important properties.

        Let $\eS_n$ be the set of permutations of the integers 
        $\{1,\cdots,n\}$. Let \(\X_n\) be a sample and  $\pi\in\eS_n$. The permuted sample $\X^{\pi}_n$ is  
        obtained by permutation of the second spike train  in \(\X_n\) according to $\pi$. 
        Precisely, we set
        \begin{equation}
            \X^{\pi}_n = \left\{ X^{\pi}_1 = (X^1_1,X^2_{\pi(1)}), \cdots, X^{\pi}_n = (X^1_n,X^2_{\pi(n)}) \right\}
        \end{equation}
        and we consider the permuted statistics $\bC_n(\X^{\pi}_n)$. 
        
        Given the sample $\X_n$, we denote by $q_{1-\alpha}(\X_n)$ the $(1-\alpha)$-quantile of the uniform distribution $\frac{1}{n!}\sum_{\pi\in\eS_n}\delta_{\bC_n(\X^{\pi}_n)}.$ Then the permutation test 
        \[
        \ind_{\bC_n(\X_n)>q_{1-\alpha}(\X_n)}
        \]
        which rejects $(H_0)$ when the coincidence count is too large is exactly of level $\alpha$.  This follows from the fact that 
        under $(H_0),$ for any permutation \(\pi\in\eS_n\), the samples $\X^{\pi}_n$ and $\X_n$ have the same distribution. We refer the reader to \cite{lehrom, pesal}.  
        
        Now let us turn towards the control of the Type II error. The informal idea behind the permutation test is that, even if we are under $(H_1)$, if we permute the second coordinate, we can mimic $\Pz$ by associating $X_i^1$ with $X_j^2$ for two different trials $i$ and $j$, say for instance $i=1$ and $j=2$. One important quantity is therefore 
        \[
        \fp(X_1,X_2) = \varphi(X^1_1,X^2_1)-\varphi(X^1_1,X^2_2).
        \]
        We then introduce
        \begin{equation}\label{eq:defdeltaphi}
        \Delta_\varphi:= \Eo\left[\fp(X_1,X_2)\right] = \Eo\left[\varphi(X^1_1,X^2_1)\right]-\Ez\left[\varphi(X^1_1,X^2_1)\right]
        \end{equation}
        as a measure of how far the current distribution $\Po$ is from $(H_0)$, that is far from $\Pz$. In the sequel the separation rates of the permutation test will therefore be expressed in terms of  $\Delta_\varphi.$ 
        
        Kim et al.~\cite{kim} showed a general bound on the separation rates of independence tests by permutation. 
        By applying their result to our setting, one can show the following general lemma.
        \begin{lemma}\label{lem:critgen}
            Let $\alpha$ and $\beta$ be fixed constants in $(0,1)$. Then there exists an absolute constant $C>0$ such that if the number of trials $n$ satisfies $n\geq 3/\sqrt{\alpha\beta}$ and if 
            \begin{equation}\label{eq:20230127_1}
                \Delta_\varphi  \geq \frac{C}{\sqrt{n\alpha\beta}}\left\{ \sqrt{\Vz\big[ \fp(X_1,X_2) \big]} + \sqrt{\Vo\big[ \fp(X_1,X_2) \big]} \right\},
            \end{equation}
         then the Type II error of the permutation test satisfies
         \[
         \Po(\bC_n(\X_n)\leq q_{1-\alpha}(\X_n))\leq \beta.
         \]
        \end{lemma}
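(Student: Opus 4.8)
The plan is to follow the strategy of Kim et al.~\cite{kim} for permutation independence tests, specialised to the coincidence statistic $\bC_n$, and to carry out the variance bookkeeping that produces the explicit terms $\Vz[\fp(X_1,X_2)]$ and $\Vo[\fp(X_1,X_2)]$. The key preliminary observation is that the permutation mean $\bar\varphi:=\frac{1}{n^2}\sum_{i,j}\varphi(X^1_i,X^2_j)$ is invariant under relabelling of the sample, so that the event $\{\bC_n(\X_n)\le q_{1-\alpha}(\X_n)\}$ coincides with $\{\bC_n(\X_n)-\bar\varphi\le q_{1-\alpha}(\X_n)-\bar\varphi\}$, where the right-hand side is the $(1-\alpha)$-quantile of the \emph{centred} permutation distribution. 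A direct algebraic identity then gives
\[
\bC_n(\X_n)-\bar\varphi=\frac{n-1}{n}\,U_n,\qquad U_n:=\frac{1}{n(n-1)}\sum_{i\neq j}\fp(X_i,X_j),
\]
so that $\bC_n-\bar\varphi$ is, up to the harmless factor $\frac{n-1}{n}$, a $U$-statistic with kernel $\fp$. This is the structural reason why the variances of $\fp$, rather than of $\varphi$ itself, govern the separation rate, and why both the ``true'' variance $\Vo[\fp]$ and the ``independent'' variance $\Vz[\fp]$ appear.

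First I would split the Type II error at a deterministic level $t$, to be tuned near $\tfrac12\Delta_\varphi$:
\[
\Po\big(\bC_n-\bar\varphi\le q_{1-\alpha}-\bar\varphi\big)\le\Po\big(\bC_n-\bar\varphi\le t\big)+\Po\big(q_{1-\alpha}-\bar\varphi> t\big).
\]
For the first term I would use $\Eo[\bC_n-\bar\varphi]=\frac{n-1}{n}\Delta_\varphi$ together with the classical variance bound for $U$-statistics (applied to the symmetrisation of $\fp$), which yields $\Vo[\bC_n-\bar\varphi]\lesssim \Vo[\fp(X_1,X_2)]/n$, and then apply Chebyshev's inequality: this makes the first probability at most $\beta/2$ as soon as $t$ lies below $\frac{n-1}{n}\Delta_\varphi$ by a margin of order $\sqrt{\Vo[\fp]/(n\beta)}$.

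The second term is the \emph{crux}. Conditionally on the data, the permuted statistic $\bC_n(\X^{\pi}_n)-\bar\varphi$ has conditional mean zero, so a conditional Chebyshev inequality applied to the uniform random permutation $\pi$ shows that its $(1-\alpha)$-quantile is at most $\sqrt{\mathrm{Var}_\pi[\bC_n(\X^{\pi}_n)]/\alpha}$, where $\mathrm{Var}_\pi$ denotes the variance over $\pi$ given $\X_n$. It then remains to control the data-dependent quantity $\mathrm{Var}_\pi[\bC_n(\X^{\pi}_n)]$: I would compute its expectation over the sample using the standard variance formula for a linear statistic $\frac{1}{n}\sum_i a_{i,\pi(i)}$ under sampling without replacement, with $a_{ij}=\varphi(X^1_i,X^2_j)$, and show that $\Eo[\mathrm{Var}_\pi[\bC_n(\X^{\pi}_n)]]\lesssim \Vz[\fp(X_1,X_2)]/n$. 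A further Markov inequality then gives $q_{1-\alpha}-\bar\varphi\lesssim\sqrt{\Vz[\fp]/(n\alpha\beta)}$ outside an event of probability $\beta/2$. Choosing $t$ between the two thresholds forces the announced separation condition, while the assumption $n\ge 3/\sqrt{\alpha\beta}$ absorbs the $O(1/n)$ corrections (the factors $\frac{n-1}{n}$ and the $O(\Delta_\varphi/n)$ bias of $\bar\varphi$) into the absolute constant $C$.

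The main obstacle is precisely the variance matching of the previous paragraph: the combinatorics of sampling without replacement produces the doubly-centred sum of the matrix $(\varphi(X^1_i,X^2_j))_{i,j}$, involving its row means, column means and grand mean, whose expectations must be identified with the second moments of $\varphi$ under the independent coupling, that is with $\Vz[\fp]$. Keeping these constants sharp—rather than merely of the correct order—is what allows the clean statement with a single absolute constant $C$ and constitutes the improvement over the bound of \cite{mel2019}.
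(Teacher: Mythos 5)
Your overall route is the same as the paper's: the paper also passes to the U-statistic $\bT_n(\X_n)=\frac{1}{n(n-1)}\sum_{i\neq j}\fp(X_i,X_j)$ (your identity $\bC_n-\bar\varphi=\frac{n-1}{n}\bT_n$ is exactly its Eq.~\eqref{ustat_stat}), invokes the Kim et al.\ two-moment criterion
$\Eo[\bT_n]\geq\sqrt{2\Vo[\bT_n]/\beta}+\sqrt{2\Eo[\Vo[\bT_n(\X_n^{\Pi_n})|\X_n]]/(\alpha\beta)}$ (which is precisely your Chebyshev/conditional-Chebyshev/Markov split), and then bounds the two variances. Your only genuinely different choice is to compute the conditional permutation variance via the classical doubly-centred-array formula for the linear statistic $\frac1n\sum_i a_{i,\pi(i)}$, whereas the paper runs a seven-case combinatorial analysis of $\Eo[\fp(X^{\Pi_n}_i,X^{\Pi_n}_j)\fp(X^{\Pi_n}_k,X^{\Pi_n}_l)\,|\,\X_n]$; the two are equivalent since $\bC_n(\X^{\pi}_n)$ and $\bT_n(\X^{\pi}_n)$ differ by a permutation-invariant quantity, and your version is arguably tidier.

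There is, however, one concrete error in the step you yourself flag as the crux: the claimed bound $\Eo[\mathrm{Var}_\pi[\bC_n(\X^{\pi}_n)]]\lesssim \Vz[\fp(X_1,X_2)]/n$ is false in general. The array $a_{ij}=\varphi(X^1_i,X^2_j)$ has diagonal entries drawn from the \emph{dependent} coupling, so after double centering the $n$ diagonal terms contribute on the order of $n\,\Delta_\varphi^2$ to $\sum_{i,j}\tilde a_{ij}^2$, i.e.\ a term of order $\Delta_\varphi^2/n^2$ (plus $\Vo[\fp]/n^2$) in the permutation variance that is not dominated by $\Vz[\fp]/n$. The paper's corresponding bound \eqref{bound_pvar} keeps this as $\frac{4}{n^2}\Eo[|\fp(X_1,X_2)|^2]=\frac{4}{n^2}\big(\Vo[\fp]+\Delta_\varphi^2\big)+\frac{2(n-2)}{n^2}\Vz[\fp]$, and the resulting $\frac{2}{n}\sqrt{2/(\alpha\beta)}\,\Delta_\varphi$ on the right-hand side of the criterion is exactly what the hypothesis $n\geq 3/\sqrt{\alpha\beta}$ is there to absorb (giving the factor $2\sqrt2/3<1$). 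You do note that $O(\Delta_\varphi/n)$ corrections must be absorbed, but you attribute them only to the $\frac{n-1}{n}$ prefactor and a ``bias of $\bar\varphi$'', not to the diagonal of the permutation variance, so as written the variance-matching step does not close. The repair is exactly the paper's: retain the $\Eo[|\fp|^2]/n^2$ term, split off $\Delta_\varphi^2/n^2$, and absorb its square root into the left-hand side using $n\geq 3/\sqrt{\alpha\beta}$. With that correction your argument goes through.
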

        \begin{remark} In practice, accessing the $n!$ permutations is computationally too intensive. However one can approximate $q_{1-\alpha}(\X_n)$ by a Monte Carlo method. It can be proved that this does not affect the level of the test (see for instance \cite{mel2015} and the references therein). Also by slightly increasing $\beta$, one can show that this does not really affect the previous result either. {\color{black}Indeed by applying Dvoretzky-Kiefer-Wolfowitz inequality \cite{massartDKW}, it is easy to show for instance that, for a fixed $x>0$, with probability larger than $1-2e^{-2x^2}$, the $1-\alpha$ quantile obtained by Monte Carlo over $N_{sim}$ replicates, is upper bounded by the true quantile over all permutations at level $1-\alpha'$ with $\alpha'=\alpha-x/\sqrt{N_{sim}}$. Hence the test with the Monte Carlo quantile accepts $H_0$ even less than the test with the true quantile at level $\alpha'$. By slightly increasing $\beta$ to $\beta'$ such that $\alpha\beta=\alpha'\beta'$, \eqref{eq:20230127_1} will still be true and the conclusion will follow with error of second kind bounded by $\beta'$. Because this holds only with probability 
        larger than $1-2e^{-2x^2}$, one needs to increase slightly more the upper bound on the error of second kind.}
        
        However to keep the writing of the next results as simple as possible, we {\color{black}decided to} not incorporate this case and focus on the ideal test.
        \end{remark}
        
        Let us now interpret the above criterion within our two main examples, the injection Poisson case  and the network of Hawkes processes in a mean-field frame.
        
        \subsection{Jittering injection Poisson model}
        In this section, we are interested in the following model introduced by Date et al.~\cite{date} (see also \cite{grun1999detecting,tuleau}) under which the spike trains are modeled by Poisson processes. 
        \begin{model}\label{mod:jittering}
            Consider three independent Poisson processes \(Z^1\), \(Y^1\) and \(Y^2\) on $\R$ with respective 
            intensities \(\eta^1\), \(\lambda^1\) and \(\lambda^2\).
            \begin{itemize}
                \item The \textit{first} spike train \(X^1\) is a superposition of the points of \(Y^1\) and \(Z^1\).
                \item The \textit{second} spike train \(X^2\) is the superposition of the  points of \(Y^2\) and of a random perturbation \(Z^2\) of \(Z^1\). Precisely, we consider an \textit{i.i.d.} sequence of real valued random variables \((\xi_i)_{i\in\mathbb{Z}}\) independent of anything else. Denote by \( \cdots < u_{k-1} < u_k < u_{k+1} < \cdots\) the points of \(Z^1\), then the points of \(Z^2\) are \(\cdots, u_{k-1}+\xi_{k-1}, u_k + \xi_k, u_{k+1} + \xi_{k+1}, \cdots\).
            \end{itemize}
        \end{model}
        We prove in Proposition~\ref{prop:z2estunpoisson} that $Z^2$ is still a Poisson process of rate $ \eta^1 $ such that the spike trains $X^1$ and $X^2$ correspond to the points of Poisson processes with intensities $\lambda^1+\eta^1$ and  $\lambda^2+\eta^1$ respectively. For the convenience of the reader we give a proof of this fact in the appendix, 
        section~\ref{app:probabilisticresults}. 
        
        The following result provides a lower bound for the separation rate between $(H_0)$ and $(H_1)$, measured by $\Delta_\varphi$. From this we can extract a minimal number of observations $\mathbf{n_{min}}$ to ensure a Type II error less than $\beta$.
        \begin{theorem}\label{theo:jit}
        Let $\alpha,\beta\in (0,1)$, $0\leq \delta\leq T/2$ and 
            assume that the observations obeys 
            Model~\ref{mod:jittering} with parameters $\lambda^1,\lambda^2,\eta^1$ on the time interval $[0,T].$ Then
            \begin{equation}\label{eq:mean_jit}
                \Delta_\varphi = \eta^1 \E\left[\left(T-|\xi|\right)\ind_{|\xi|\leq\delta} \right].
            \end{equation} 
            Moreover there exists a constant $C>0$ not depending on the model parameters, such that if  $n\geq 3/\sqrt{\alpha\beta}$ and if
            \begin{equation}\label{eq:crit_jit}
                \Delta_\varphi \geq C\left[\sqrt{\frac{(1+\lamb\delta)(\lamb)^2\delta T}{n\alpha\beta}} + \frac{1+\lamb\delta}{n\alpha\beta}\right],
            \end{equation}
            with  $\lamb=\max\{\lambda^1,\lambda^2\}$ and  $\eta^1\leq \lamb$, then the Type II error of the permutation test satisfies
            \[
            \Po(\bC_n(\X_n)\leq q_{1-\alpha}(\X_n))\leq \beta.
            \]
            Moreover, there exists a constant $C'>0,$ not depending on the model parameters, such that the previous control of the Type II error is guaranteed as soon as 
            \begin{equation}\label{eq:min_size_jit}
                n\geq \mathbf{n_{min}} := C'\max\left\{ \frac{1}{\sqrt{\alpha\beta}} , \frac{1+\lamb\delta}{\alpha\beta \eta^1 T\eP(|\xi|\leq\delta)}\left( 1 + \frac{\lamb}{\eta^1}\frac{\lamb\delta}{\eP(|\xi|\leq\delta)} \right) \right\}.
            \end{equation}
        \end{theorem}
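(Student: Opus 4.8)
The plan is to verify the statement in three stages: first compute the mean $\Delta_\varphi$ exactly, then bound the two variance terms appearing in the general criterion of Lemma~\ref{lem:critgen}, and finally invert the resulting inequality to extract $\mathbf{n_{min}}$. For the first stage, I would use Proposition~\ref{prop:z2estunpoisson}, which tells us that $X^1$ and $X^2$ are superpositions of an independent ``noise'' part ($Y^1$, $Y^2$) and a shared part coming from $Z^1$ and its jittered copy $Z^2$. Writing $\varphi(X^1,X^2)=\sum_{u\in X^1,v\in X^2}\ind_{|u-v|\le\delta}$ and splitting each sum according to whether the point comes from the $Y$-part or the $Z$-part, I expect all cross terms to factorize under $\Po$ into products of marginal expectations, so that they coincide with their counterparts under $\Pz$ and cancel in the difference $\Delta_\varphi=\Eo[\varphi]-\Ez[\varphi]$. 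The only surviving contribution should be the diagonal coincidences between a point $u_k\in Z^1$ and its own image $u_k+\xi_k\in Z^2$; summing these over the stationary Poisson process $Z^1$ of rate $\eta^1$ on $[0,T]$ and using the Mecke / Campbell formula yields precisely $\eta^1\,\E[(T-|\xi|)\ind_{|\xi|\le\delta}]$, giving \eqref{eq:mean_jit}.

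For the second stage, I would bound $\Vz[\fp(X_1,X_2)]$ and $\Vo[\fp(X_1,X_2)]$ where $\fp(X_1,X_2)=\varphi(X^1_1,X^2_1)-\varphi(X^1_1,X^2_2)$. Since $X^1,X^2$ are Poisson with intensities at most $\lamb+\eta^1\le 2\lamb$, the coincidence count $\varphi$ is a bilinear functional of two independent (under $\Pz$) or weakly dependent (under $\Po$) Poisson measures. The natural tool is the multivariate Mecke formula, decomposing $\E[\varphi^2]$ into diagonal and off-diagonal terms. The off-diagonal part contributes a term of order $(\lamb)^2\lamb\delta\cdot T=(\lamb)^3\delta T$ (two independent points on each side, volume $\delta$ per coincidence, length $T$), while the diagonal part contributes the ``square of indicator equals indicator'' term of order $(\lamb)^2\delta T$; together these give the factor $(1+\lamb\delta)(\lamb)^2\delta T$ inside the first square root of \eqref{eq:crit_jit}. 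The second, purely $1/(n\alpha\beta)$ term absorbs the variance of the self-coincidence / shared-point contribution, whose size is governed by $(1+\lamb\delta)$. Throughout I would use $\eta^1\le\lamb$ to express every bound in terms of $\lamb$ alone. Plugging these variance bounds into \eqref{eq:20230127_1} and absorbing constants gives \eqref{eq:crit_jit}.

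For the final stage, I would read \eqref{eq:crit_jit} as a lower bound on $n$: it is an inequality of the form $\Delta_\varphi\ge C(\sqrt{A/n}+B/n)$ with $A=(1+\lamb\delta)(\lamb)^2\delta T/(\alpha\beta)$ and $B=(1+\lamb\delta)/(\alpha\beta)$. Solving $\Delta_\varphi\ge C\sqrt{A/n}$ for $n$ gives $n\ge C^2A/\Delta_\varphi^2$, and solving $\Delta_\varphi\ge CB/n$ gives $n\ge CB/\Delta_\varphi$; taking the maximum of these two (together with the standing requirement $n\ge 3/\sqrt{\alpha\beta}$) and substituting $\Delta_\varphi=\eta^1 T\,\eP(|\xi|\le\delta)\cdot(1-\E[|\xi|\mid|\xi|\le\delta]/T)$ — bounding the bracket below by a constant since $\delta\le T/2$ — should reproduce \eqref{eq:min_size_jit} after simplification, with the factor $1+(\lamb/\eta^1)(\lamb\delta/\eP(|\xi|\le\delta))$ emerging from the ratio $A/\Delta_\varphi^2$ and the isolated $B/\Delta_\varphi$ contributing the same shape.

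The main obstacle I anticipate is the variance computation in the second stage, specifically keeping careful track of which coincidence terms survive and at what order. Because $\fp$ is a difference of two coincidence counts sharing the common variable $X^1_1$, its variance mixes self-coincidences of the shared process $Z^1$ with genuinely independent Poisson contributions, and one must verify that the dominant term is indeed $(1+\lamb\delta)(\lamb)^2\delta T$ and not something larger. Getting the exponent of $\lamb$ right (and checking that the $\eta^1\le\lamb$ substitution does not hide a larger-order term coming from the shared points) is the delicate point; the algebra of the fourth-order moments of the Poisson coincidence functional, handled cleanly via the Mecke formula, is where the real work lies, while the mean computation and the final inversion are comparatively mechanical.
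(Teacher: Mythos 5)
Your overall route is the same as the paper's: decompose $X^1,X^2$ into the independent parts $Y^1,Y^2$ and the shared pair $Z^1,Z^2$, observe that only the diagonal (same-ancestor) coincidences of $\varphi(Z^1,Z^2)$ survive in $\Delta_\varphi$, evaluate them by a Campbell/Mecke argument to get \eqref{eq:mean_jit}, bound the two variances, plug into Lemma~\ref{lem:critgen} and invert. The mean computation and the final inversion of \eqref{eq:crit_jit} into \eqref{eq:min_size_jit} are both sound and match the paper's.

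The genuine gap is in how you propose to handle $\Vo\big[\fp(X_1,X_2)\big]$, and it is exactly at the point you yourself flag as delicate. The paper's bound \eqref{eq:var_jit} is $C(1+\lamb\delta)\big((\lamb)^2\delta T+\eta^1 T\,\eP(|\xi|\leq\delta)\big)$: the shared-point (self-coincidence) contribution is kept in the form $\eta^1 T\,\eP(|\xi|\leq\delta)$, which by \eqref{eq:deltaphilb} is at most $2\Delta_\varphi$. The second term $(1+\lamb\delta)/(n\alpha\beta)$ of \eqref{eq:crit_jit} then arises only because the criterion of Lemma~\ref{lem:critgen} becomes self-referential, $\Delta_\varphi\gtrsim\sqrt{(1+\lamb\delta)\Delta_\varphi/(n\alpha\beta)}$, which one solves as an inequality quadratic in $\sqrt{\Delta_\varphi}$. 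Your plan to ``use $\eta^1\leq\lamb$ to express every bound in terms of $\lamb$ alone'' destroys this: bounding $\eta^1 T\,\eP(|\xi|\leq\delta)$ by $\lamb T$ gives a variance of order $(1+\lamb\delta)\lamb T$, hence only the weaker requirement $\Delta_\varphi\gtrsim\sqrt{(1+\lamb\delta)\lamb T/(n\alpha\beta)}$, which does not imply \eqref{eq:crit_jit} (the latter has no $\sqrt{T}$ in its second term) and would inflate $\mathbf{n_{min}}$ accordingly. Your sentence that the $1/(n\alpha\beta)$ term ``absorbs the variance of the shared-point contribution, whose size is governed by $(1+\lamb\delta)$'' elides precisely the factor $\Delta_\varphi$ that makes the mechanism work; you need to state and use $\eta^1 T\,\eP(|\xi|\leq\delta)\leq 2\Delta_\varphi$ explicitly. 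A secondary, smaller slip: the off-diagonal contribution to the variance is of order $(\lamb)^3\delta^2T$ (three points, two windows of width $\delta$), not $(\lamb)^3\delta T$ as you write; your stated total $(1+\lamb\delta)(\lamb)^2\delta T$ is correct only with the former.
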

        
        Note first that {\color{black}the lower bound on } the measure of the distance between $(H_0)$ and $(H_1)$, $\Delta_\varphi$, only depends on the distribution of the perturbation $\xi$ and the frequence of the injected synchronizations $\eta^1$ (see \eqref{eq:mean_jit}). Next the bound \eqref{eq:crit_jit} states that the separation rate diminishes in $n^{-1/2}$ as one would expect in this parametric situation. Moreover we also recover the dependency in $\sqrt{T}$ that one would expect in this parametric Poissonian situation. 
        
       At this point the following remark is at order. Speaking of ``trials'' is a bit artificial here because of the memoryless property of the Poisson processes  which implies that observing $n$ independent copies of one Poisson process during a time interval of length $T$ is equivalent to observing a single trajectory of the same Poisson process during a time interval of length $ n T .$ In other words,  for a global duration of observation $\mathcal{T}$, any choice of $n$ and $T$ such that $\mathcal{T}=nT$ are essentially possible. Since we have renormalized our test statistics $ \bC_n(\X_n)$ by $ 1/n, $ the number of trials, in \eqref{eq:defCnXn}, to take into account of this fact, one has to multiply \eqref{eq:crit_jit} by $n$ in order to obtain on its right hand side indeed a term involving $ \sqrt{n T } = \sqrt{ \mathcal{T}}, $ as expected. We decided to work with the renormalized test statistics as it is commonly done. 
       
       Note also that to fix asymptotics, we work here with $T\geq 2\delta$ to ensure that in the middle of the interval $[0,T]$, we do not have boundary effects due to $\delta$.
        Because of this convention, we have that
        \begin{equation}\label{eq:deltaphilb}
        \eta^1 T \eP(|\xi|\leq\delta) \geq \Delta_\varphi \geq \eta^1 \frac{T}{2} \eP(|\xi|\leq\delta), 
        \end{equation}
        so that $\Delta_\varphi$ essentially behaves like $\eP(|\xi|\leq\delta)$ and increases with $\delta$. The right hand side of \eqref{eq:crit_jit} increases with $\delta$ as well, at least as $\sqrt{\delta}.$ Therefore, depending on the rate of growth of $\eP(|\xi|\leq\delta)$ as a function of  $\delta$, one can have situations where the largest possible choice of $\delta$ is a good choice, or situations where a compromise need to be made. 
        
       We stress that  \eqref{eq:min_size_jit} is essentially a lower bound on $\eta^1nT= \eta^1 \mathcal{T}$, the average total number of injected common points in the data. So if the lower bounds are achieved, we are essentially saying that, for the independence test to detect synchronization,  this number should be larger than
       \[
       \frac{1+\lamb\delta}{\eP(|\xi|\leq\delta)}\left( 1 + \frac{(\lamb)^2\delta}{\eta^1\eP(|\xi|\leq\delta)} \right).
       \]
       In the asymptotic $\delta\to 0$, if there exists $\gamma$ such that $\eP(|\xi|\leq\delta)=_{\delta\to 0} \mathcal{O}(\delta^\gamma),$ we obtain that the minimal number of observations 
       should be at least of order
       \[
        \mathcal{O}(\delta^{-\gamma})+ \frac{(\lamb)^2}{\eta^1} \mathcal{O}(\delta^{1-2\gamma}).
        \]
       Hence
       \begin{itemize}
           \item if $\gamma>1/2$, it decreases with $\delta,$ and choosing $\delta$ large  is a good choice. A typical example is when $\xi$ is uniform on $[-D,D]$, for which the best choice would be $\delta=D$.
           \item if $\gamma<1/2$, an equilibrium in $\delta$ can be found in $\mathcal{O}\left(\dfrac{\eta^1}{(\lamb)^2}\right)^{\frac{1}{1-\gamma}}$.
       \end{itemize}
        Therefore, one sees that, even in this very simple model, the choice of $\delta$ is in fact quite subtle and depends heavily on the distribution of the noise, that is not known in practice.

            \subsection{Hawkes model}
            Introduced for the first time by Hawkes \cite{hawkes}, this model describes a stochastic dynamics on a network 
            that is widely used nowadays to model the  activity of a neuronal network (see also \cite{bremaud1996stability} for multivariate and non linear cases). We envision here a particular case where interactions in the (unobserved) network of size $M$ are homogeneous, 
            while we observe only two neurons embedded in the network.

           \begin{model}\label{mod:Hawkes}
                                Let $\nu, a,b>0$ and $\ell = a/b \in (0,1)$.
                For $i\in\llbracket 1,M\rrbracket$, neuron \(i\) spikes at time \(t\) with intensity
                \begin{equation}
                    \lambda^i(t) = \nu + \frac{a}{M}\sum_{j=1}^M\int_{(-\infty,t)}e^{-b(t-s)}\dd N^j_s,
                \end{equation}
                where \(\dd N^j_t\) is the point measure associated to the spikes of neuron \(j\).  
                In this model, ${\color{black}(X^1,X^2)}$ is the observation of the first two coordinates of this $M$-multivariate Hawkes process in a stationary regime.
            \end{model}
            The hypothesis $\ell < 1 $ and the exchangeability of the network allow to construct the stationary distribution of the previous model as follows. 
            \begin{itemize}
            \item Simulate a univariate stationary Hawkes process $N$ with intensity 
            \begin{equation}\label{eq:intensity}
                \lambda(t) = \nu M + a\int_{(-\infty,t)}e^{-b(t-s)}\dd N_s.
            \end{equation}
            This process is well defined since $\ell= a/b<1$ is the integral of the interaction kernel of the univariate linear Hawkes process (see \cite{HawkesOakes1974, bremaud1996stability}). 
            \item Attribute at random, independently of anything else,  each point $T$ of $N$ to the process $N^j$ of neuron $j$ with  uniform probability on \(\llbracket 1, M\rrbracket\).
            \end{itemize}
            Note then that $\dd N_t = \sum_{j=1}^M \dd N_t^j$. Note also that the univariate process $N$ can be seen as a cluster process (see \cite{HawkesOakes1974}) that is described in more details in Section~\ref{sec:generalframework}. So one can see $N$ as a superposition of clusters of points, each of them arriving according to a Poissonian rate $\nu M$.

            For this model, one can show that
            \begin{theorem}\label{theo:Hawkes}
             Let $\alpha,\beta\in (0,1)$, $0\leq \delta\leq T/2$ and 
            assume that the observations obey 
            Model~\ref{mod:Hawkes} with parameters $\nu, a, b >0, $ and $\ell = a/b \in (0,1)$ on the time interval $[0,T]$. Then
            \begin{equation}\label{eq:mean_Haw}
                \Delta_\varphi = \frac{\nu\delta}{M} \frac{(2-\ell)\ell}{(1-\ell)^3} \left(1+ \left[b(1-\ell)(T-\delta)-1\right]\frac{1-e^{-b(1-\ell)\delta}}{b(1-\ell)\delta}\right).
            \end{equation}    
            
            Moreover there exists a constant $C>0$ not depending on the model parameters, such that if  $n\geq 3/\sqrt{\alpha\beta}$ and if
            \begin{equation}\label{eq:crit_Hawkes}
                \Delta_\varphi \geq C \sqrt{\frac{\delta T}{n\alpha\beta}} \left(\frac{\nu+a/M}{1-\ell}\right) \sqrt{1+\frac{\delta(\nu+a/M)}{1-\ell}\left[ 1 + \frac{\ell}{M(1-\ell)^2} \right] },
            \end{equation}
             then the Type II error of the permutation test satisfies
             \[
             \Po(\bC_n(\X_n)\leq q_{1-\alpha}(\X_n))\leq \beta.
             \]
                If in addition we assume that $b(1-\ell)T>4,$ then there exists a constant $C'>0$ not depending on the model parameters, such that the previous control of the Type II error is guaranteed if  $n\geq 3/\sqrt{\alpha\beta}$ and if 
                \begin{equation}\label{eq:min_size_Hawkes}
                 n \geq \frac{C'}{\alpha\beta} \frac{(M+a/\nu)^2 {(1-\ell)^4}}{\ell^2 T} \left( \frac{{\delta+\delta^2}\frac{(\nu+a/M)}{1-\ell}\left[ 1 + \frac{\ell}{M(1-\ell)^2} \right]}{(1-e^{-b(1-\ell)\delta})^2}\right).
                \end{equation}
            \end{theorem}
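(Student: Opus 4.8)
The entire statement reduces to the general criterion of Lemma~\ref{lem:critgen}, so the plan is to produce, for Model~\ref{mod:Hawkes}, (i) the exact value of $\Delta_\varphi$ and (ii) explicit upper bounds for the two variances $\Vz[\fp(X_1,X_2)]$ and $\Vo[\fp(X_1,X_2)]$; feeding these into \eqref{eq:20230127_1} yields \eqref{eq:crit_Hawkes}, and a final algebraic inversion yields \eqref{eq:min_size_Hawkes}. Throughout I would work in the stationary cluster representation described after Model~\ref{mod:Hawkes}: the aggregate process $N$ is a univariate stationary exponential Hawkes process with immigration rate $\nu M$ and branching ratio $\ell=a/b$, and each of its points is routed independently and uniformly to one of the $M$ neurons. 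Two facts are immediate from this construction and will be used repeatedly: each neuron has mean intensity $\nu/(1-\ell)$, and, since a single point of $N$ is sent to exactly one neuron, two distinct components share no common point, so all joint product densities of $(X^1,X^2)$ are the corresponding product densities of $N$ thinned by independent $1/M$ routing factors.

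First I would compute $\Delta_\varphi$. By definition $\Delta_\varphi=\Eo[\varphi(X^1_1,X^2_1)]-\Ez[\varphi(X^1_1,X^2_1)]$, and both expectations are integrals over the coincidence window $\{|s-t|\le\delta\}\cap[0,T]^2$ of the second-order product density of components $1$ and $2$. Writing the pair density of $N$ as $\rho_N^{(2)}(s,t)=m^2+\gamma(|t-s|)$ with $m=\nu M/(1-\ell)$ and $\gamma$ the reduced covariance density, the $1/M^2$ routing factor gives $\Eo[\varphi]=M^{-2}\int\!\!\int_{|s-t|\le\delta}(m^2+\gamma)$ while $\Ez[\varphi]=M^{-2}\int\!\!\int_{|s-t|\le\delta}m^2$, so the constant parts cancel and $\Delta_\varphi=M^{-2}\int\!\!\int_{|s-t|\le\delta}\gamma(|t-s|)\,\dd s\,\dd t$. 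For the exponential kernel one has the classical closed form $\gamma(\tau)=\tfrac{am(2-\ell)}{2(1-\ell)}e^{-b(1-\ell)|\tau|}$; substituting $a=\ell b$, $m=\nu M/(1-\ell)$ and integrating $\int\!\!\int_{|s-t|\le\delta}e^{-b(1-\ell)|t-s|}=2\int_0^\delta(T-\tau)e^{-b(1-\ell)\tau}\,\dd\tau$ produces exactly \eqref{eq:mean_Haw}.

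The main work, and the main obstacle, is bounding the two variances. Expanding $\fp=\varphi(X^1_1,X^2_1)-\varphi(X^1_1,X^2_2)$ gives $\V[\fp]=\V[\varphi(X^1_1,X^2_1)]+\V[\varphi(X^1_1,X^2_2)]-2\,\mathrm{Cov}(\varphi(X^1_1,X^2_1),\varphi(X^1_1,X^2_2))$. Since each $\varphi$ is a double sum over pairs of points, every term is a four-fold sum whose expectation, after separating the diagonal configurations ($u=u'$ and/or $v=v'$), is an integral over $[0,T]^4$ of factorial moment densities of the two-component process of orders up to four. Under $\Pz$ the three trains $X^1_1,X^2_1,X^2_2$ are independent, so these densities factorize and only single-component densities up to order two are needed; under $\Po$ the dependence of the pair $(X^1_1,X^2_1)$ forces the genuine joint cumulants of $(X^1,X^2)$ up to order four to appear. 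This is exactly where I would invoke the ready-to-use Hawkes cumulant formulae of Section~\ref{sec:generalframework}, derived from \cite{rotter}, together with the crucial structural fact that every interaction cumulant carries a factor $1/M$. Bounding each resulting integral then gives $\sqrt{\Vz[\fp]}+\sqrt{\Vo[\fp]}\le C\sqrt{\delta T}\,\tfrac{\nu+a/M}{1-\ell}\sqrt{1+\tfrac{\delta(\nu+a/M)}{1-\ell}\big[1+\tfrac{\ell}{M(1-\ell)^2}\big]}$, the leading $\delta T(\nu/(1-\ell))^2$ coming from the Poisson-like diagonal term and the bracketed correction collecting the higher-order and the $1/M$-suppressed interaction contributions. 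Substituting this bound into \eqref{eq:20230127_1} gives \eqref{eq:crit_Hawkes}.

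Finally, to obtain \eqref{eq:min_size_Hawkes} I would invert \eqref{eq:crit_Hawkes}, which amounts to $n\ge C^2(\alpha\beta)^{-1}\delta T\,\Delta_\varphi^{-2}(\tfrac{\nu+a/M}{1-\ell})^2\big(1+\tfrac{\delta(\nu+a/M)}{1-\ell}[\cdots]\big)$, and then lower-bound $\Delta_\varphi$ using the explicit formula \eqref{eq:mean_Haw}. Under the extra assumption $b(1-\ell)T>4$ together with $\delta\le T/2$, the bracket in \eqref{eq:mean_Haw} is dominated by its $b(1-\ell)(T-\delta)$ term, giving $\Delta_\varphi\gtrsim \tfrac{\nu}{M}\tfrac{(2-\ell)\ell}{(1-\ell)^3}(T-\delta)(1-e^{-b(1-\ell)\delta})\gtrsim \tfrac{\nu T\ell}{M(1-\ell)^3}(1-e^{-b(1-\ell)\delta})$. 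Plugging this lower bound in, using $(\nu+a/M)^2M^2/\nu^2=(M+a/\nu)^2$ and simplifying yields \eqref{eq:min_size_Hawkes}. The only genuinely delicate estimate here is the uniform control of the bracket in \eqref{eq:mean_Haw} from below, for which the hypothesis $b(1-\ell)T>4$ is exactly what is required.
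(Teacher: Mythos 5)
Your proposal is correct and follows essentially the same route as the paper: reduction to Lemma~\ref{lem:critgen}, computation of $\Delta_\varphi$ from the second-order cumulant of the aggregate process (your closed form for the covariance density agrees with \eqref{eq:cumul_order-2}), variance bounds via Hawkes cumulants up to order four with the $1/M$ scaling of the interaction terms, and the same lower bound \eqref{eq:bound_Hawkes} on $\Delta_\varphi$ under $b(1-\ell)T>4$ to invert the criterion. The only cosmetic difference is that the paper organizes the variance bounds as deviations from the mean-field limit variance $\V_0$ of \eqref{eq:var_mf_limit}, whereas you bound the same decomposition (Poisson-like diagonal term plus $1/M$-suppressed cumulant corrections) directly.
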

            The expression of the distance between $(H_0)$ and $(H_1)$ in \eqref{eq:mean_Haw} is complicated and depends on all the parameters of the model. In particular, it involves the quantity  $(1-e^{-b(1-\ell)\delta})/(b(1 - \ell ) \delta) $ which is the mean total offspring of one single jump -- that is, within one single cluster -- occuring during a time interval of length $ \delta . $ See Section~\ref{sec:generalframework} for more details.

            By looking at \eqref{eq:crit_Hawkes}, we see again, as for the jittering model, a parametric asymptotic in $n^{-1/2}$. We also see that the dependence is, as in the previous Poissonian case,  in $\sqrt{T}$. If the dependency structure of the Hawkes process makes it less straightforward that one can cut the total duration of observation $nT$ in as many trials as one wants, we still have that in a context of large $T$, much larger than the typical length of one cluster of $N$, $T$ is proportional to the number of clusters and the clusters are independent, which provides an asymptotic for the variances of the coincidences count in $\sqrt{T}$ as well.

            Finally let us comment \eqref{eq:min_size_Hawkes}. Let us start by discussing the condition on $b(1-\ell)T>4$: this compares $T$ to the characteristic time 
            $ 1/(b ( 1 - \ell ) ) $ of the cluster. Hence it ensures that one sees a certain number of clusters in one trial, making it possible to see interactions.
            
            Next we see the clear asymptotics in $M^2$ for the number of trials. As announced in the introduction, this means that we need a number of trials larger than the square of the size of the network to detect the dependency. In particular, this leads us to the following interpretation in neuroscience: if we accept the null hypothesis on real data, and if the network is homogeneous as given by Model~\ref{mod:Hawkes}, we cannot distinguish the fact that the neurons are really independent from the fact that they are embedded in a network of order $\sqrt{n}$ or larger, where $n$ is the number of trials.

            The factor $\nu$ also plays an important role. If $\nu$ tends to 0, the right hand side explodes: indeed if the rate of apparition of the clusters vanishes, we do not observe any cluster, hence no interaction.
            
            In the same way, when $\ell=a/b$ tends to $0$ it means that the interaction vanishes and again, the detection problem becomes more difficult to solve (more trials are needed). 
            
            Finally, the dependency in terms of $\delta$ is more complicated. When $\delta$ tends to 0 or infinity, the right hand side explodes, so there is a minimal value of the right hand side in $\delta$ that does not depend on $T$ nor $n$ or $\alpha$ and $\beta$. It seems however that a good choice would be of the order of $(b(1-\ell))^{-1}$, that is the characteristic time of the cluster.
    
    \section{Simulations}\label{sec:simulations}
        In this section, we perform numerical simulations in order to illustrate the separation rates proved for the permutation tests under Models~\ref{mod:jittering} and~\ref{mod:Hawkes}. 
        As explained in Section~\ref{sec:perm_test}, the permutation test is based on estimating the \((1-\alpha)\)-quantile of \(\bC_n(\X^{\pi}_n)\), conditionally to $\X_n$ where \(\X_n\) is the observed sequence of point processes and 
        the permutation \(\pi\) is chosen uniformly. To reduce computational time and following the idea of \cite{mel2015,mel2016}, we use a Monte Carlo method to approximate the bootstrap quantile, with only  \(\pi^1, \cdots, \pi^{B}\) i.i.d. permutations picked uniformly at random.

        In all the sequel, we use $B= 5000$ and $\alpha=0.05$.
    
        Since the level of the test was studied in \cite{mel2015,mel2016}, we do not provide results under the null hypothesis here, but we have checked on simulation that as expected, the permutation tests have a Type I error less than 0.05 in all cases. 
    
        \subsection{The jittering Poisson model}\label{sec:sim_jit}
        We have simulated the jittering Poisson model with $\lambda_1=\lambda_2= 10\,\text{Hz}$ and the injection rate \({\color{black}\eta_1}=1\,\text{Hz}\) on $[0,2]$. The noise $\xi$ is of {\color{black} four} different shapes (i) Uniform on $[0,0.1]$ (ii) of density $200*(0.1-x)$ on $[0,0.1]$ (iii) of density $200 x$ on $[0,0.1]$ {\color{black} (iv) Uniform on $[-0.1,0.1]$}.
        
        Case (i),  (ii) {\color{black}and (iv)} have a density that is lower bounded around 0, whereas case (iii) implies a jitter that is seldom in a vicinity of $0$. The resulting power of the permutation test is plotted in Figure~\ref{fig:Fig_jit}.

           \begin{figure}[ht]
\centerline{
\includegraphics[width=0.9\textwidth]{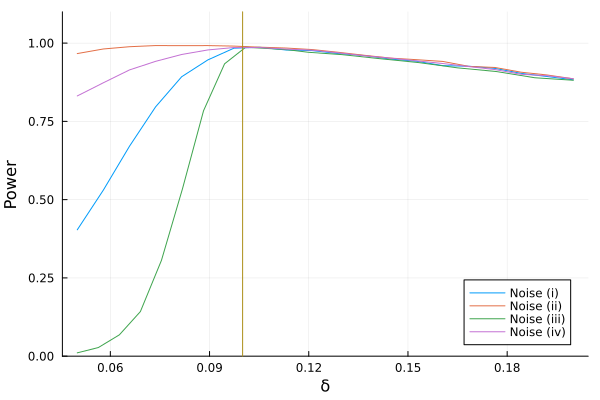}}
\caption{Influence of the noise and $\delta$ on the power of the test. We simulated $n=200$ trials and we replicated this simulation $N_{{\color{black}sim}}=10000$ times to estimate the power of the test. Noises  (i),(ii) (iii) {\color{black}and (iv)} are defined in Section~\ref{sec:sim_jit}. The parameter $\delta$ varies in a regular grid between 0.05 and 0.2. The vertical line indicates the boundary of support for all noises.}
\label{fig:Fig_jit}
\end{figure}
    
    We see that there is a high sensitivity of the test as a function of $\delta$, especially if the jittering noise does not charge a vicinity of $0$ (Noise (iii)). The best choice of $\delta$ seems to depend on the noise, but a fair choice seems to be the one that will encompass most of the support of the jittering noise. After this (beyond 0.10), the power is decreasing slowly, indicating that there is no good reason to choose a large $\delta$ in all cases but that a slight overestimation of the support is not very problematic. The fact that there should be an optimal $\delta$ is also in accordance with the theoretical results of Theorem~\ref{theo:jit}.

{\color{black}    
In Figure~\ref{fig:Fig_jit_etavar}, we simulated the jittering Poisson model with parameters
\(\lambda_1 = \lambda_2 = 10\,\text{Hz}\). 
However, we varied the value of the injection rate, 
choosing \(\eta_1 = 1.2, 1.1, 1.0, 0.9,\) and \(0.8\,\text{Hz}\). We use the noise (iv).
We observe that when the injection rate  is too small, the statistical power of the test decreases noticeably. 
In such cases, a larger sample size \(n\) would be required to achieve a comparable level of power.

This effect is further illustrated in Figure~\ref{fig:Fig_jit_n_var}, where all parameters are kept fixed 
except for \(\delta\) and the number of trials \(n\). 
Under noise (iv) and with parameters 
\(\lambda_1 = \lambda_2 = 10\,\text{Hz}\) and \(\eta_1 = 0.9\,\text{Hz}\), 
we observe that the statistical power increases with the number of trials \(n\).

          \begin{figure}[ht]
\centerline{
\includegraphics[width=0.9\textwidth]{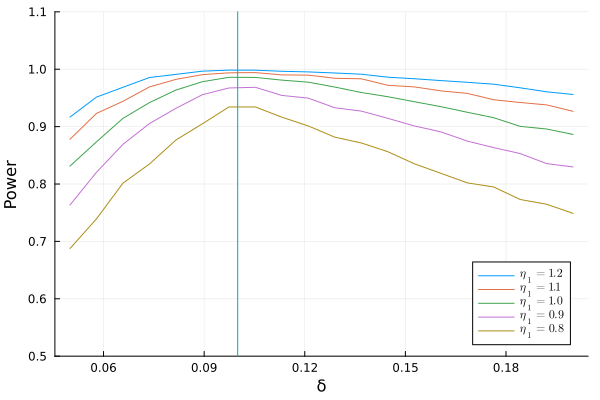}}
\caption{{\color{black} Influence of the injection rate and $\delta$ on the power of the test. We simulated $n=200$ trials and we replicated this simulation $N_{sim}=10000$ times to estimate the power of the test. We use noise (iv), defined in Section~\ref{sec:sim_jit}. The parameter $\delta$ varies in a regular grid between 0.05 and 0.2. The vertical line indicates the boundary of support for all noises.}}
\label{fig:Fig_jit_etavar}
\end{figure}

          \begin{figure}[ht]
\centerline{
\includegraphics[width=0.9\textwidth]{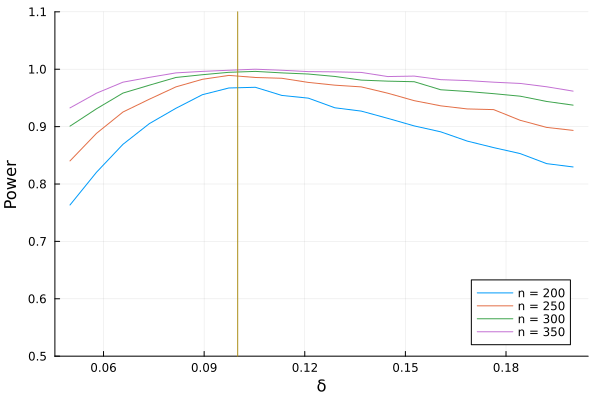}}
\caption{{\color{black} Influence of the number of trial \(n\) and $\delta$ on the power of the test. We simulated $n=200, 250, 300$ and \(350\) trials and we replicated this simulation $N_{{\color{black}sim}}=10000$ times to estimate the power of the test. We use noise (iv), defined in Section~\ref{sec:sim_jit}. The parameter $\delta$ varies in a regular grid between 0.05 and 0.2. The vertical line indicates the boundary of support for all noises.}}
\label{fig:Fig_jit_n_var}
\end{figure}
}
        
        \subsection{The Hawkes model}\label{sec:Haw_sim}
        
        We simulated a homogeneous network of Hawkes processes of size $M$ for various $M$ between 10 and 30 and different $a$ and $b$. We fixed $\nu =1$ and we simulated all trials on $[0,2]$ after a warming up time period of simulation of size \(10\) to reach stationarity.
        
        In Figure~\ref{fig:Fig_Haw1}, we see the evolution {\color{black}of} the power of the test as a function of $n$ the number of trials. As expected the power rapidly deteriorates when $M$ increases. With $M = 30$ and $n = 560$ we do not even reach a power of 80\%.  
        
        To understand if the estimate  of Theorem~\ref{theo:Hawkes} in terms of number of trials necessary to achieve a certain Type II error is valid, at least in term of $M$, we decided to fix the Type II error $\beta=0.05$ and find $n^*$ the minimal number of trials to achieve such a small error. This is represented in Figure~\ref{fig:Fig_Haw2}. We see a very good fit with the superposed curve, showing that indeed the number of necessary trials is growing as $M^2$.

                   \begin{figure}[ht]
\centerline{
\includegraphics[width=0.9\textwidth]{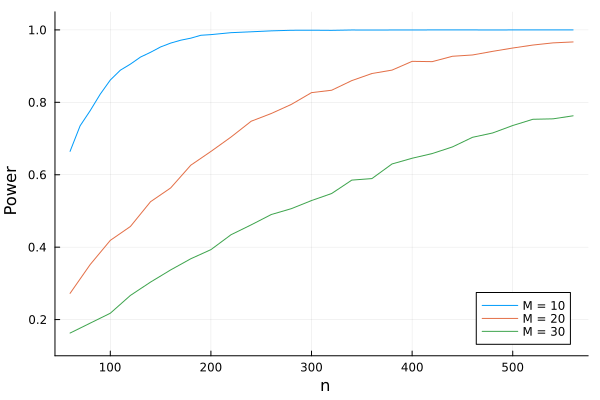}}
\caption{Influence of the number of trials $n$ and of the size of the network $M$.  The number of trials $n$ varied in a regular grid between 60 and 550 and the point processes were the first two coordinates of  an homogeneous network  of size $M = 10, 20, 30$, with $a=3$, $b=4$ and $\delta=0.1$. We simulated  $N_{{\color{black}sim}}=10000$ times the whole procedure to approximate the power of the test. }
\label{fig:Fig_Haw1}
\end{figure}
        \begin{figure}[ht]
\centerline{
\includegraphics[width=0.9\textwidth]{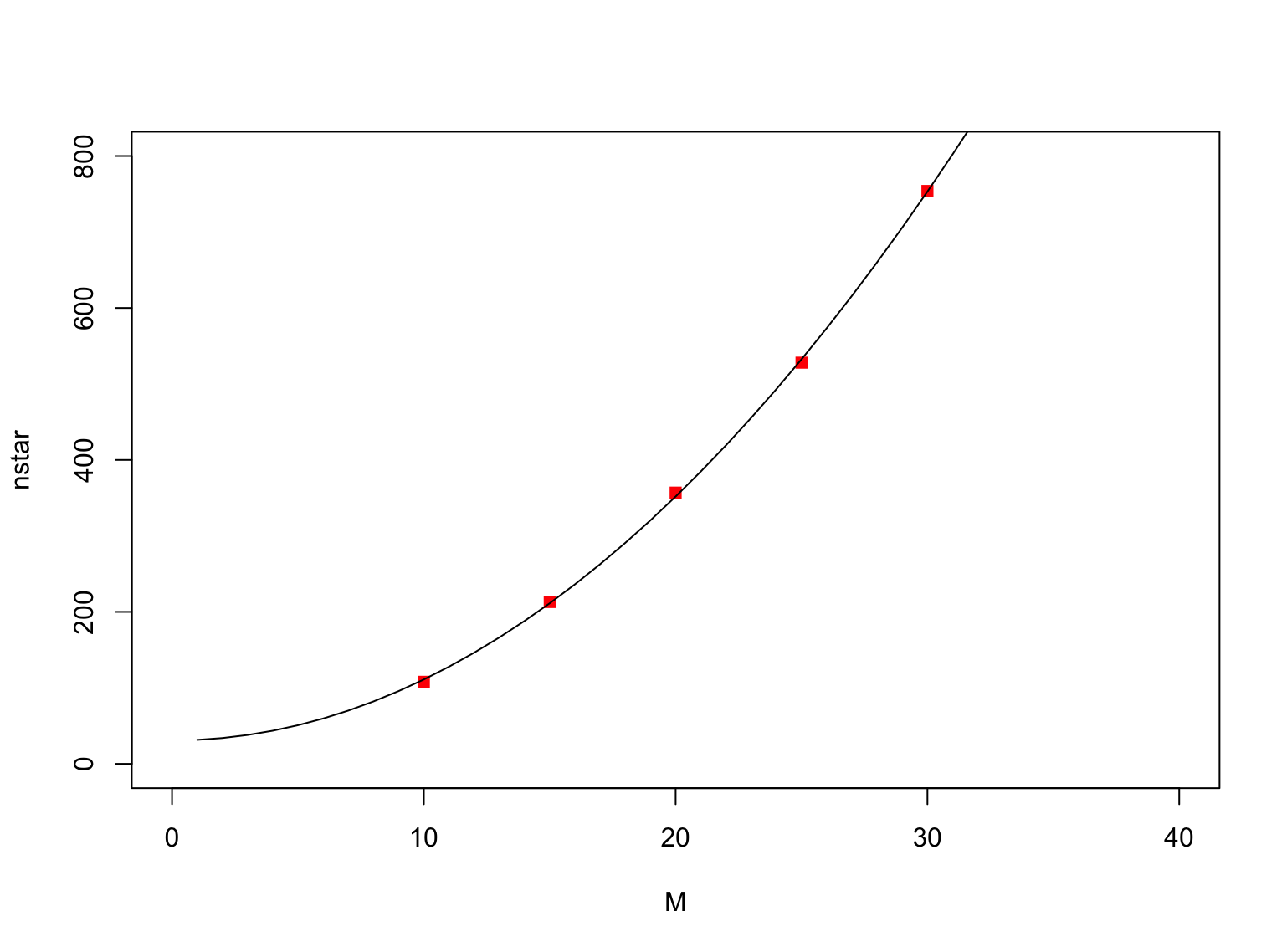}}
\caption{The number of necessary trials grows in $M^2$. We plot with red squares $n^*$ (see Section~\ref{sec:Haw_sim}) as a function of the size of the network $M$. The simulation setting is similar to the one of Figure~\ref{fig:Fig_Haw1} except that $N=10000$,  $a=10$, $b=20$, $\delta=0.1$. We fitted the curve by linear regression with respect to $M^2$ and superposed in black the curve $30 +0.8M^2$.}
\label{fig:Fig_Haw2}
\end{figure}
    \section{Sketch of  proofs and moment formula}\label{sec:generalframework}
    
    The proofs of the above results are rather technical and postponed to the appendix. We give here some ideas of the proofs. Moreover we give exact formulas or bounds on the moments of the coincidence counts in both settings: the injection Poisson model and the homogeneous network of exponential Hawkes process.
        
        \subsection{On another formulation of the test}
        As already discussed in Section~\ref{sec:perm_test}, the permutation test is based on the empirical mean number of coincidences $\bC_n(\X_n)$
        defined in~\eqref{eq:defCnXn}. 
        However the variance of this {\color{black}statistic} can be very large while the correlation between the observed pairs of spike trains is not. This can be due to a large amount of independent spikes for example. 
        Following \cite{mel2015,mel2016}, we consider therefore the U-statistics
        \begin{equation}\label{ustat}
            \bT_n(\X_n) := \frac{1}{n(n-1)}\sum_{i,j=1}^n\fp(X_i,X_j)
        \end{equation}
        where $\fp(X_i,X_j) = \varphi(X_i^1,X_i^2) - \varphi(X_i^1,X_j^2)$. 
        We have
        \begin{equation}\label{eq:mean_ustat}
            \E\left[ \bT_n(\X_n) \right] = \Eo\left[ \varphi(X^1_1,X^2_1) \right] - \Ez\left[ \varphi(X^1_1,X^2_1) \right],
        \end{equation}
        and so \(\bT_n(\X_n)\) is centered under the null hypothesis.
        As noted in \cite{mel2016}, we have 
        \begin{equation}
            \label{ustat_stat}
            \bT_n(\X_n)  = \frac{n}{n-1}\mathbf{C}_n(\X_n) - \frac{1}{n(n-1)}\sum_{i,j=1}^n\varphi(X^1_i,X^2_j).
        \end{equation}
        Let us now denote by $\Pi_n$ a random variable uniformly distributed on the set $\eS_n$ of permutations,  independent of the sample $\X_n$. We deduce from \eqref{ustat_stat} that for the permuted sample, we also have
        \begin{equation}\label{pustat_pstat}
            \bT_n(\X^{\Pi_n}_n) = \frac{n}{n-1}\mathbf{C}_n(\X^{\Pi_n}_n) - \frac{1}{n(n-1)}\sum_{i,j=1}^n\varphi(X^1_i,X^2_j).
        \end{equation}
        It follows that the permutation tests based on $\bC_n(\X_n)$ and $\bT_n(\X_n)$ are exactly the same. In the sequel, we focus on this new formulation, use \cite{kim} and prove Lemma~\ref{lem:critgen}. The computations are quite technical and postponed to the appendix.%
        
        \subsection{Proof of Theorem~\ref{theo:jit}}
            This result follows from Lemma~\ref{lem:critgen} and its proof consists of computing the means and variances that appear in this lemma under Model~\ref{mod:jittering}. Notice that having an upper bound of the variances is enough. The computations use the following intermediate result that can be useful (and even generalized) in other situations if needed.
            \begin{lemma}\label{lem:Poisson}
                Let us consider:
                \begin{itemize}
                    \item A Poisson process $N$ on $\R$ with intensity $\eta^1>0$. 
                    
                    \item An i.i.d family $(\xi_k)_{k\in\mathbb{Z}}$ of real valued random variables that is independent of $N$.
                    
                    \item  For any $ l \geq 2$ the set of vectors with different coordinates
                    \begin{equation}\label{eq:Deltaell}
                        \Delta_{l} = \left\{ (s_1,\cdots,s_{l})\in\R^{l}: s_i\neq s_j, \forall i\neq j \right\}.
                    \end{equation}
                \end{itemize}
                Then for any deterministic and measurable function $F:\R^n\times\R^{l}\to\R$ such that
                \[ \E\left[ \int_{\R^{l}} \big|F(s_1,\cdots,s_{l}, \xi_{1},\cdots,\xi_{l})\big| \dd s_1\cdots \dd s_{l} \right] < \infty, \]
                we have
                 \begin{multline}\label{eq:stoch_int}
                    \E\left[ \int_{\Delta_{l}}F\left(s_1,\cdots, s_{l}, \xi_{N_{s_1}},\cdots,\xi_{N_{s_{l}}}\right)\dd N_{s_1}\cdots \dd N_{s_{\ell}} \right]  \\
                    = \left(\eta^1\right)^{l}\E\left[ \int_{\R^{\ell}} F(s_1,\cdots,s_{l}, \xi_{1},\cdots ,\xi_{l}) \dd s_1\cdots \dd s_{l} \right] . 
                \end{multline}
            \end{lemma}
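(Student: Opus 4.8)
The plan is to recognize the left-hand side as the expectation of a sum over ordered $l$-tuples of \emph{distinct} points of $N$, each carrying an independent mark, and to evaluate it by first integrating out the marks and then invoking the factorial moment (Campbell--Mecke) formula for the Poisson process $N$. Enumerate the points of $N$ as $(u_k)_{k\in\mathbb{Z}}$, indexed so that $\xi_{N_{u_k}}=\xi_k$. Since on $\Delta_{l}$ the arguments $s_1,\dots,s_l$ are forced to be pairwise distinct and the points of $N$ are almost surely distinct, the stochastic integral against $\dd N_{s_1}\cdots \dd N_{s_l}$ is exactly the sum over $l$-tuples of pairwise distinct indices:
\[
\int_{\Delta_{l}} F(s_1,\dots,s_l,\xi_{N_{s_1}},\dots,\xi_{N_{s_l}})\,\dd N_{s_1}\cdots \dd N_{s_l}
= \sum_{\substack{k_1,\dots,k_l\in\mathbb{Z}\\ \text{pairwise distinct}}} F(u_{k_1},\dots,u_{k_l},\xi_{k_1},\dots,\xi_{k_l}).
\]
This is the step where distinctness is crucial: distinct positions correspond to distinct indices, hence to \emph{independent} marks $\xi_{k_1},\dots,\xi_{k_l}$, a fact that would fail on the diagonal.

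Next I would condition on $N$. Using that $(\xi_k)$ is i.i.d.\ with common law $\mu$ and independent of $N$, the distinctness of the indices lets me integrate the marks out factor by factor. Setting
\[
\bar F(s_1,\dots,s_l) := \E\big[F(s_1,\dots,s_l,\xi_1,\dots,\xi_l)\big]
= \int_{\R^l} F(s_1,\dots,s_l,x_1,\dots,x_l)\,\mu(\dd x_1)\cdots\mu(\dd x_l),
\]
I would obtain
\[
\E\Big[\,\sum_{\text{distinct}} F(u_{k_1},\dots,u_{k_l},\xi_{k_1},\dots,\xi_{k_l})\;\Big|\;N\,\Big]
= \sum_{\substack{k_1,\dots,k_l\\ \text{distinct}}} \bar F(u_{k_1},\dots,u_{k_l}).
\]
Taking expectations, it would remain to evaluate the expected sum of $\bar F$ over distinct $l$-tuples of points of $N$.

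The final step is the $l$-th order factorial moment formula for a Poisson process of intensity $\eta^1$ on $\R$: its $l$-th factorial moment measure is $(\eta^1)^l$ times Lebesgue measure on $\R^l$, so that
\[
\E\Big[\,\sum_{\substack{k_1,\dots,k_l\\ \text{distinct}}} \bar F(u_{k_1},\dots,u_{k_l})\,\Big]
= (\eta^1)^l \int_{\R^l} \bar F(s_1,\dots,s_l)\,\dd s_1\cdots \dd s_l.
\]
Substituting the definition of $\bar F$ and applying Fubini's theorem then yields $(\eta^1)^l\,\E\big[\int_{\R^l} F(s_1,\dots,s_l,\xi_1,\dots,\xi_l)\,\dd s_1\cdots \dd s_l\big]$, which is the claimed right-hand side. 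The integrability hypothesis on $|F|$ is used throughout: running the same chain of identities with $|F|$ in place of $F$ (by Tonelli) shows every quantity is finite, which both legitimates the conditioning and Fubini manipulations and reduces the signed case to its positive and negative parts.

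I expect the only delicate point to be the bookkeeping at the first step: one must check that the exclusion of the diagonal encoded by $\Delta_{l}$ translates precisely into summation over distinct indices, so that the independent-marks argument applies, and that the relevant Poisson moment measure is the \emph{factorial} one rather than the ordinary $l$-th moment measure, whose extra diagonal contributions would spoil the clean $(\eta^1)^l$ factor.
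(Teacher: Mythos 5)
Your proof is correct, and it takes a genuinely different route from the paper's. You first convert the off-diagonal multiple stochastic integral into a sum over pairwise distinct $l$-tuples of points of $N$, integrate out the i.i.d.\ marks by conditioning on $N$ (which is legitimate precisely because distinct points carry marks with distinct indices, hence independent ones), and then apply the $l$-th \emph{factorial} moment measure formula for the homogeneous Poisson process, which is $(\eta^1)^l\dd s_1\cdots\dd s_l$. The paper instead partitions $\Delta_l$ into the ordered simplices $\Delta_l^{\pi}=\{s_{\pi(1)}<\cdots<s_{\pi(l)}\}$ over permutations $\pi\in\eS_l$, writes the integral as an iterated stochastic integral, and evaluates it from the innermost integral outward by successive conditioning, using at each stage the independence of $(\xi_k)$ from $N$ and the compensator $\eta^1\dd s$ of the Poisson process. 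Your approach avoids the permutation bookkeeping entirely and handles distinctness in one stroke via the factorial moment measure, at the cost of invoking that (standard) piece of point-process machinery; the paper's approach is more elementary in that it only uses the compensator of $N$ and predictable integrands, which is also why it must order the points first. Your attention to the two delicate points --- that $\Delta_l$ must translate into distinct indices (so that the marks are independent) and that the factorial rather than the ordinary moment measure is the right object --- is exactly where the argument could otherwise go wrong, and your Tonelli argument with $\lvert F\rvert$ correctly justifies the interchanges.
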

     We give a short proof of this lemma in the appendix section~\ref{app:probabilisticresults}.

            \begin{remark}
                The counting process \((N_t)\) has only jumps of size \(1\), i.e. \(N_{s} = N_{s-} + 1\) for any jump time \(s\).
                In \eqref{eq:stoch_int}, we should replace \(\xi_{N_{s_1}}\) by \(\xi_{N_{s_1-}+1}\) to avoid any doubt of measurability in the stochastic integral.
                However, we decide to keep this form to alleviate the notation.
            \end{remark}

            Using this lemma, we split the proof of Theorem~\ref{theo:jit} in the following steps:
            \begin{itemize}
                \item[\textbf{Step 1:}] We start with the proof of \eqref{eq:mean_jit}. It directly follows from the equality
                \begin{equation}\label{eq:mean_sjit}
                    \Eo\left[ \varphi(X^1_1,X^2_1) \right] - \Ez\left[ \varphi(X^1_1,X^2_1) \right] = \Eo\left[ \int_0^T\ind_{|\xi_{Z^1_1(u)}|\leq\delta}\ind_{u+\xi_{Z^1_1(u)}\in[0,T]}\dd Z^1_1 (u) \right] ,
                \end{equation}
                where $Z^1_1$ is the Poisson process which links the spike trains $X_1^1$ and $X_1^2$ 
                (see Model~\ref{mod:jittering}). Eq. \eqref{eq:mean_sjit} is obtained by distinguishing the jump times of both spike-trains that come 
                from the same underlying jump time. It is then easy to verify that the right hand term in \eqref{eq:mean_sjit} is equal to the one of 
                \eqref{eq:mean_jit} (see Appendix~\ref{subsectionB1}). In particular, we deduce the lower bound
                \begin{equation}\label{eq:lower_bound_jit}
                    \Eo\left[ \varphi(X^1_1,X^2_1) \right] - \Ez\left[ \varphi(X^1_1,X^2_1) \right] \geq 
                    \eta^1 (T - \delta)\eP(|\xi|\leq\delta) \geq
                    \frac{\eta^1 T}{2}\eP(|\xi|\leq\delta),
                \end{equation}
                since $ \delta \le T/2.$
                
                \item[\textbf{Step 2:}] Setting $\lamb = \max\{\lambda^1,\lambda^2\}$ and assuming that $\eta^1\leq\lamb$, we show that there exists a constant $C>0$ independent of the parameters such that the variances of $\fp(X_1,X_2)$ under the observed and the independent model satisfy the bounds
                \begin{equation}\label{eq:var_jit_ind}
                    \Vz\big[ \fp(X_1,X_2) \big] \leq C(\lamb)^2\delta T\big( 1 + \lamb\delta \big)
                \end{equation}
                and
                \begin{equation}\label{eq:var_jit}
                    \Vo\big[ \fp(X_1,X_2) \big] \leq C\big( 1+\lamb\delta \big)\big( (\lamb)^2\delta T + \eta^1 T\eP(|\xi|\leq\delta) \big).
                \end{equation}
                
            \end{itemize}
            We conclude the proof by replacing the mean, variances and the second order moment of $\fp(X_1,X_2)$ in the inequality \eqref{eq:20230127_1} of Lemma~\ref{lem:critgen} by their bounds in \eqref{eq:lower_bound_jit}, \eqref{eq:var_jit_ind} and \eqref{eq:var_jit}. The resulting inequality is solved according to the unknown $n$ in order to get the lower bound \eqref{eq:min_size_jit} on the number of observations which ensures the control of the {\color{black}error of second kind}.

        \subsection{Proof of Theorem~\ref{theo:Hawkes}}\label{sec:theo_Hawkes}
            As in the case of the jittering injection Poisson model, this result follows from Lemma~\ref{lem:critgen}, and its proof 
            consists of computing the expectations and variances of $\fp(X_1,X_2)$ for the observed and independent Hawkes models. 
            These computations use the cumulant measures of the univariate Hawkes process $N$ with intensity $\lambda(t)$ defined 
            by \eqref{eq:intensity} since this process allows to construct the entire network. 
            We start by giving some important {\color{black}information}
            about cumulant measures and their computation which are mainly based on \cite{rotter,priv}. 
          
            \subsubsection*{About the cumulant measures of a univariate Hawkes process}
            Let $N$ be a univariate Hawkes process with intensity
            \begin{equation}\label{eq:rate_Hawkes}
             \lambda(t) = \mu + \int_{(-\infty,t)}h(t-s)\dd N_s
            \end{equation}
            where the interaction function $h:\R_+\to\R_+$ is such that $\|h\|_{L^1}<1$. Let $l \geq 1$ be an integer, then (see \cite{rotter,priv},
             {\color{black}or \cite[Chap. 5]{DVJ2003} for a general presentation}) 
            the $l$-order cumulant measure of $N$ is the non negative measure defined by 
            \begin{equation}\label{eq:cumrec}
                k_{l}(\dd t_1,\cdots ,\dd t_{l}) = \sum_{\mathbf{p}}(|\mathbf{p}|-1)!(-1)^{|\mathbf{p}|-1}\bigotimes_{B\in\mathbf{p}}\E\big[\bigotimes_{i\in B}\dd N_{t_i}\big]
            \end{equation}
            where the sum goes over all partitions $\mathbf{p}$ of the set $\{1,\cdots ,l\}$, $|\mathbf{p}|$ corresponds to the number of elements of this partition and the notation $\E\big[\dd N_{t_1}\cdots \dd N_{t_n}\big]$ refers to the positive measure on $\R^n$ defined 
            for all measurable test  functions $ \phi $ having compact support by
            \[ 
            \int\phi(t_1,\cdots ,t_{n})\E\big[\dd N_{t_1}\cdots \dd N_{t_{n}}\big] = \E\bigg\{ \int\phi(t_1,\cdots ,t_{n})\dd N_{t_1}\cdots \dd N_{t_{n}} \bigg\}. 
            \]

            In the sequel we will mainly use the first and second order cumulants. The first order cumulant is the mean measure $k_1(\dd t_1) = \E[\dd N_{t_1}]$ and the second order cumulant is the covariance
            \begin{align*}
                k_2(\dd t_1,\dd t_2) = \E[\dd N_{t_1}\dd N_{t_2}] - \E[\dd N_{t_1}]\E[\dd N_{t_2}].
            \end{align*}
            In particular, the second order cumulant is equivalent to the first order cumulant on the diagonal $\{t_1=t_2\}$, that is
            \[ \ind_{t_1=t_2}k_2(\dd t_1,\dd t_2) = \delta_{t_1}(\dd t_2)k_1(\dd t_1). \]

            One well known property of the cumulant measure is the dual formula
            \begin{equation}\label{eq:dualformula}
                \E\big[\dd N_{t_1}\cdots \dd N_{t_{l}}\big] = \sum_{\mathbf{p}}\bigotimes_{B\in\mathbf{p}}k_{\# B}(\dd t_{i^B_1}, \cdots, \dd t_{i^B_{\# B}})
            \end{equation}
            where the notation $\# B$ refers to the cardinal of the subset $B$ and where we suppose that 
            $B=\{i^B_1,\cdots ,i^B_{\# B}\}$. Thus the positive measure $\E\big[\dd N_{t_1}\cdots \dd N_{t_{l}}\big]$ is entirely determined by the knowledge of the cumulant measures up to order $l$. An efficient way to compute these cumulant measures has been proposed by Jovanovi\'c et al. \cite{rotter} and consists in using the cluster representation of the Hawkes process (see \cite{Graham2021, HawkesOakes1974, Rasmussen2013}). Indeed, the Hawkes process $N$ can be constructed as {\color{black}a branching point process} as follows:
            \begin{enumerate}
                \item Simulate a realization of an homogeneous Poisson process with rate $\mu$ on $(-\infty,T]$. We call each point of this process an \emph{immigrant}. {\color{black}The immigrant process is denoted $\mathbf{I}$.}
                
                \item Each immigrant $t_0$ generates a cluster $C_{t_0}$. These clusters are mutually independent and each of them is constructed recursively:
                \begin{enumerate}
                    \item {\color{black} The generation 0 consists of the ancestor $t_0$, which implies that $t_0\in C_{t_0}$.}
                    \item The first generation is an inhomogeneous Poisson process on $[t_0,T]$ with rate $r_1(t) = h(t-t_0), t \geq t_0$.
                    \item Knowing the offspring generations $1,\cdots ,n$, every child $s$ of generation $n$ generates, {\color{black}independently of everything else,} its own offspring of generation $n+1$ as a realization of an inhomogeneous Poisson process on the time interval $[s,T]$ with rate $r_{n+1}(t) = h(t-s), t \geq s $.
                \end{enumerate}
            \end{enumerate}
            The Hawkes process $N$ corresponds to the superposition of all such clusters {\color{black}and can thus be seen as a branching point process}. {\color{black}Since $\|h\|_{L^1}<1$, all clusters are a.s. finite.}
            {\color{black} We prove in the next Theorem that the}
            cumulant measure $k_{l}(\dd t_1,...,\dd t_{l})$ expresses situations where  $t_1, \cdots, t_{l}$ 
            are indices of points belonging to 
            the same cluster. {\color{black}This result follows the idea of Jovanovic et al. \cite{rotter}.}

            {\color{black}In the sequel, we denote by $\mathbb{P}^x$ the distribution of the cluster $C_x$ issued from an immigrant at time $x$, for a given $x$. Note that by %
            the branching property, the law of all the descendants of a given point $u$ in a cluster $C_x$ %
            is also given by %
            $\mathbb{P}^u$. 
            More generally we write $\mathbb{P}^{x_1,...,x_k}$ for the distribution of the clusters issued from $x_1, \ldots , x_k,$ and $\mathbb{E}^{x_1,...,x_k}$ for the corresponding expectation. If the $x_i$'s are all different, then $\mathbb{P}^{x_1,...,x_k}=\bigotimes_{i=1,...,k}\mathbb{P}^{x_i}.$}
            
            {\color{black}
            	\begin{theorem}\label{theo:cluster_cumulant}
            		Let \( N \) denote a Hawkes process obtained by {\color{black}the above construction.}%
            		We have the identity:
            		\begin{equation}\label{eq:theo_clusterunique}
            			\mathbb{E}\left[\sum_{x\in \mathbf{I}} 
            			\sum_{t_1,  \cdots, t_l  \in C_{x}} \phi(t_1,\cdots,t_l)\right] = \int_{\mathbb{R}^l} \phi(t_1,\cdots,t_l)   k_l(\dd t_1, \cdots, \dd t_l)
            		\end{equation}
            		for any test function \(\phi\) {\color{black}having compact support}.
            	\end{theorem}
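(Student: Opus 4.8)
The plan is to combine the branching (cluster) representation $N=\sum_{x\in\mathbf{I}}C_x$ with the moment--cumulant correspondence, viewing $k_l$ as the $l$-th derivative at the origin of the cumulant generating functional and then reading off the cluster structure. Concretely, for a bounded compactly supported $\theta$ set $G_N[\theta]=\E\big[\exp\big(\sum_{t\in N}\theta(t)\big)\big]$; by the definition of $\E[\dd N_{t_1}\cdots\dd N_{t_l}]$ recalled above, the moment measures are the $l$-th functional derivatives of $G_N$ at $\theta=0$, while the partition formula \eqref{eq:cumrec} defining $k_l$ is exactly the Möbius inversion identifying $k_l$ with the $l$-th functional derivatives of $\log G_N$ at $\theta=0$. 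To stay rigorous and dodge any question of exponential moments, I would test all measures against tensor products of indicators $\ind_{A_1},\dots,\ind_{A_m}$ of disjoint bounded Borel sets and argue at the level of the random vector $(N(A_1),\dots,N(A_m))$, whose joint moments and joint cumulants are precisely the integrated moment and cumulant measures; these are finite because subcriticality $\|h\|_{L^1}<1$ (i.e. $\ell<1$) makes the total progeny of a cluster have finite moments of every order. The measure identity \eqref{eq:theo_clusterunique} then follows from the identity for all such $A_k$ by a monotone-class argument, $\phi$ being compactly supported.

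Next I would insert the cluster decomposition. Since $\mathbf{I}$ is a Poisson process of rate $\mu$ on $(-\infty,T]$ and, given $\mathbf{I}$, the clusters are independent with $C_x\sim\mathbb{P}^{x}$, the process $N$ is a sum of independent marks over a Poisson process, and the compound-Poisson (Campbell/Mecke) identity gives $\log G_N[\theta]=\mu\int_{\R}\big(G_{C_x}[\theta]-1\big)\,\dd x$ with $G_{C_x}[\theta]=\mathbb{E}^{x}\big[\exp\big(\sum_{t\in C_x}\theta(t)\big)\big]$. Differentiating $l$ times at $\theta=0$ --- the constant $-1$ contributing nothing --- yields the key identity
\[
k_l(\dd t_1,\dots,\dd t_l)=\mu\int_{\R}m^x_l(\dd t_1,\dots,\dd t_l)\,\dd x,\qquad m^x_l(\dd t_1,\dots,\dd t_l):=\mathbb{E}^{x}\big[\dd C_x(t_1)\cdots\dd C_x(t_l)\big].
\]
At the level of the vectors $(N(A_k))$ this is the elementary fact that the $l$-th joint cumulant of a compound-Poisson sum equals $\mu$ times the integrated $l$-th joint moment of a single mark.

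Finally I would compute the left-hand side of \eqref{eq:theo_clusterunique} directly: writing $\sum_{t_1,\dots,t_l\in C_x}\phi(t_1,\dots,t_l)=\int_{\R^l}\phi\,\dd C_x(t_1)\cdots\dd C_x(t_l)$ and applying Mecke's formula to the independently marked Poisson process $\mathbf{I}$ gives
\[
\E\Big[\sum_{x\in\mathbf{I}}\sum_{t_1,\dots,t_l\in C_x}\phi(t_1,\dots,t_l)\Big]=\mu\int_{\R}\int_{\R^l}\phi\,m^x_l(\dd t_1,\dots,\dd t_l)\,\dd x=\int_{\R^l}\phi\,k_l(\dd t_1,\dots,\dd t_l),
\]
the last step being the identity of the previous paragraph. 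The main obstacle I anticipate is the rigorous justification of this compound-Poisson step: one must check that $m^x_l$ is locally finite, that $\mu\int_{\R}m^x_l\,\dd x$ converges when tested against the compactly supported $\phi$, and that differentiation and expectation may be interchanged. All of this is governed by subcriticality $\ell<1$, and routing the whole argument through the finite-dimensional vectors $(N(A_k))$, where the compound-Poisson cumulant identity is purely algebraic, removes the analytic subtleties.
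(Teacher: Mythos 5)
Your proof is correct, but it takes a genuinely different route from the paper's. The paper argues by induction on $l$: it decomposes the sum over $l$-tuples of points of $N$ according to the partition induced by which cluster each point falls into, uses the independence of clusters attached to distinct immigrants together with the product form of the intensity of tuples of distinct immigrants, recognizes the lower-order cumulants through the induction hypothesis, and closes the loop with the dual formula \eqref{eq:dualformula}. You instead invoke the classical Poisson-cluster-process identity: since $N$ is a superposition of independent marks over the Poisson immigrant process, $\log G_N[\theta]=\mu\int_{\R}(G_{C_x}[\theta]-1)\,\dd x$, so the $l$-th cumulant measure of $N$ equals $\mu\int_\R m^x_l\,\dd x$ where $m^x_l$ is the $l$-th moment measure of a single cluster; Campbell's formula for the marked immigrant process then identifies this with the left-hand side of \eqref{eq:theo_clusterunique}. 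Your approach is shorter and places the result in its natural context (it is essentially the textbook derivation of cumulant measures for Poisson cluster processes, and the partition combinatorics of \eqref{eq:cumrec} is absorbed once and for all into the exponential), at the cost of the analytic care you correctly flag: finiteness of cluster moments under subcriticality and justification of the term-by-term differentiation, which your reduction to the finite-dimensional vectors $(N(A_1),\dots,N(A_m))$ handles cleanly. The paper's induction is more elementary and self-contained — it never leaves the level of expectations of finite sums and needs no generating functional — and it has the side benefit of exhibiting explicitly the conditioning-on-immigrants mechanism that is reused later in the dendrogram computations.
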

             }
             {\color{black}
            	\begin{proof}
            		Without loss of generality, we {\color{black}only consider} %
              $\phi(t_1,\cdots,t_l)$ {\color{black}of the form} $\phi_1(t_1)\times\cdots\times\phi_l(t_l)$.
            		
            		We prove the result by induction.
            		
            		\noindent	\textbf{Case \(l = 1\)}: by definition of \(k_1\).
            		
            		\noindent	\textbf{Case \(l = 2\)}: We have
            		\begin{eqnarray}\label{eq:20251016_3}
            			\mathbb{E}\left[
            			\sum_{t_1 %
               {\color{black}, } t_2 \in N}  \phi_1(t_1) \phi_2(t_2)\right] & =& 
            			\mathbb{E}\left[ \sum_{x\in \mathbf{I}} \sum_{\substack{t_1 %
               {\color{black}, } t_2 \\ \in C_{x}}} \phi_1(t_1) \phi_2(t_2)\right]
            			+  \mathbb{E}\left[ \sum_{\substack{x_1 \neq x_2 \\ \in \mathbf{I}}} \sum_{\substack{t_1  \in C_{x_1} \\ t_2 \in C_{x_2}}} \phi_1(t_1) \phi_2(t_2)\right] \nonumber \\
               &=:& \mathbb{E}\left[ \sum_{x\in \mathbf{I}} \sum_{\substack{t_1 %
               {\color{black}, } t_2 \\ \in C_{x}}} \phi_1(t_1) \phi_2(t_2)\right] + S . 
               \end{eqnarray}

            		We now evaluate further the expression $S.$ To do so we use {\color{black}that the intensity measure of a pair of different immigrants \(x_1\neq x_2\) is the product $\mu^2 \dd y_1 \dd y_2$.} The conditional independence of different clusters then implies that 
            		\begin{align*}
            			S & =
            			\mathbb{E}\left[ \sum_{\substack{x_1 \neq x_2 \\ \in \mathbf{I}}} \sum_{\substack{t_1  \in C_{x_1} \\ t_2 \in C_{x_2}}} \phi_1(t_1) \phi_2(t_2)\right] 
            			= 
            			\mathbb{E}\left[ \sum_{\substack{x_1 \neq x_2 \\ \in \mathbf{I}}}
            			\mathbb{E}^{x_1, x_2}\left[
            			\sum_{\substack{t_1  \in C_{x_1} \\ t_2 \in C_{x_2}}} \phi_1(t_1) \phi_2(t_2)\right]\right]\\
            			& =  \int_{\mathbb{R}^2} \mathbbm{1}_{y_1\neq y_2}
            			\mathbb{E}^{y_1,y_2}\left[\sum_{\substack{t_1  \in C_{y_1} \\ t_2 \in C_{y_2}}} \phi_1(t_1) \phi_2(t_2)\right]\mu \dd y_1 \,\mu \dd y_2 \\
            			&=  \int_{\mathbb{R}^2} \mathbbm{1}_{y_1\neq y_2}
            			\mathbb{E}^{y_1}\left[ \sum_{t_1  \in C_{y_1}} \phi_1(t_1)\right]
            			\mathbb{E}^{y_2}\left[ \sum_{t_2  \in C_{y_2}} \phi_2(t_2)\right]\mu \dd y_1 \,\mu \dd y_2.
            		\end{align*}
            		Finally, we have
            		\begin{align}
            			\nonumber	S & =  \int_{\mathbb{R}}
            			\mathbb{E}^{y_1}\left[ \sum_{t_1  \in C_{y_1}} \phi_1(t_1)\right]\mu \dd y_1
            			\int_{\mathbb{R}}\mathbb{E}^{y_2}\left[ \sum_{t_2  \in C_{y_2}} \phi_2(t_2)\right] \mu \dd y_2\\
            			\nonumber& = 
            			\mathbb{E}\left[ \sum_{x_1\in\mathbf{I}}\sum_{t_1  \in C_{x_1}} \phi_1(t_1)\right]
            			\mathbb{E}\left[ \sum_{x_2\in\mathbf{I}}\sum_{t_2  \in C_{x_2}} \phi_2(t_2)\right]\\
            			\label{eq:20251016_1}	& = 
            			\mathbb{E}\left[\sum_{t_1  \in N} \phi_1(t_1)\right]
            			\mathbb{E}\left[ \sum_{t_2  \in N} \phi_2(t_2)\right].
            		\end{align}
            		
            		We now use \eqref{eq:cumrec}. For \(l = 2\), there are only two partitions, 
            		\(\mathbf{p}_1 = \{\{t_1,t_2\}\}\) and \(\mathbf{p}_2 = \{\{t_1\}, \{t_2\}\}\).
            		Thus, taking \eqref{eq:20251016_1} into account, 
            		\begin{eqnarray}\label{eq:20251016_2}
            			\int_{\mathbb{R}^2} \phi_1(t_1)\phi_2(t_2) k_2(\dd t_1,\dd t_2) &=&  
            			\E\left[\sum_{t_1 %
               {\color{black}, } t_2 \in N} \phi_1(t_1)\phi_2(t_2)\right] 
            			-  \E\left[\sum_{t_1 \in N} \phi_1(t_1)\right]\E\left[\sum_{t_2\in N } \phi_2(t_2)\right] \nonumber \\
               &=& \E\left[\sum_{t_1 %
               {\color{black}, } t_2 \in N} \phi_1(t_1)\phi_2(t_2)\right] - S.
            		\end{eqnarray}
            		Substituting \eqref{eq:20251016_2} and \eqref{eq:20251016_1} into \eqref{eq:20251016_3}, we obtain
            		\[
            		\mathbb{E}\left[ \sum_{x\in \mathbf{I}} \sum_{\substack{t_1 %
              {\color{black}, } t_2 \\ \in C_{x}}} \phi_1(t_1) \phi_2(t_2)\right]
            		=  \int_\mathbb{R} \phi_1(t_1)\phi_2(t_2) k_2(\dd t_1,\dd t_2),
            		\]
            		that is \eqref{eq:theo_clusterunique} for \(l = 2\).
            		
            		\noindent	\textbf{Case \(l + 1\)}:
            		
            		We assume that the result holds for any cumulant up to order \(l\) and we now prove that it also holds for \(l + 1\).
            		
            		We first decompose the sum over partitions {\color{black}$\mathbf{p}$} of \( \{1, \ldots, l + 1\} \) according to whether the indices within each block {\color{black}$B$} of the partition originate from the same immigrant.
            		We isolate the trivial partition  	\(\mathbf{p}_1 = \{\{t_1, \cdots, t_{l+1}\}\}\). So, 
            		\begin{multline*}
            			\mathbb{E}\left[
            			\sum_{t_1, \cdots,  t_{l+1} \in N}  \phi_1(t_1) \cdots \phi_{l+ 1}(t_{l +1})\right]
            			=
            			\mathbb{E}\left[
            			\sum_{{\color{black}x} %
               \in \mathbf{I}}\sum_{\substack{t_1, \cdots,  t_{l+1} \\  \in C_{{\color{black}x}%
               }}}
            			\phi_1(t_1) \cdots \phi_{l+ 1}(t_{l +1})\right]\\
            			+
            			\mathbb{E}\left[
                        {\color{black}\sum_{\mathbf{p} :  |\mathbf{p}|\geq 2}  \sum_{\substack{(x_B)_{B\in \mathbf{p}}\\ \mbox{\tiny{all diff. in }}  \mathbf{I}}}  \prod_{B\in \mathbf{p}} 
                        \sum_{\substack{ t_{i^B_1}, \cdots, t_{i^B_{\# B}}\\ \mbox{\tiny{in }}  C_{x_B}}} \phi_{i^B_1}(t_{i^B_1})\cdots \phi_{i^B_{\# B}}(t_{i^B_{\# B}})
                        }
                                                \right] ,
                       \end{multline*}
            		where {\color{black}the notation is similar to \eqref{eq:dualformula}. In words, for every possible $(t_1,...,t_{l+1}),$ %
              we can group these points into points coming from the same cluster, leading to a partition $\mathbf{p}$. It remains to sum over all partitions and all possible cluster distributions to obtain the sum over all possible $l+1$-tuples.}

            		Similarly to the proof in the case \(l = 2\), we condition on the immigrants and obtain that
{\color{black}
\begin{align*}
S &: = \mathbb{E}\left[
                        \sum_{\mathbf{p}:  |\mathbf{p}|\geq 2} \sum_{\substack{(x_B)_{B\in \mathbf{p}}\\ \mbox{\tiny{all diff. in }}  \mathbf{I}}} \prod_{B\in \mathbf{p}} 
                        \sum_{\substack{ t_{i^B_1}, \cdots, t_{i^B_{\# B}}\\ \mbox{\tiny{in }}  C_{x_B}}} \phi_{i^B_1}(t_{i^B_1})\cdots \phi_{i^B_{\# B}}(t_{i^B_{\# B}})\right]\\
    &= \mathbb{E}\left[
                        \sum_{\mathbf{p}:  |\mathbf{p}|\geq 2} \sum_{\substack{(x_B)_{B\in \mathbf{p}}\\ \mbox{\tiny{all diff. in }}  \mathbf{I}}} \mathbb{E}^{(x_B)_{B\in \mathbf{p}}}\left\{\prod_{B\in \mathbf{p}} \sum_{\substack{ t_{i^B_1}, \cdots, t_{i^B_{\# B}}\\ \mbox{\tiny{in }}  C_{x_B}}} \phi_{i^B_1}(t_{i^B_1})\cdots \phi_{i^B_{\# B}}(t_{i^B_{\# B}})
                    \right\}
                        \right] .
 \end{align*}     
 Since the $C_{x_B}$'s are independent because their immigrants are different, and since the intensity measure associated with tuples of distinct points of $\mathbf{I}$ is a tensor of $\mu \dd y$, we end up with
 \begin{align*}
S & =\mathbb{E}\left[
                        \sum_{\mathbf{p} : |\mathbf{p}|\geq 2} \sum_{\substack{(x_B)_{B\in \mathbf{p}}\\ \mbox{\tiny{all diff. in }}  \mathbf{I}}} \prod_{B\in \mathbf{p}} \mathbb{E}^{x_B}\left\{\sum_{\substack{ t_{i^B_1}, \cdots, t_{i^B_{\# B}}\\ \mbox{\tiny{in }}  C_{x_B}}} \phi_{i^B_1}(t_{i^B_1})\cdots \phi_{i^B_{\# B}}(t_{i^B_{\# B}})
                    \right\}\right]\\
                    & = \sum_{\mathbf{p} : |\mathbf{p}|\geq 2} \prod_{B\in \mathbf{p}} \int \mathbb{E}^{y_B}\left\{\sum_{\substack{ t_{i^B_1}, \cdots, t_{i^B_{\# B}}\\ \mbox{\tiny{in }}  C_{y_B}}} \phi_{i^B_1}(t_{i^B_1})\cdots \phi_{i^B_{\# B}}(t_{i^B_{\# B}})
                    \right\} \mu \dd y_B \\
                    & =\sum_{\mathbf{p}: |\mathbf{p}|\geq 2} \prod_{B\in \mathbf{p}} \mathbb{E}\left[\sum_{x\in \mathbf{I}}\sum_{\substack{ t_{i^B_1}, \cdots, t_{i^B_{\# B}}\\ \mbox{\tiny{in }}  C_{x}}} \phi_{i^B_1}(t_{i^B_1})\cdots \phi_{i^B_{\# B}}(t_{i^B_{\# B}})\right] .
 \end{align*}    
 Since $|\mathbf{p}|\geq 2$, all $B\in \mathbf{p}$ satisfy $\#B \leq l$, so we can apply the recurrence and recognize the cumulants of order less than $l$. We conclude by using \eqref{eq:dualformula}.
}              
            	\end{proof}

            	\subsubsection*{Links with dendrogram}
            	Thanks to Theorem~\ref{theo:cluster_cumulant}, we see that in order to compute cumulant measures we need to evaluate integrals of the form 
            	\begin{equation}\label{eq:evalcumul}
            		\mathbb{E}\!\left[\sum_{x%
              \in\mathbf{I}}\sum_{t_1,\ldots,t_l \in C_{x%
              }}\phi(t_1,\ldots,t_l)\right],
            	\end{equation}
            	where \(t_1,\ldots,t_l\) denote %
             nodes belonging to the same cluster issued from immigrant \(x\).

            {\color{black}It is easy to see first that cumulants of higher orders with ties,  that is with repetitions, are just cumulants of smaller order. So in what follows we are only interested in computing 
            \begin{equation}\label{eq:evalcumul2}\mathbb{E}\!\left[\sum_{x%
              \in\mathbf{I}}\sum_{\substack{ t_1,\ldots,t_l \\ \mbox{\tiny{all diff. in }} C_x } 
              }\phi(t_1,\ldots,t_l)\right] .
              \end{equation}
            The algorithm proposed by Jovanovic et al. \cite{rotter} is designed to calculate such expressions. In what follows we explain their procedure further and formalize it using dendograms. We start with some easy computations.}
            We first introduce the function \(\Psi\)
\begin{equation}\label{eq:Psi_Hawkes}
	\Psi(t) = \sum_{m\geq 1}h^{\ast m}(t), t \geq 0, 
\end{equation}
where we define recursively for any $ t \geq 0, $
\begin{equation}\left\{
	\begin{array}{l}
		\displaystyle h^{\ast 1}(t) = h(t), 
		\\
		\displaystyle h^{\ast (m+1)}(t) = \int_0^t h^{\ast m}(t-s)h(s)\dd s,  m\geq 1.
	\end{array}\right.
\end{equation}
The function $h^{\ast m}(t)$ is the rate of occurrence of offspring in the $m$-th generation issued from an immigrant at time $0.$ 
As a consequence,  $\Psi(t)$ is the rate of occurrence of offspring of an immigrant at time $0$ whatever their generation. {\color{black}Because of this, and because $v$ belongs to $C_v$, given an ancestor $v$, we have that the mean measure of the cluster process issued from $v$ is 
\begin{equation}\label{meanmeas}
\delta_v(\dd u)+\Psi(u-v) \dd u,
\end{equation}
in the sense that for all functions $\phi$ having compact support,
\[
\mathbb{E}^v\left(\sum_{u\in C_v} \phi(u)\right)= \int \phi(u) (\delta_v(\dd u)+\Psi(u-v) \dd u).
\]
}

{\color{black}Moreover, we say that inside a cluster $C_v$, points belong to different family lines if their only common ancestor is $v.$ This means that they belong in fact to clusters issued from different offspring of $v$ in the first generation. Then is it easy to compute}
\begin{equation}\label{eq:twofamilies}
\mathcal{E}:=\mathbb{E}^v\left(\sum_{\substack{u_1, u_2 \mbox{\tiny{in diff.}} \\ \mbox{\tiny{fam. lines of }} C_v}}\phi_1(u_1)\phi_2(u_2)\right),
\end{equation}
{\color{black}for two functions $\phi_1$ and $\phi_2$ having compact support. 
Indeed, denoting $\mathcal{O}_1$ the first generation of points issued by $v,$ we obtain}
\begin{multline}
    \mathcal{E}=
    \mathbb{E}^v\left( \sum_{w_1 \neq w_2 \in \mathcal{O}_1} \sum_{u_1\in C_{w_1}}\phi_1(u_1)\sum_{u_2\in C_{w_2}}\phi_2(u_2)\right) \\
    = \mathbb{E}^v\left[ \sum_{w_1 \neq w_2 \in \mathcal{O}_1}\int \phi_1(u_1) (\delta_{w_1}(\dd u_1)+\Psi(u_1-w_1) \dd u_1)\int \phi_2(u_2) (\delta_{w_2}(\dd u_2)+\Psi(u_2-w_2) \dd u_2) \right] \\
    = \int \phi_1(u_1)\phi_2(u_2) \Psi(u_1-v)\Psi(u_2-v) \dd u_1 \dd u_2, \label{meanmeas2}
\end{multline}
{\color{black}
because $\mathcal{O}_1$ is Poisson with intensity $h(\cdot -v)$ and 
$\Psi$ is defined by recursive convolutions of $h$ with itself.
}

{\color{black}These two computations show us that in \eqref{eq:evalcumul2}, the shape of the cluster is important only when two distinct family lines carrying different sets of $t_i$'s coalesce at their most common ancestor. This is why Jovanovic et al. \cite{rotter} construct a simplified tree. This simplified tree corresponds in fact to a classical mathematical object in clustering, the dendrogram (see \cite{Murtagh1984}). Let us detail this construction and give an example.
}

Let \( \mathbf{t} = \{t_1, \cdots, t_l\} \) denote a finite set of distinct times {\color{black}of the same cluster}.  
            	To  compute~\eqref{eq:evalcumul2}, we simplify the corresponding genealogical tree by {\color{black}only keeping the successive closest  common ancestors of the $t_i$'s and discarding all the points in the complete genealogical tree that are intermediate within the independent family lines. More precisely, }%
            	the simplification {\color{black}of the complete genealogical tree of the cluster} is performed according to the following steps:
            	\begin{enumerate}
            		\item We retain all nodes associated with the times \(t_i\);
            		\item We remove the descendants of each \(t_i\) that do not contain any other \(t_j\); as a result, most of the \(t_i\) become leaves in the reduced tree;
            		\item We preserve only those internal nodes that are necessary to distinguish the genealogical relationships among the leaves corresponding to the \(t_i\);
            		\item \label{itemstep4} If one of the \(t_i\), say \(t_{i_0}\), has another \(t_j\) among its descendants, we introduce an additional internal node artificially, say \(u_0\). We replace \(t_{i_0}\) by \(u_0\) in the tree and
                    \(t_{i_0}\) is now viewed as direct descendant of \(u_0\). Thus, \(t_{i_0}\) is now a leaf;
            		\item \label{itemstep5} We also retain the immigrant node. If the immigrant coincides with the most recent common ancestor of the set \(\mathbf{t}\), we introduce an additional artificial internal node in this case as well.
            	\end{enumerate}

{\color{black}We will discuss the role of the additional nodes that may be added according to steps \eqref{itemstep4} or \eqref{itemstep5} in Remark \ref{rem:additional} below.} 
              
            Before proceeding further, let us illustrate this reduction procedure with a concrete example.
            	Figure~\ref{fig:bigtree} displays a typical cluster. This cluster is almost surely finite under the assumption \(\lVert h\rVert < 1\).
            	The nodes \(t_i\) involved in~\eqref{eq:evalcumul2} are highlighted in red.
            	\begin{figure}[ht]
            		\centering
            		\begin{tikzpicture}[
            			x=18mm, y=4mm,
            			every node/.style={rectangle,draw,rounded corners,inner sep=2pt,font=\small},
            			internal/.style={fill=blue!15},
            			leaf/.style={fill=gray!10},
            			maked/.style={fill=red!15},
            			edge/.style={-}
            			]

            			\def\Tmin{0}
            			\def\Tmax{24}    %
            			\def\AxisX{-1.2}
            			
            			\draw[thick] (\AxisX,-\Tmin) -- (\AxisX,-\Tmax);
            			\foreach \tick in {\Tmin,...,\Tmax}{%
            				\draw (\AxisX-0.15,-\tick) -- (\AxisX+0.15,-\tick);
            				\node[anchor=east,font=\scriptsize] at (\AxisX-0.2,-\tick) {\tick};
            			}

            			\node[internal] (x) at (0,-1) {$u$};

            			\node[internal] (u) at (1,-4) {$v$};
            			\node[internal] (y) at (1,-7) {$y$};
            			\node[leaf]     (z) at (1,-12) {$z$};
            			\draw (x) -- (u);
            			\draw (x) -- (y);
            			\draw (x) -- (z);

            			\node[maked] (v)  at (2,-6)  {$t_3$};
            			\node[leaf]     (t4) at (2,-10) {$e_4$};
            			\node[internal] (w)  at (2,-14) {$w$};
            			\draw (u) -- (v);
            			\draw (u) -- (t4);
            			\draw (u) -- (w);

            			\node[maked]     (t3) at (3,-8)  {$t_6$};
            			\node[internal] (q)  at (3,-11) {$q$};
            			\draw (v) -- (t3);
            			\draw (v) -- (q);

            			\node[leaf] (r1) at (4,-13) {$r_1$};
            			\node[internal] (r2) at (4,-15) {$r_2$};
            			\node[leaf] (r3) at (4,-18) {$r_3$};
            			\draw (q) -- (r1);
            			\draw (q) -- (r2);
            			\draw (q) -- (r3);

            			\node[leaf] (t1) at (3,-16) {$d_1$};
            			\node[internal] (t2) at (3,-19) {$d_2$};
            			\draw (w) -- (t1);
            			\draw (w) -- (t2);

            			\node[maked] (s1) at (2,-8)  {$t_1$};
            			\node[leaf] (s2) at (2,-9)  {$s_2$};
            			\node[maked] (s3) at (2,-11) {$t_2$};
            			\node[leaf] (s4) at (2,-13) {$s_4$};
            			\draw (y) -- (s1);
            			\draw (y) -- (s2);
            			\draw (y) -- (s3);
            			\draw (y) -- (s4);

            			\node[maked] (a1) at (5,-17) {$t_4$};
            			\node[leaf] (a2) at (5,-23.2) {$a_2$};
            			\draw (r2) -- (a1);
            			\draw (r2) -- (a2);
            			
            			\node[internal] (b1) at (4,-20) {$b_1$};
            			\node[leaf] (b2) at (4,-21.5) {$b_2$};
            			\node[leaf] (b3) at (4,-23) {$b_3$};
            			\node[leaf] (c4) at (5,-20.5) {$c_4$};
            			\node[maked] (c5) at (5,-22) {$t_5$};
            			
            			\draw (t2) -- (b1);
            			\draw (t2) -- (b2);
            			\draw (t2) -- (b3);
            			\draw (b1) -- (c4);
            			\draw (b1) -- (c5);
            			\node[leaf] (c1) at (3,-12.5) {$c_1$};
            			\draw (s3) -- (c1);
            		\end{tikzpicture}
            		\caption{A typical cluster: {\color{black}on the vertical axis, the time of arrivals of the successive points inside the cluster. In this example $u$ is an immigrant.}}
            		\label{fig:bigtree}
            	\end{figure}
            	In Figure~\ref{fig:reduceddendro}, we apply the announced procedure. Note that here we added the artificial node $\bar{w}$ 
             (step \eqref{itemstep4}) and %
             added $x$ (step \eqref{itemstep5}). We stress that the levels in this {\color{black}resulting} dendrogram do not correspond to the generations in the original cluster.
            	\begin{figure}[ht]
            		\centering
            		\begin{tikzpicture}[level distance=10mm,every node/.style={{fill=blue!25},rectangle,draw},level 1/.style={sibling distance=20mm,nodes={fill=blue!1}},level 2/.style={sibling distance=25mm,nodes=},level 3/.style={sibling distance=10mm,nodes=},level 4/.style={sibling distance=10mm,nodes=}, 		maked/.style={fill=red!15},]
            			\node {\(x\)}
            			child {node {\(u\)}
            			child {node {\(v\)}
            				child {node {\(\bar{w}\)}
            					child {node [maked]{\(t_3\)}}
            					child {node [maked]{\(t_4\)}}
            					child {node [maked]{\(t_6\)}}}
            				child{node [maked]{\(t_5\)}}}
            			child {node {\(y\)}
            				child {node [maked]{\(t_1\)}}
            				child {node [maked]{\(t_2\)}}}
            			};
            		\end{tikzpicture} 
            		\caption{Reduced dendrogram associated to Figure~\ref{fig:bigtree}. We have added \(\bar{w}\) in order to force \(t_3\) to be a leaf, and \(x\).}
            		\label{fig:reduceddendro}
            	\end{figure}

Let us come back to the discussion of the general reduction procedure according to steps (1)-(5) described above. The result of this procedure can be interpreted %
as a dendrogram \cite{Murtagh1984} since each intermediate node can at the same time be seen as a grouping of the points below in the simplified tree as well as the common ancestor of these points.

              We denote by $\mathbb{Q}(t_1,\cdots,t_{l})$ the set of dendrograms with terminal nodes $t_1,\cdots,t_{l}$.

              {\color{black}
Given a dendrogram $\q\in\mathbb{Q}(t_1,\cdots,t_{l})$, for any  non-terminal node $v$ of $\q$ we introduce the following notations: 
\begin{itemize}
	\item $\ch_{\q}(v)$ denotes the set of children of $v$ in $\q$,
	\item $\fe_{\q}(v)$ denotes the set of terminal nodes in $\ch_{\q}(v)$.
\end{itemize}

To use the dendrogram representation and perform calculations similar to \eqref{meanmeas2} above, we have to assign %
measure valued weights to the non-terminal nodes and the roots of the dendrograms in $\mathbb{Q}(t_1,\cdots,t_{l})$. 
For each node {\color{black}$v$,} this weight corresponds to the {\color{black}mean measure of the family of children of $v$ in the dendrogram, conditionally on $v. $ Here, the presence of an edge from a parent to its children (where the children are supposed to be different from the parent) in the dendrogram means that the children belong  to independent family lines issued from the parent. Exceptions are possible at the root and at the terminal nodes where equality between the parent and one of the children is possible. } %

{\color{black}This leads to the definition of the following weights.}
\begin{itemize}
	\item The root $x$ {\color{black}(an immigrant)} is weighted by %
 the measure
	\[
	\Lambda[x](\dd u) = \delta_x(\dd u) + \Psi(u - x)\dd u
	\]
 (see \eqref{meanmeas}).
	The Dirac part \(\delta_x\) of this measure corresponds to the case when the immigrant in the primary cluster is the most recent 
	common ancestor of the  \(t_1, \cdots, t_l\) {\color{black}(in this case, we have added an artificial node)}.%
	\item 	A typical internal node $v$ is weighted by the measure
	\begin{multline}\label{eq:internalweight}
		\Lambda[v](\dd u_1^v,\cdots,\dd u_{j_v}^v) = \Psi(u_1^v - v)\cdots\Psi(u_{j_v}^v-v)\dd u_1^v\cdots \dd u_{j_v}^v \\
		+ \mathbbm{1}_{\{\min u_i^v \in\fe_{\q}(v)\}}\delta_v(\dd (\min u_i^v))\prod_{u_l^v\neq \min u_i^v}[\Psi(u_l^v - v) \dd u_l^v]
  	\end{multline}
   where \(\ch_{\q}(v) = \{u_1^v,\cdots,u^v_{j_v}\}\).

 {\color{black}Note that the first term corresponds exactly to the computation made in \eqref{meanmeas2} with $j_v$ independent family lines. The Dirac measure in the second term is there to allow the possibility for the minimal terminal node to be the common ancestor of all other terminal nodes (in this case, $v$ is an artificial node).}
  \item Once we have integrated over all the measures corresponding to the internal nodes of the tree, we need to integrate with respect to $\mu dx$, to integrate over the immigrants.
\end{itemize}

\begin{remark}\label{rem:additional} Let us now explain the role of the artificial nodes in our procedure. %
If no artificial node were ever introduced in the procedure, each edge in the simplified tree would correspond to an independent family line. In this case, once we know the shape of the tree, we know how to condition recursively on the closest common ancestors to compute the corresponding mean measures as e.g. in \eqref{eq:twofamilies} and \eqref{meanmeas2} above.%
             
            The introduction of an artificial node that may happen at step \eqref{itemstep4} is there to have a strict equivalence with dendrograms. The fact that one of the leafs \(t_i\) can be an ancestor of the other ones is taken into account by the addition of the Dirac mass  in \eqref{eq:internalweight}. The last  node  at step \eqref{itemstep5} (which is not there in the classical definition of dendrogram) is there to  address the integration with respect to the immigrant. If this immigrant turns out to be the closest common ancestor, we have an artificial node and  an extra Dirac mass. Except at the root and the \(t_i\), we only envision independent family lines and therefore, there are no further Dirac masses in the center of the dendrogram.
            \end{remark}
            
            }

    {\color{black}Putting these things together allows us to state the following general result which is a formalization of the result by Jovanovi\'c et al. \cite{rotter}.%
    }
     {\color{black}}  	
            	\begin{theorem}
            		The $l$-th order cumulant measure $k_{l}(\dd t_1,\cdots,\dd t_{l})$ of the Hawkes process $N$ can be represented as follows. 
            		\begin{enumerate}
            			\item  
            			\begin{equation}\label{eq:order-1}
            				k_1(\dd t_1) = \frac{\mu}{1-\|h\|_{L^1}}\dd t_1.
            			\end{equation}
            			
            			\item On the diagonals: if $1\leq i,j\leq l$, then
            			\begin{equation}
            				\ind_{t_i=t_j}k_{l}(\dd t_1,\cdots,\dd t_{l}) = \delta_{t_i}\left(\dd t_j\right)k_{l-1}\left(\dd t_1,\cdots,\dd t_{j-1},\dd t_{j+1},\cdots,\dd t_{l}\right).
            			\end{equation}

            			\item Out of the diagonals: %
               setting 
            			\begin{equation}\label{eq:defdeltal}
            				\Delta_{l} = \{(t_1,\cdots,t_{l})\in\R^{l}: t_i\neq t_j, \forall i\neq j\},
            			\end{equation}
            			we have
            			\begin{multline}
            				\ind_{\Delta_{l}}k_{l}\left(\dd t_1,\cdots ,\dd t_{l}\right) 
            				\\
            				=\sum_{\q\in\mathbb{Q}(t_1, \cdots, t_{l})}\int_{\R^{1+p}} \mu \dd x
            				\Lambda[x](\dd v_1)  \Lambda[v_1](\dd u^{v_1}_1, \cdots, \dd u^{v_1}_{j_1})  \cdots  \Lambda[v_p](\dd u^{v_p}_1, \cdots, \dd u^{v_p}_{j_p}),
            			\end{multline}
            			where $p = p(\q)$ is the number of non-terminal nodes of $\q$, denoted $v_1,\cdots,v_p$.
            			In the last integral, the integration is to be understood with respect to \(x\) and \(v_1, \ldots, v_p\).
            			Note that all of the \(u^{v_i}_j\) may coincide with a variable \(v_l \neq v_i\).
            			Moreover, all \(t_i\) are also contained in the set \(\{u^{v_i}_j\}\). 
            		\end{enumerate}
            		
            	\end{theorem}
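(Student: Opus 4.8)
The plan is to treat the three parts separately, the off-diagonal dendrogram formula in part (3) being the substantial one, while parts (1) and (2) are short consequences of the cluster representation of Theorem~\ref{theo:cluster_cumulant} together with the simplicity of $N$.

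For part (1), I would apply Theorem~\ref{theo:cluster_cumulant} with $l=1$, so that $k_1(\dd t_1)=\E[\sum_{x\in\mathbf{I}}\sum_{t_1\in C_x}\ind_{\dd t_1}]$. Conditioning on the immigrants and using the mean-measure identity \eqref{meanmeas}, an immigrant at $x$ contributes $\delta_x(\dd t_1)+\Psi(t_1-x)\dd t_1$; integrating against the immigrant intensity $\mu\,\dd x$ and using $\int_0^\infty \Psi = \|h\|_{L^1}/(1-\|h\|_{L^1})$, which follows from $\|h^{\ast m}\|_{L^1}=\|h\|_{L^1}^m$ and the definition \eqref{eq:Psi_Hawkes} of $\Psi$, produces the factor $\mu(1+\|h\|_{L^1}/(1-\|h\|_{L^1}))=\mu/(1-\|h\|_{L^1})$. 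Part (2) is the diagonal reduction: since $N$ is simple, the cluster sum of Theorem~\ref{theo:cluster_cumulant} runs over distinct points, so restricting to $\{t_i=t_j\}$ merges the $i$-th and $j$-th summation indices and turns the $l$-fold cluster sum into an $(l-1)$-fold one, which is precisely $\delta_{t_i}(\dd t_j)$ times the cluster representation of $k_{l-1}$. This formalizes the remark, made just before \eqref{eq:evalcumul2}, that cumulants with ties reduce to lower-order cumulants.

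For part (3), I would start from Theorem~\ref{theo:cluster_cumulant} restricted to $\Delta_l$, that is from the expected same-cluster sum \eqref{eq:evalcumul2} over tuples of pairwise distinct points. The core idea is to classify each such tuple by the reduced dendrogram $\q\in\mathbb{Q}(t_1,\dots,t_l)$ produced by the reduction procedure (1)--(5), and to sum over the finitely many shapes. For a fixed $\q$ I would evaluate the contribution by processing $\q$ from the root downward and exploiting the branching property: conditionally on a non-terminal node $v$, its children $\ch_{\q}(v)$ lie on independent family lines issued from $v$, so the sub-cluster contributions factorize exactly as in the two-family computation \eqref{meanmeas2}, but now with $j_v$ factors, producing the first (product-of-$\Psi$) term of the weight $\Lambda[v]$ in \eqref{eq:internalweight}. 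To make this a genuine recursion I would prove a rooted version of the statement, indexed by an arbitrary point $v$ and a set of distinct marked descendants whose most recent common ancestor is $v$, the contribution being $\mathbb{E}^{v}$ of the corresponding sub-cluster sum; this is needed because the dendrogram-children of the top common ancestor $v_1$ are themselves roots of sub-problems over strict subsets of the $t_i$, to which the induction hypothesis on $l$ (equivalently on the depth of $\q$) applies. The outermost immigrant integration $\mu\,\dd x$ and the weight $\Lambda[x]$, carrying the Dirac part $\delta_x$ for the case where the immigrant is itself the most recent common ancestor, then reattach the rooted sub-computation to obtain the claimed expression.

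The Dirac masses are where the care is needed and constitute the main obstacle. The second term of \eqref{eq:internalweight} encodes the configurations in which the most recent common ancestor of a sub-family coincides with its earliest marked point rather than being a strictly earlier point, and the artificial nodes introduced at steps \eqref{itemstep4} and \eqref{itemstep5} are exactly what turn the correspondence between distinct-point configurations and pairs (dendrogram shape, internal-node positions) into a genuine bijection, so that summing over $\q$ and integrating against the $\Lambda$-weights neither omits nor double counts any configuration. The delicate point is therefore to verify that the indicator $\ind_{\{\min u_i^v\in\fe_{\q}(v)\}}\delta_v(\dd(\min u_i^v))$ in \eqref{eq:internalweight} matches precisely the event ``$v$ equals its earliest child'', with all other edges carrying strictly-descendant ($\Psi$) weights; consistency with the algebraic relations \eqref{eq:cumrec}--\eqref{eq:dualformula} at each order, and the explicit check on $l=2$ already furnished by \eqref{meanmeas2}, provide a useful cross-check that the bookkeeping of Dirac masses is correct.
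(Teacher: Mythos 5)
Your proposal is correct and follows essentially the same route as the paper, whose own proof of this theorem is a one-line appeal to Theorem~\ref{theo:cluster_cumulant} followed by ``successive conditionings on the different internal nodes of the dendrogram and on the immigrants''; your rooted induction over the dendrogram, the use of \eqref{meanmeas} and \eqref{meanmeas2}, and the bookkeeping of the Dirac masses attached to the artificial nodes are exactly the details the paper leaves implicit. No discrepancy to report.
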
	
            	\begin{proof}
            		The result follows as a straightforward consequence of Theorem~\ref{theo:cluster_cumulant} {\color{black}and successive conditionings on the different internal nodes of the dendrogram and on the immigrants.}
            	\end{proof}	
                           }

            Once the above result stated, it remains to list all dendograms with a given number of terminal nodes. Clearly, the less nodes we have, the less complicated is the enumeration of all possible dendrograms. For instance, Figure~\ref{fig:cumul_order-1} shows the unique dendrogram having two terminal nodes $t_1 $ and $t_2$. 

            \begin{figure}[ht]
	\centering
	\begin{tikzpicture}[level distance=10mm,every node/.style={{fill=blue!25},rectangle,draw},level 1/.style={sibling distance=20mm,nodes={fill=blue!1}},level 2/.style={sibling distance=10mm,nodes=},level 3/.style={sibling distance=5mm,nodes=}]\node {\(x\)}child {node {\(u\)}child {node {\(t_1\)}}child {node {\(t_2\)}}};
	\end{tikzpicture}                 
	\caption{The unique dendrogram for $l=2$.}
	\label{fig:cumul_order-1}
\end{figure}

            In practice,  Murtagh~\cite{Murtagh1984} showed how to compute the number of dendrograms with $l$ terminal nodes, given $l\geq 1$. However listing the dendrograms is tedious when $l$ increases.  For example, according to Murtagh~\cite{Murtagh1984}, there are: one dendrogram for $l=2$, four for $l=3$, twenty six for $l=4$ and even more for $l$ larger.  But the set of dendrograms $\mathbb{Q}(t_1,\cdots,t_{l})$ can be decomposed in equivalence classes whose elements result from permutations of the positions of the terminal nodes $t_1,\cdots,t_{l}$. Notice that the permutations that only swap the positions of terminal nodes with the same parents do not modify the dendrogram. Thus they only count as one.
            We have the following particular cases:
            \begin{itemize}
                \item In the case $l=2$, we have only one dendrogram given in Figure~\ref{fig:cumul_order-1}.
                      We deduce that the second order cumulant measure $k_2(\dd t_1,\dd t_2) = \E[\dd N_{t_1}\dd N_{t_2}]-\E[\dd N_{t_1}]\E[\dd N_{t_2}]$ satisfies
{\color{black}
                \begin{equation}\label{eq:order-2}
                    \ind_{\Delta_2}k_2(\dd t_1,\dd t_2) = \int_{\R^2}\mu \dd x \Lambda[x](\dd u) \Lambda[u](\dd t_1,\dd t_2).
                \end{equation}
                where the integration is with respect to the immigrant and the internal node \(\dd x\) and \(\dd u\).}
                \item In the case $l=3$, we have four dendrograms but only two equivalence classes described in Figure~\ref{fig:cumul_order-3}.

                \begin{figure}[ht]
                    \centering
\begin{subfigure}{0.4\textwidth}
\centering
		\begin{tikzpicture}[level distance=10mm,every node/.style={{fill=blue!25},rectangle,draw},level 1/.style={sibling distance=20mm,nodes={fill=blue!1}},level 2/.style={sibling distance=10mm,nodes=},level 3/.style={sibling distance=5mm,nodes=}]
		\node {\(x\)}
		    child {node {\(u\)}
		        child {node {\(t_1\)}}
		        child {node {\(t_2\)}} 
		        child {node {\(t_3\)}}
		        };
	\end{tikzpicture}               
                        \caption{}
\end{subfigure}
\hfill
\begin{subfigure}{0.4\textwidth}
\centering
         	\begin{tikzpicture}[level distance=10mm,every node/.style={{fill=blue!25},rectangle,draw},level 1/.style={sibling distance=20mm,nodes={fill=blue!1}},level 2/.style={sibling distance=10mm,nodes=},level 3/.style={sibling distance=10mm,nodes=}]
		\node {\(x\)}
		    child {node {\(u\)}
		        child {node {\(v\)}
		            child {node {\(t_1\)}}
		            child {node {\(t_2\)}}}
    		    child {node {\(t_3\)}}
		        };
	\end{tikzpicture}  
	                        \caption{}
	\end{subfigure}
        
\caption{Equivalence classes of dendrograms for $l=3$: \textbf{(a)} has only one permutation, \textbf{(b)} has three permutations..}
\label{fig:cumul_order-3}
\end{figure}

                \noindent It follows that the third order cumulant satisfies
 {\color{black}
                \begin{multline}\label{eq:order-3}
                    \ind_{\Delta_3}k_3(\dd t_1,\dd t_2,\dd t_3) = \bigg\{ \int_{\R^2} \mu \dd x \Lambda[x](\dd u)\Lambda[u](\dd t_1,\dd t_2,\dd t_3)
                    \\
                    + \sum_{\sigma}\int_{\R^3}\mu \dd x \Lambda[x](\dd u)\Lambda[u](\dd v,\dd t_{\sigma(3)}]\Lambda[v](\dd t_{\sigma(1)}, \dd t_{\sigma(2)}) \bigg\}
                \end{multline}}
where the sum is over all the permutations of $\{1,2,3\}$ in the equivalence class that corresponds to the dendrogram \textbf{(b)} in Figure~\ref{fig:cumul_order-3}.
                \item In the case $l=4$, there are $26$ dendrograms, but only five equivalence classes that are described in Figure~\ref{fig:cumul_order-4}.

                 \begin{figure}[ht]
                    \centering
\begin{subfigure}[T]{0.3\textwidth}
\centering
		\begin{tikzpicture}[level distance=10mm,every node/.style={{fill=blue!25},rectangle,draw},level 1/.style={sibling distance=20mm,nodes={fill=blue!1}},level 2/.style={sibling distance=10mm,nodes=},level 3/.style={sibling distance=5mm,nodes=}]
		\node {\(x\)}
		    child {node {\(u\)}
		        child {node {\(t_1\)}}
		        child {node {\(t_2\)}} 
		        child {node {\(t_3\)}}
		        child {node {\(t_4\)}}
		        };
	\end{tikzpicture}               
                        \caption{}
\end{subfigure}
\hfill
\begin{subfigure}[T]{0.3\textwidth}
\centering
         	\begin{tikzpicture}[level distance=10mm,every node/.style={{fill=blue!25},rectangle,draw},level 1/.style={sibling distance=20mm,nodes={fill=blue!1}},level 2/.style={sibling distance=10mm,nodes=},level 3/.style={sibling distance=10mm,nodes=},level 4/.style={sibling distance=10mm,nodes=}]
		\node {\(x\)}
		    child {node {\(u\)}
		        child {node {\(v\)}
		            child {node {\(t_1\)}}
		            child {node {\(t_2\)}}}
    		    child {node {\(t_3\)}}
    		    child {node {\(t_4\)}}
		        };
	\end{tikzpicture}  
	                        \caption{}
	\end{subfigure}
        \hfill
\begin{subfigure}[T]{0.3\textwidth}
\centering
         	\begin{tikzpicture}[level distance=10mm,every node/.style={{fill=blue!25},rectangle,draw},level 1/.style={sibling distance=20mm,nodes={fill=blue!1}},level 2/.style={sibling distance=10mm,nodes=},level 3/.style={sibling distance=10mm,nodes=},level 4/.style={sibling distance=10mm,nodes=}]
		\node {\(x\)}
		    child {node {\(u\)}
		        child {node {\(v\)}
		            child {node {\(t_1\)}}
		            child {node {\(t_2\)}}
		            child {node {\(t_3\)}}}
    		    child {node {\(t_4\)}}
		        };
	\end{tikzpicture}  
	                        \caption{}
	\end{subfigure}
\hfill
\begin{subfigure}[T]{0.3\textwidth}
\centering
         	\begin{tikzpicture}[level distance=10mm,every node/.style={{fill=blue!25},rectangle,draw},level 1/.style={sibling distance=20mm,nodes={fill=blue!1}},level 2/.style={sibling distance=20mm,nodes=},level 3/.style={sibling distance=10mm,nodes=},level 4/.style={sibling distance=10mm,nodes=}]
		\node {\(x\)}
		    child {node {\(u\)}
		        child {node {\(v\)}
		            child {node {\(t_1\)}}
		            child {node {\(t_2\)}}}
		        child {node {\(w\)}
        		    child {node {\(t_3\)}}
        		    child {node {\(t_4\)}}}
        		    };
	\end{tikzpicture}  
	                        \caption{}
	\end{subfigure}
        \hfill
\begin{subfigure}[T]{0.3\textwidth}
\centering
         	\begin{tikzpicture}[level distance=10mm,every node/.style={{fill=blue!25},rectangle,draw},level 1/.style={sibling distance=20mm,nodes={fill=blue!1}},level 2/.style={sibling distance=10mm,nodes=},level 3/.style={sibling distance=10mm,nodes=},level 4/.style={sibling distance=10mm,nodes=}]
		\node {\(x\)}
		    child {node {\(u\)}
		        child {node {\(v\)}
		            child {node {\(w\)}
		                child {node {\(t_1\)}}
		                child {node {\(t_2\)}}}
		            child {node {\(t_3\)}}}
			child {node {\(t_4\)}}
		        };
	\end{tikzpicture}  
	                        \caption{}
	\end{subfigure}

\caption{Equivalence classes of dendrograms for $l=4$: \textbf{(a)} has one permutation, \textbf{(b)} has six permutations, \textbf{(c)} has four permutations, \textbf{(d)} has three permutations, \textbf{(e)} has twelve permutations.}
\label{fig:cumul_order-4}
\end{figure}

                \noindent We deduce that the fourth order cumulant measure satisfies
                {\color{black}
                \begin{multline}\label{eq:order-4}
                    \ind_{\Delta_4}k_4(\dd t_1,\dd t_2,\dd t_3,\dd t_4) = \bigg\{ \int_{\R^2}\mu \dd x \Lambda[x](\dd u) \Lambda[u](\dd t_1,\dd t_2,\dd t_3,\dd t_4)
                    \\
                    + \sum_{\sigma}\int_{\R^3}\mu \dd x \Lambda[x](\dd u)\Lambda[u](\dd v,\dd t_{\sigma(3)},\dd t_{\sigma(4)})
                    \Lambda[v](\dd t_{\sigma(1)},\dd t_{\sigma(2)})
                    \\
                    + \sum_{\sigma}\int_{\R^3}\mu \dd x \Lambda[x](\dd u)\Lambda[u](\dd v,\dd t_{\sigma(4)})\Lambda[v](\dd t_{\sigma(1)}, \dd t_{\sigma(2)}, \dd t_{\sigma(3)})
                    \\
                    + \sum_{\sigma}\int_{\R^4}\mu \dd x \Lambda[x](\dd u)\Lambda[u](\dd v,\dd w)\Lambda[v](\dd t_{\sigma(1)}, \dd t_{\sigma(2)})\Lambda[w](\dd t_{\sigma(3)}, \dd t_{\sigma(4)})
                    \\
                    +\sum_{\sigma}\int_{\R^4}\mu \dd x \Lambda[x](\dd u)\Lambda[u](\dd v,\dd t_{\sigma(4)})\Lambda[v](\dd w,\dd t_{\sigma(3)})\Lambda[w](\dd t_{\sigma(1)}, \dd t_{\sigma(2)}),
                \end{multline}}
                where the sums are taken over the permutations of the terminal nodes within the equivalence classes \textbf{(b)}, \textbf{(c)}, \textbf{(d)}, \textbf{(e)} in Figure~\ref{fig:cumul_order-4} respectively.
            \end{itemize}
            This method can be extended to $l\geq 5$ in order to compute cumulant measures of higher order. In the particular case of our work and as we will see further, we only need bounds on the cumulant measures up to the first four orders. We deduce the following result that will be very useful in the sequel.
            
            \begin{lemma}\label{lem:cumulant}
                Let us consider the univariate Hawkes process $N$ with rate $\lambda(t)$ introduced in \eqref{eq:rate_Hawkes} above. We take $h(s) = ae^{-bs}\ind_{s>0}$ where $0<a<b,$ and write $ \ell = a/b < 1.$ Then the first and second order cumulants of $N$ satisfy 
                \begin{align}
                    k_1(\dd t_1) & = \frac{\mu}{1-\ell }\dd t_1, \label{eq:cumul_order-1}
                    \\
                    \nonumber \ind_{\Delta_2}k_2(\dd t_1,\dd t_2) & = \frac{a\mu}{1 - \ell} \bigg( 1 + \frac{1}{2}\frac{a}{b-a} \bigg)e^{-b (1-\ell )|t_1-t_2|}\dd t_1\dd t_2 \\
                    &=  \frac{a\mu}{1 - \ell} \left( 1 + \frac{1}{2}\frac{\ell }{1- \ell} \right)e^{-b (1-\ell )|t_1-t_2|}\dd t_1\dd t_2 . \label{eq:cumul_order-2}
                \end{align}
                More generally, for any $m=1,2,3,4,$ there exists $C_{m}>0$ such that we have the bound
                \begin{equation}\label{eq:cumul_order-1234}
                    \ind_{\Delta_{m}}k_{m}(\dd t_1,\cdots,\dd t_{m}) \leq C_{m}\frac{a^{m-1}\mu}{(1-\ell )^{m}}e^{-b (1-\ell)\left(\max_it_i - \min_it_i\right)}\dd t_1\cdots \dd t_{m},
                \end{equation}
                with the convention $\Delta_1=\R$.
                            \end{lemma}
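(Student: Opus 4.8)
The plan is to specialize the general dendrogram representation \eqref{eq:order-1}--\eqref{eq:order-4} to the exponential kernel, for which the auxiliary function $\Psi$ of \eqref{eq:Psi_Hawkes} admits a closed form. First I would compute $\Psi$. Since $h(s)=ae^{-bs}\ind_{s>0}$, an immediate induction gives $h^{\ast m}(t)=\frac{a^m t^{m-1}}{(m-1)!}e^{-bt}$ for $t\geq 0$, whence
\[
\Psi(t)=\sum_{m\geq 1}h^{\ast m}(t)=ae^{-bt}\sum_{m\geq 1}\frac{(at)^{m-1}}{(m-1)!}=ae^{-(b-a)t}=ae^{-b(1-\ell)t},\quad t\geq 0.
\]
In particular $\|\Psi\|_{L^1}=\frac{a}{b(1-\ell)}=\frac{\ell}{1-\ell}$, and the effective decay rate is $\kappa:=b(1-\ell)$, the \emph{dressed} rate rather than the bare rate $b$; this is precisely the rate appearing in the statement.

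Equation \eqref{eq:order-1} together with $\|h\|_{L^1}=\ell$ gives \eqref{eq:cumul_order-1} at once. For \eqref{eq:cumul_order-2} I would substitute $\Psi$ into \eqref{eq:order-2}. Integrating the immigrant weight first yields $\int\mu\dd x\,\Lambda[x](\dd u)=\mu\bigl(1+\|\Psi\|_{L^1}\bigr)\dd u=\frac{\mu}{1-\ell}\dd u$, i.e. the mean measure $k_1(\dd u)$. Assuming $t_1<t_2$ and using $\Lambda[u](\dd t_1,\dd t_2)=\Psi(t_1-u)\Psi(t_2-u)\dd t_1\dd t_2+\delta_u(\dd t_1)\Psi(t_2-u)\dd t_2$, the Dirac part contributes $\frac{\mu}{1-\ell}\Psi(t_2-t_1)=\frac{a\mu}{1-\ell}e^{-\kappa|t_1-t_2|}$, while the continuous part, after the elementary integral $\int_{-\infty}^{t_1}e^{2\kappa u}\dd u=\frac{1}{2\kappa}e^{2\kappa t_1}$, contributes $\frac{a\mu}{1-\ell}\cdot\frac{1}{2}\frac{\ell}{1-\ell}e^{-\kappa|t_1-t_2|}$. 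Summing the two gives exactly \eqref{eq:cumul_order-2}.

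For the bound \eqref{eq:cumul_order-1234} I would treat each dendrogram in \eqref{eq:order-3}--\eqref{eq:order-4} separately; there are finitely many for $m\leq 4$, so it suffices to bound one representative and sum. Consider the fully continuous term of a dendrogram with $p$ internal nodes. After integrating out the immigrant (producing the prefactor $\frac{\mu}{1-\ell}$) one is left with $p$ nested integrations over internal-node positions and $p+m-1$ factors $\Psi(\cdot)\propto a$. Each internal integration has the form $\int\Psi(v-\text{parent})\prod_{\text{children }c}\Psi(c-v)\dd v$, which for the exponential kernel is elementary: for a node with $d\geq 2$ children the $v$-exponent has coefficient $(d-1)\kappa$, so the integral produces a factor at most $\frac{1}{\kappa}=\frac{1}{b(1-\ell)}$ together with exponentials that telescope. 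Collecting $a^{p+m-1}b^{-p}=a^{m-1}\ell^{p}$ and the $p$ factors $(1-\ell)^{-1}$, the term is bounded by $C\,\frac{a^{m-1}\mu\,\ell^{p}}{(1-\ell)^{p+1}}\,e^{-\kappa(\max_i t_i-\min_i t_i)}$. Since every internal node of a dendrogram has at least two children, one has $p\leq m-1$, hence $p+1\leq m$ and $\ell^{p}(1-\ell)^{\,m-p-1}\leq 1$; this yields the desired bound $C\,\frac{a^{m-1}\mu}{(1-\ell)^{m}}e^{-\kappa(\max_i t_i-\min_i t_i)}$. The Dirac terms (immigrant equal to the common ancestor, or an internal node equal to its minimal child) only remove integrations and $\Psi$ factors, so they are dominated by the same expression.

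The main obstacle is the bookkeeping around the exponential decay rate and the Dirac contributions: one must verify that for every dendrogram shape the nested integrations really do telescope to the single span $e^{-b(1-\ell)(\max_i t_i-\min_i t_i)}$ with the full rate $\kappa$, and that each Dirac replacement keeps the power of $(1-\ell)^{-1}$ no larger than $m$. Concretely, this amounts to checking that the position of the most recent common ancestor can always be integrated against the incoming $\Psi$ from the immigrant so as to convert the decay $e^{-\kappa(\max_i t_i-\text{ancestor})}$ into $e^{-\kappa(\max_i t_i-\min_i t_i)}$, uniformly over the finitely many tree shapes arising for $m\leq 4$.
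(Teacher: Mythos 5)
Your proposal follows essentially the same route as the paper: compute $\Psi(t)=ae^{-b(1-\ell)t}\ind_{t>0}$, read $k_1$ and $k_2$ directly off the dendrogram formulas, and bound each dendrogram contribution for $m=3,4$ by iterating the elementary integral $\int_{\R}\prod_{i=1}^{d}\Psi(\alpha_i-v)\dd v=\frac{a}{d(b-a)}\prod_{i\geq 2}\Psi(\alpha_i-\alpha_1)$ together with its variant containing one factor $\Psi(|\alpha-v|)$ --- which is precisely the paper's Lemma~\ref{lem:bound_integral}. The only difference is presentational: you compress the verification that the exponentials telescope to $e^{-b(1-\ell)(\max_i t_i-\min_i t_i)}$ and that the Dirac contributions are dominated into a uniform power count over dendrograms (with $p+1\leq m$ integrations each worth at most $a/(b-a)=\ell/(1-\ell)\leq (1-\ell)^{-1}$, consistent with the exponent $m$ in \eqref{eq:cumul_order-1234}), whereas the paper carries this check out explicitly for each equivalence class in Figures~\ref{fig:cumul_order-3} and~\ref{fig:cumul_order-4}.
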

            In this result, the inequality $m_1\leq m_2$ between two positive measures holds in the sense that for any non negative test function $\phi$ we have $\int\phi(t)m_1(\dd t) \leq \int\phi(t)m_2(\dd t)$.
            
            \subsubsection*{Main steps of the proof of Theorem~\ref{theo:Hawkes}} Recall that we want to use the criterion given in 
            Lemma~\ref{lem:critgen} in order to control Type II error of the test $\ind_{\bC_n(\X_n)>q_{1-\alpha}(\X_n)}$ by $\beta$. For that purpose, we need to compute \(\Delta_\varphi\) 
            and the variances of $\fp(X_1,X_2)$ in those models. We then split the proof as follows. 
            \begin{itemize}
                \item[\textbf{Step 1:}] We first show that the variances of $\fp(X_1,X_2)$ under each model (the observed and the independent) can be approximated by its variance $\V_0$ under the mean-field limit model. Under this very model, the spike trains are independent Poisson processes with intensity $\frac{\nu}{1-\ell}$ (see Delattre et al. \cite{del}). We prove in the appendix that  
                \begin{equation}\label{eq:var_mf_limit}
                    \V_0 = \frac{4\nu^2\delta T}{(1-\ell )^2} \left\{ 1 + \frac{2\nu\delta}{1-\ell}  - \frac{\delta}{2T}\left( 1 + \frac{10}{3}\frac{\nu\delta}{1-\ell} \right) \right\}.
                \end{equation}
                Using Lemma~\ref{lem:cumulant}, we deduce that there exists a constant $C>0$ independent of the parameters such that
                \begin{equation}\label{eq:var_Hawkes_ind}
                    0\leq \Vz\big[ \fp(X_1,\bX_2) \big] - \V_0 \leq C\frac{aT}{M}\frac{\nu^2\delta^2}{(1-\ell )^3}\left\{ 1 + \frac{\nu+\frac{a}{M}}{b( 1 - \ell)^2} \right\} 
                \end{equation}
                and
                \begin{equation}\label{eq:var_Hawkes}
                    0\leq \Vo\big[ \fp(X_1,\bX_2) \big] - \V_0 \leq C\frac{aT}{M} \frac{\nu\delta}{(1-\ell )^2}\bigg\{ 1 + \frac{(\nu+\frac{a}{M})\delta}{1-\ell }\left[ 1 + \frac{\nu+\frac{a}{M}}{b( 1-\ell)^2} \right] \bigg\}.
                \end{equation}
               As a consequence, the difference between the variances of $\fp(X_1,X_2)$ under the observed and independent models is proportional to $1/M$.
                
                \item[\textbf{Step 2:}] We prove in the Appendix~\ref{appendixC2} that \(\Delta_\varphi\) is equal to
                \begin{equation}\label{eq:mean_cumul_hawkes}
                    \Delta_\varphi = \Eo\big[ \varphi(X^1_1,X^2_1) \big] - \Ez\big[ \varphi(X^1_1,X^2_1) \big] = \frac{1}{M^2}\int_0^T\int_0^T\ind_{|u-v|\leq\delta}\ind_{u\neq v}k_2(\dd u,\dd v)
                \end{equation}
                where $1/M^2$ is the probability that the jump times $u,v$  correspond to the first and the second coordinates of an observation. Using \eqref{eq:cumul_order-2} in Lemma~\ref{lem:cumulant}, we deduce that 
                \begin{equation}
                \label{eq:mean_Hawkes}
                    \Delta_\varphi = 
                    \frac{\nu\delta}{M}\frac{2 - \ell }{(1-\ell )^3}\ell \left\{ (T-\delta)\frac{1 - e^{-b( 1-\ell)\delta}}{\delta} + 1 - \frac{1 - e^{-b(1-\ell)\delta}}{b (1- \ell) \delta} \right\}.
                  \end{equation}
                Supposing that $ b ( 1 - \ell ) T > 4,$ using moreover that $ \delta \le T/2,$ we obtain the lower bound 
                \begin{equation}\label{eq:bound_Hawkes}
                   \Delta_\varphi \geq C \frac{\nu \ell T}{M(1- \ell)^3}(1-e^{-b(1-\ell)\delta}) ,  
                \end{equation}
                for some constant not depending on the model parameters. 
            \end{itemize}
            We conclude by replacing the variances in the criterion \eqref{eq:20230127_1} of Lemma~\ref{lem:critgen} by their upper bounds in the approximations \eqref{eq:var_Hawkes_ind}-\eqref{eq:var_Hawkes}. We solve the resulting inequality with respect to the unknown $n$ and deduce the lower bound \eqref{eq:min_size_Hawkes} on the number of observations that ensures the good control of the {\color{black}error of second kind}.

\bibliographystyle{abbrv} 
\bibliography{biblio.bib}

\appendix
\section{Control of variances}\label{appendixa}
    This section is dedicated to the proof of Lemma~\ref{lem:critgen}. We start with a sketch of the proof and then detail 
    the more technical parts in the next subsections.
                    \subsection{Sketch of the Proof of Lemma~\ref{lem:critgen}}
            We easily show that given $\X_n$, the permuted statistics $\bT_n(\X^{\Pi_n}_n)$ is centered. Indeed, it follows from \eqref{ustat} that
            \[ \E\big[ \bT_n(\X^{\Pi_n}_n)\big|\X_n \big] = \frac{1}{n(n-1)}\sum_{i\neq j}\E\big[ \varphi(X^1_i,X^2_{\Pi_n(i)}) - \varphi(X^1_i,X^2_{\Pi_n(j)}) \big|\X_n\big] = 0 ,\]
            since the laws of $(i,\Pi_n(i))$ and $(i,\Pi_n(j))$ are equal. Given this property, 
            Kim et al. \cite{kim} showed that the type II error of the test $\ind_{\bT_n(\X_n)>q_{1-\alpha}(\X_n)}$ is controlled by $\beta$ if
            \begin{equation}\label{eq:kim}
                \Eo\left[ \bT_n(\X_n) \right] \geq \sqrt{\frac{2\Vo\left[ \bT_n(\X_n) \right]}{\beta}} + \sqrt{\frac{2\Eo\left[\Vo\left[\bT_n(\X^{\Pi_n}_n)\middle|\X_n \right] \right]}{\alpha\beta}} .
            \end{equation}
            Our aim is now to show that if the condition \eqref{eq:20230127_1} is satisfied for a constant $C>0$ and $n\geq 3/\sqrt{\alpha\beta}$, 
            then the criterion 
            \eqref{eq:kim} holds.
            We start with a presentation of the main steps of the proofs.
            
            \noindent \textbf{Step 1:} 
            First we prove 
            that the variance of the test statistics $\bT_n(\X_n)$ under the observed model is bounded by the one of $\fp(X_1,X_2)$ through the inequality
            \begin{equation}\label{bound_var}
                \Vo\big[ \bT_n(\X_n) \big] \leq \frac{4}{n}\Vo\big[ \fp(X_1,X_2) \big].
            \end{equation}

            \noindent \textbf{Step 2:} Using the fact that given $\X_n$ the distribution of $\bT_n(\X_n^{\Pi_n})$ is centered, we show that the expectation of the conditional variance of this permuted test statistics is mainly bounded by the variance of $\fp(X_1,X_2)$ under independence with a rest. This bound is given as follows 
            \begin{equation}\label{bound_pvar}
                \Eo\bigg(\Vo\big[ \bT_n(\X_n^{\Pi_n}) \big|\X_n\big] \bigg) \leq \frac{2}{n} \bigg\{\frac{2}{n}\Eo\big[ \big| \fp(X_1,X_2) \big|^2 \big] + \frac{n-2}{n}\Vz\big[ \fp(X_1,X_2) \big] \bigg\}.
            \end{equation}

            \noindent \textbf{Step 3:} 
            We deduce from the bounds \eqref{bound_var}-\eqref{bound_pvar} 
            that the right-hand term of (\ref{eq:kim}) is upper bounded by 
            \[ 4\sqrt{\frac{2}{\alpha\beta}}\left\{ \sqrt{\frac{\Vo\big[ \fp(X_1,X_2) \big]}{n}} + \sqrt{\frac{\Vz\big[ \fp(X_1,X_2) \big]}{n}} \right\} + \frac{2}{n}\sqrt{\frac{2}{\alpha\beta}}\E\big[ \fp(X_1,X_2) \big]. \]
            Using \eqref{eq:mean_ustat} and $n\geq 3/\sqrt{\alpha\beta}$, we deduce a constant $C>0$ for which the condition \eqref{eq:20230127_1} implies that the criterion \eqref{eq:kim} holds, implying that the {\color{black}error of second kind} is bounded by $\beta$.

    \subsection{Proof of Inequality \texorpdfstring{\eqref{bound_var}}{}}\label{app:var}
        This section is devoted to the proof of the bound
        \[ \Vo\left[ \bT_n(\X_n) \right] \leq \frac{4}{n}\Vo\left[ \fp(X_1,X_2) \right] . \]
        For that purpose, we deduce from \eqref{ustat} that
        \[
            \Vo\left[ \bT_n(\X_n) \right] = \frac{1}{n^2(n-1)^2} \sum_{\substack{i\neq j\\ k\neq l}}\Eo\left[\left\{ \fp(X_i,X_j) - \Eo\left[ \fp(X_1,X_2) \right] \right\}\left\{ \fp(X_k,X_l) - \Eo\left[ \fp(X_1,X_2) \right] \right\}\right]. 
        \]
        Thanks to the independence of the \((X_i)\), the terms in the previous sum with all different \(i, j, k, l\) are null. In addition, using Cauchy-Schwarz inequality, 
        \[
            \Eo\left[\left\{ \fp(X_i,X_j) - \Eo\left[ \fp(X_1,X_2) \right] \right\}\left\{ \fp(X_k,X_l) - \Eo\left[ \fp(X_1,X_2) \right] \right\}\right] \leq \Vo\left[ \fp(X_1,X_2) \right].
        \]
        The number of non zero terms is at most
        \[n^2(n-1)^2 - n(n-1)(n-2)(n-3) = n(n-1)(4n - 6),\]
        that ends the proof.

    \subsection{Proof of Ineq. \texorpdfstring{\eqref{bound_pvar}}{}}\label{app:pvar}
        Since $\bT_n(\X_n^{\Pi_n})$ is centered under the permutation law, its variance under this law is given by
        \begin{equation}
            \Vo\big[ \bT_n(\X^{\Pi_n}) \big| \X_n \big] = \frac{1}{n^2(n-1)^2} \sum_{\substack{i\neq j\\ k\neq l}}\Eo\bigg[\fp(X^{\Pi_n}_i,X^{\Pi_n}_j)\fp(X^{\Pi_n}_k,X^{\Pi_n}_l) \bigg|\X_n\bigg].
        \end{equation}
        Our aim is to compute precise bounds for the conditional expectations in this expression for $i\neq j$ and $k\neq l$. Let us first notice that
        \begin{multline}\label{esp_ijkl}
            \Eo\bigg[ \fp(X^{\Pi_n}_i,X^{\Pi_n}_j)\fp(X^{\Pi_n}_k,X^{\Pi_n}_l) \bigg|\X_n\bigg] 
            \\
            =\, \sum_{\substack{i',j',k',l'\\ i'\neq j'\\ k'\neq l'}}\eP(\Pi_n(i)=i', \Pi_n(j)=j', \Pi_n(k)=k', \Pi_n(l)=l') 
            \\
            \times\, \bigg\{ \varphi(X^1_i,X^2_{i'}) - \varphi(X^1_i,X^2_{j'}) \bigg\}\bigg\{ \varphi(X^1_k,X^2_{k'}) - \varphi(X^1_k,X^2_{l'}) \bigg\},
        \end{multline}
        then it suffices to check the different possible choices of $i,j,k,l$ and bound each of them thanks to the following lemma.
        \begin{lemma}
            For any $i,i',j',k,k',l'\in\{1,\cdots,n\}$ such that $i'\neq j'$ and $k'\neq l'$, we have the bound
            \begin{multline}\label{eq:bound_int}
                \Eo\bigg[ \bigg\{ \varphi(X^1_i,X^2_{i'}) - \varphi(X^1_i,X^2_{j'}) \bigg\}\bigg\{ \varphi(X^1_k,X^2_{k'}) - \varphi(X^1_k,X^2_{l'}) \bigg\} \bigg] 
                \\
                \leq 
                \begin{cases}
                    \Eo\big[ \big|\fp(X_1,X_2)\big|^2 \big] & \textrm{ if } i\in\{i',j'\} \textrm{ and } k\in\{k',l'\}, \vspace{0.20cm}
                    \\
                    \Ez\big[ \big|\fp(X_1,X_2)\big|^2 \big] & \textrm{ if } i\notin\{i',j'\} \textrm{ and } k\notin\{k',l'\}, \vspace{0.20cm}
                    \\
                    \frac{1}{2}\big\{\Eo\big[ \big|\fp(X_1,X_2)\big|^2 \big] + \Ez\big[ \big|\fp(X_1,X_2)\big|^2 \big]\big\} & \textrm{ otherwise.} \vspace{0.20cm}
                \end{cases}
            \end{multline}
        \end{lemma}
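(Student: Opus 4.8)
The plan is to reduce the product bound to a separate control of the second moment of each factor, via Cauchy--Schwarz, and then to recognise that each factor is distributed exactly as $\fp(X_1,X_2)$ either under the observed model $\Po$ or under the independent model $\Pz$, according to the position of its first index. Write the two factors as
\[
A = \varphi(X^1_i,X^2_{i'}) - \varphi(X^1_i,X^2_{j'}),
\qquad
B = \varphi(X^1_k,X^2_{k'}) - \varphi(X^1_k,X^2_{l'}).
\]
Since Cauchy--Schwarz gives $\Eo[AB] \le \sqrt{\Eo[A^2]\,\Eo[B^2]}$ with no independence assumption on $A$ and $B$, it suffices to evaluate the marginal second moments $\Eo[A^2]$ and $\Eo[B^2]$ one at a time; these will turn out to depend only on whether $i \in \{i',j'\}$ and whether $k \in \{k',l'\}$.

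The crucial step is the identification of the marginal law of $A$. Because $X_1,\dots,X_n$ are i.i.d.\ pairs, the coordinates $X^1_i$ and $X^2_m$ are independent as soon as $m \neq i$, whereas $(X^1_i,X^2_i)$ is a genuine pair with law $\iP$. As $i' \neq j'$, the index $i$ can coincide with at most one of them. When $i \in \{i',j'\}$, exactly one of the two terms of $A$ involves the dependent pair $(X^1_i,X^2_i)$ while the other involves an independent copy of the second marginal; hence $A$ has, up to a sign irrelevant to its square, the law of $\fp(X_1,X_2)$ under $\Po$, so that $\Eo[A^2] = \Eo[|\fp(X_1,X_2)|^2]$. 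When instead $i \notin \{i',j'\}$, the processes $X^1_i$, $X^2_{i'}$ and $X^2_{j'}$ stem from three distinct trials and are therefore mutually independent, with joint law $\iP^1\otimes\iP^2\otimes\iP^2$; this is exactly the law of $(X^1_1,X^2_1,X^2_2)$ under $\Pz$, whence $\Eo[A^2] = \Ez[|\fp(X_1,X_2)|^2]$. The same dichotomy applies verbatim to $B$, with $k$ and $\{k',l'\}$ replacing $i$ and $\{i',j'\}$.

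It then remains to combine these identities through Cauchy--Schwarz. If $i\in\{i',j'\}$ and $k\in\{k',l'\}$, the bound is $\sqrt{\Eo[|\fp|^2]\,\Eo[|\fp|^2]} = \Eo[|\fp(X_1,X_2)|^2]$; if $i\notin\{i',j'\}$ and $k\notin\{k',l'\}$, it is $\Ez[|\fp(X_1,X_2)|^2]$; and in the remaining mixed case it is $\sqrt{\Eo[|\fp|^2]\,\Ez[|\fp|^2]}$, to which I would apply the elementary inequality $\sqrt{ab}\le\tfrac12(a+b)$ to recover the claimed $\tfrac12\{\Eo[|\fp|^2]+\Ez[|\fp|^2]\}$. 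The main obstacle is precisely the identification step of the previous paragraph: one must use $i'\neq j'$ (so that at most one term of $A$ carries the within-pair dependence) together with the i.i.d.\ product structure of the sample to read off the correct reference law, $\Po$ or $\Pz$, for each factor. The rest of the argument is routine bookkeeping over the possible coincidences of the indices.
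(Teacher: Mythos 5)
Your proof is correct and follows essentially the same route as the paper: the paper bounds the cross term by $\tfrac12(\Eo[A^2]+\Eo[B^2])$ via the elementary inequality $xy\le\tfrac12(x^2+y^2)$ and then performs exactly the same identification of each marginal second moment with $\Eo[|\fp(X_1,X_2)|^2]$ or $\Ez[|\fp(X_1,X_2)|^2]$ according to whether $i\in\{i',j'\}$ and $k\in\{k',l'\}$. Your use of Cauchy--Schwarz followed by $\sqrt{ab}\le\tfrac12(a+b)$ is only a cosmetic variant of that step and yields the same three cases.
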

        
        This intermediate result directly follows from the inequality
        \begin{multline*}
            \Eo\bigg[ \bigg\{ \varphi(X^1_i,X^2_{i'}) - \varphi(X^1_i,X^2_{j'}) \bigg\}\bigg\{ \varphi(X^1_k,X^2_{k'}) - \varphi(X^1_k,X^2_{l'}) \bigg\} \bigg]
            \\
            \leq \frac{1}{2} \left\{ \Eo\left[ \big| \varphi(X^1_i,X^2_{i'}) - \varphi(X^1_i,X^2_{j'}) \big|^2 \right] + \Eo\left[ \left| \varphi(X^1_k,X^2_{k'}) - \varphi(X^1_k,X^2_{l'}) \right|^2 \right] \right\}
        \end{multline*}
        where we distinguished the cases $i\in\{i',j'\}$, $i\notin\{i',j'\}$, $k\in\{k',l'\}$ and $k\notin\{k',l'\}$. Returning to \eqref{esp_ijkl}, we have seven cases depending on the possible values of the integers $i,j,k,l$:
        \begin{itemize}
            \item[\textbf{Case 1}] We assume that $i=k$ and $j=l$. In this case, the sum in \eqref{esp_ijkl} is such that $i'=k'$ and $j'=l'$. It follows that 
            \begin{align*}
                \Eo\bigg[ \fp(X^{\Pi_n}_i,X^{\Pi_n}_j)\fp(X^{\Pi_n}_k,X^{\Pi_n}_l) \bigg|\X_n\bigg]
                 &= \sum_{i'\neq j'}\eP(\Pi_n(i)=i', \Pi_n(j)=j')\bigg\{ \varphi(X^1_i,X^2_{i'}) - \varphi(X^1_i,X^2_{j'}) \bigg\}^2 
                \\
               & = \frac{1}{n(n-1)} \sum_{i'\neq j'}\bigg\{ \varphi(X^1_i,X^2_{i'}) - \varphi(X^1_i,X^2_{j'}) \bigg\}^2
            \end{align*}
            and then thanks to \eqref{eq:bound_int} with $i=k, i'=k', j'=l'$,
            \begin{multline*}
            \Eo\bigg[ \fp(X^{\Pi_n}_i,X^{\Pi_n}_j)\fp(X^{\Pi_n}_k,X^{\Pi_n}_l) \bigg]
                \\
                 =\, \frac{1}{n(n-1)} \sum_{i'\neq j'}\E\bigg[\bigg\{ \varphi(X^1_i,X^2_{i'}) - \varphi(X^1_i,X^2_{j'}) \bigg\}^2\bigg] \vspace{0.20cm}
                \\
                \leq\, \frac{1}{n(n-1)} \sum_{i'\neq j'}\bigg\{ \Eo\big[ \big|\fp(X_1,X_2)\big|^2 \big]\ind_{i=i'\textrm{ or }i=j'} + \Ez\big[ \big|\fp(X_1,X_2)\big|^2 \big] \ind_{i\neq i'\textrm{ and }i\neq j'\}} \bigg\} \vspace{0.20cm}
                \\
                 =\, \frac{1}{n(n-1)} \bigg\{ 2(n-1)\Eo\big[ \big|\fp(X_1,X_2)\big|^2 \big] + (n-1)(n-2)\Ez\big[ \big|\fp(X_1,X_2)\big|^2 \big] \bigg\},
            \end{multline*}
            which finally gives
            \begin{equation}\label{case1}
                \Eo\bigg[ \fp(X^{\Pi_n}_i,X^{\Pi_n}_j)\fp(X^{\Pi_n}_k,X^{\Pi_n}_l) \bigg] \leq \frac{2}{n}\Eo\big[ \big|\fp(X_1,X_2)\big|^2 \big] + \frac{n-2}{n}\Ez\big[ \big|\fp(X_1,X_2)\big|^2 \big] .
            \end{equation}
            
            \item[\textbf{Case 2}] We now assume that $i=l$ and $j=k$. In this case, the sum in \eqref{esp_ijkl} is such that $i'=l'$ and $j'=k'$. We deduce that
                 \begin{multline*}
                 \Eo\bigg[ \fp(X^{\Pi_n}_i,X^{\Pi_n}_j)\fp(X^{\Pi_n}_k,X^{\Pi_n}_l) \bigg|\X_n\bigg] = 
                \\
                  \sum_{i'\neq j'}\eP(\Pi_n(i)=i', \Pi_n(j)=j')\bigg\{ \varphi(X^1_i,X^2_{i'}) - \varphi(X^1_i,X^2_{j'}) \bigg\}
                 \bigg\{ \varphi(X^1_j,X^2_{j'}) - \varphi(X^1_j,X^2_{i'}) \bigg\} 
                \\
                 = \frac{1}{n(n-1)} \sum_{i'\neq j'}\bigg\{ \varphi(X^1_i,X^2_{i'}) - \varphi(X^1_i,X^2_{j'}) \bigg\}\bigg\{ \varphi(X^1_j,X^2_{j'}) - \varphi(X^1_j,X^2_{i'}) \bigg\} ,
            \end{multline*}
             which implies thanks to \eqref{eq:bound_int} with $k=j, k'=j',l'=i'$ that
               \begin{eqnarray*}
               &&\Eo\bigg[ \fp(X^{\Pi_n}_i,X^{\Pi_n}_j)\fp(X^{\Pi_n}_k,X^{\Pi_n}_l) \bigg]  \leq\, \frac{1}{n(n-1)} \times 
                \\
               && \sum_{i'\neq j'}\bigg\{ \Eo\big[ \big|\fp(X_1,X_2)\big|^2 \big]\ind_{i\in\{i',j'\}}\ind_{j\in\{i',j'\}} + \Ez\big[ \big|\fp(X_1,X_2)\big|^2 \big] \ind_{i\notin\{i',j'\}}\ind_{j\notin\{i',j'\}} \vspace{0.20cm}
                \\
               && +\, \frac{1}{2}\left( \Eo\big[ \big|\fp(X_1,X_2)\big|^2 \big] + \Ez\big[ \big|\fp(X_1,X_2)\big|^2 \big] \right)\times \\
               &&\quad \quad  \quad \quad\quad \quad\quad \quad \quad \quad  \quad \quad \times \left( 1 - \ind_{i\in\{i',j'\}}\ind_{j\in\{i',j'\}} - \ind_{i\notin\{i',j'\}}\ind_{j\notin\{i',j'\}} \right) \bigg\} \vspace{0.20cm}
                \\
               && =\, \frac{1}{n(n-1)} \bigg\{ 2\Eo\big[ \big|\fp(X_1,X_2)\big|^2 \big] + (n-2)(n-3)\Ez\big[ \big|\fp(X_1,X_2)\big|^2 \big] \vspace{0.20cm}
                \\
               && +\, 2(n-2)\left( \Eo\big[ \big|\fp(X_1,X_2)\big|^2 \big] + \Ez\big[ \big|\fp(X_1,X_2)\big|^2 \big] \right) \bigg\}
            \end{eqnarray*}
               and finally gives
            \begin{equation}\label{case2}
                \Eo\bigg[ \fp(X^{\Pi_n}_i,X^{\Pi_n}_j)\fp(X^{\Pi_n}_k,X^{\Pi_n}_l) \bigg] \leq \frac{2}{n}\Eo\big[ \big|\fp(X_1,X_2)\big|^2 \big] + \frac{n-2}{n}\Ez\big[ \big|\fp(X_1,X_2)\big|^2 \big] .
            \end{equation}
            
            \item[\textbf{Case 3}] We assume that $i=k$ and $j\neq l$, then the sum in \eqref{esp_ijkl} is such that $i'=k'$ and $j'\neq l'$. We deduce that
                       \begin{eqnarray*}
               && \Eo\bigg[ \fp(X^{\Pi_n}_i,X^{\Pi_n}_j)\fp(X^{\Pi_n}_k,X^{\Pi_n}_l) \bigg|\X_n\bigg]
                \\
               && =\, \sum_{\substack{i',j',l'\\ \textrm{different}}}\eP(\Pi_n(i)=i', \Pi_n(j)=j',\Pi_n(l)=l') \vspace{0.20cm}
                \\
               && \times\,\bigg\{ \varphi(X^1_i,X^2_{i'}) - \varphi(X^1_i,X^2_{j'}) \bigg\}\bigg\{ \varphi(X^1_i,X^2_{i'}) - \varphi(X^1_i,X^2_{l'}) \bigg\} \vspace{0.20cm}
                \\
               &&=\, \frac{1}{n(n-1)(n-2)} \sum_{\substack{i',j',l'\\\textrm{different}}}\bigg\{ \varphi(X^1_i,X^2_{i'}) - \varphi(X^1_i,X^2_{j'}) \bigg\}\bigg\{ \varphi(X^1_i,X^2_{i'}) - \varphi(X^1_i,X^2_{l'}) \bigg\},
            \end{eqnarray*}
            and thanks to \eqref{eq:bound_int} with $k=i, k'=i'$, we have
                      \begin{eqnarray*}
               &&\Eo\bigg[ \fp(X^{\Pi_n}_i,X^{\Pi_n}_j)\fp(X^{\Pi_n}_k,X^{\Pi_n}_l) \bigg]
                \\
               && \leq\, \frac{1}{n(n-1)(n-2)} \sum_{\substack{i',j',l'\\\textrm{different}}}\bigg\{ \Eo\big[ \big|\fp(X_1,X_2)\big|^2 \big]\ind_{i\in\{i',j'\}}\ind_{i\in\{i',l'\}} \\
               && + \Ez\big[ \big|\fp(X_1,X_2)\big|^2 \big] \ind_{i\notin\{i',j'\}}\ind_{i\notin\{i',l'\}} 
                     \!  + \frac{1}{2}\! \left( \Eo\big[ \big|\fp(X_1,X_2)\big|^2 \big] + \Ez\big[ \big|\fp(X_1,X_2)\big|^2 \big] \right)\\
                     && \quad \quad  \quad  \quad \quad \quad \quad \quad \quad   \quad \quad \quad \quad \quad \times  \left( 1 - \ind_{i\in\{i',j'\}}\ind_{i\in\{i',l'\}} - \ind_{i\notin\{i',j'\}}\ind_{i\notin\{i',l'\}} \right) \bigg\} 
                \\
               && =\, \frac{1}{n(n-1)(n-2)} \bigg\{ (n-1)(n-2)\Eo\big[ \big|\fp(X_1,X_2)\big|^2 \big] \\
               &&  \quad \quad \quad \quad + (n-1)(n-2)(n-3)\Ez\big[ \big|\fp(X_1,X_2)\big|^2 \big] 
                \\
              && \quad \quad \quad \quad\quad \quad \quad \quad  +\, (n-1)(n-2)\left( \Eo\big[ \big|\fp(X_1,X_2)\big|^2 \big] + \Ez\big[ \big|\fp(X_1,X_2)\big|^2 \big] \right) \bigg\},
            \end{eqnarray*}
              which implies 
            \begin{equation}\label{case3}
                \Eo\bigg[ \fp(X^{\Pi_n}_i,X^{\Pi_n}_j)\fp(X^{\Pi_n}_k,X^{\Pi_n}_l) \bigg] \leq \frac{2}{n}\Eo\big[ \big|\fp(X_1,X_2)\big|^2 \big] 
                + \frac{n-2}{n}\Ez\big[ \big|\fp(X_1,X_2)\big|^2 \big] .
            \end{equation}
            
            \item[\textbf{Case 4}] We assume that $i\neq k$ and $j=l$, then the sum in \eqref{esp_ijkl} is such that $i'\neq k'$ and $j'=l'$. It follows that
                       \begin{eqnarray*}
               &&\Eo\bigg[ \fp(X^{\Pi_n}_i,X^{\Pi_n}_j)\fp(X^{\Pi_n}_k,X^{\Pi_n}_l) \bigg|\X_n\bigg]
                \\
                && =\, \sum_{\substack{i',j',k'\\ \textrm{different}}}\eP(\Pi_n(i)=i', \Pi_n(j)=j',\Pi_n(k)=k') \vspace{0.20cm}
                \\
               && \times\,\bigg\{ \varphi(X^1_i,X^2_{i'}) - \varphi(X^1_i,X^2_{j'}) \bigg\}\bigg\{ \varphi(X^1_k,X^2_{k'}) - \varphi(X^1_k,X^2_{j'}) \bigg\} \vspace{0.20cm}
                \\
               && =\, \frac{1}{n(n-1)(n-2)} \sum_{\substack{i',j',k'\\\textrm{different}}}\bigg\{ \varphi(X^1_i,X^2_{i'}) - \varphi(X^1_i,X^2_{j'}) \bigg\}\bigg\{ \varphi(X^1_k,X^2_{k'}) - \varphi(X^1_k,X^2_{j'}) \bigg\} ,
            \end{eqnarray*}
                    and then
            \begin{multline}\label{case4}
                \Eo\bigg[ \fp(X^{\Pi_n}_i,X^{\Pi_n}_j)\fp(X^{\Pi_n}_k,X^{\Pi_n}_l) \bigg]
                \\
                = \frac{1}{n(n-1)(n-2)} \sum_{\substack{i',j',k'\\\textrm{different}}}\Eo\bigg[\bigg\{ \varphi(X^1_i,X^2_{i'}) - \varphi(X^1_i,X^2_{j'}) \bigg\}\bigg\{ \varphi(X^1_k,X^2_{k'}) - \varphi(X^1_k,X^2_{j'}) \bigg\} \bigg].
            \end{multline}
            
            \item[\textbf{Case 5}] We assume here that $i\neq l$ and $j=k$, then the sum in \eqref{esp_ijkl} is such that $i'\neq l'$ and $j'=k'$. Then,
                       \begin{eqnarray*}
              && \Eo\bigg[ \fp(X^{\Pi_n}_i,X^{\Pi_n}_j)\fp(X^{\Pi_n}_k,X^{\Pi_n}_l) \bigg|\X_n\bigg]
                \\
               && =\, \sum_{\substack{i',j',l'\\ \textrm{different}}}\eP(\Pi_n(i)=i', \Pi_n(j)=j',\Pi_n(l)=l') \vspace{0.20cm}
                \\
                && \times\,\bigg\{ \varphi(X^1_i,X^2_{i'}) - \varphi(X^1_i,X^2_{j'}) \bigg\}\bigg\{ \varphi(X^1_j,X^2_{j'}) - \varphi(X^1_j,X^2_{l'}) \bigg\} \vspace{0.20cm}
                \\
                && =\, \frac{1}{n(n-1)(n-2)} \sum_{\substack{i',j',l'\\\textrm{different}}}\bigg\{ \varphi(X^1_i,X^2_{i'}) - \varphi(X^1_i,X^2_{j'}) \bigg\}\bigg\{ \varphi(X^1_j,X^2_{j'}) - \varphi(X^1_j,X^2_{l'}) \bigg\}
            \end{eqnarray*}
           which finally gives
            \begin{multline}\label{case5}
                \Eo\bigg[ \fp(X^{\Pi_n}_i,X^{\Pi_n}_j)\fp(X^{\Pi_n}_k,X^{\Pi_n}_l) \bigg]
                \\
                = \frac{1}{n(n-1)(n-2)} \sum_{\substack{i',j',l'\\\textrm{different}}}\Eo\bigg[\bigg\{ \varphi(X^1_i,X^2_{i'}) - \varphi(X^1_i,X^2_{j'}) \bigg\}\bigg\{ \varphi(X^1_j,X^2_{j'}) - \varphi(X^1_j,X^2_{l'}) \bigg\} \bigg].
            \end{multline}
            \begin{remark}\label{rmk:case45}
                The sum of the expectations under \textbf{Case 4} given by \eqref{case4} and \textbf{Case 5} given by \eqref{case5} is null.
            \end{remark}
            
            \item[\textbf{Case 6}] We now assume that $i=l$ and $j\neq k$, then the sum in \eqref{esp_ijkl} is such that $i'=l'$ and $j'\neq k'$. We deduce that
                       \begin{eqnarray*}
              && \Eo\bigg[ \fp(X^{\Pi_n}_i,X^{\Pi_n}_j)\fp(X^{\Pi_n}_k,X^{\Pi_n}_l) \bigg|\X_n\bigg]
                \\
               && =\, \sum_{\substack{i',j',k'\\ \textrm{different}}}\eP(\Pi_n(i)=i', \Pi_n(j)=j',\Pi_n(k)=k') \vspace{0.20cm}
                \\
              && \times\,\bigg\{ \varphi(X^1_i,X^2_{i'}) - \varphi(X^1_i,X^2_{j'}) \bigg\}\bigg\{ \varphi(X^1_k,X^2_{k'}) - \varphi(X^1_k,X^2_{i'}) \bigg\} \vspace{0.20cm}
                \\
              && =\, \frac{1}{n(n-1)(n-2)} \sum_{\substack{i',j',k'\\\textrm{different}}}\bigg\{ \varphi(X^1_i,X^2_{i'}) - \varphi(X^1_i,X^2_{j'}) \bigg\}\bigg\{ \varphi(X^1_k,X^2_{k'}) - \varphi(X^1_k,X^2_{i'}) \bigg\}
            \end{eqnarray*}
                 and then, thanks to \eqref{eq:bound_int} with $l'=i'$, we have
             \begin{eqnarray*}
             && \Eo\bigg[ \fp(X^{\Pi_n}_i,X^{\Pi_n}_j)\fp(X^{\Pi_n}_k,X^{\Pi_n}_l) \bigg]
                \\
               && \leq\, \frac{1}{n(n-1)(n-2)} \sum_{\substack{i',j',k'\\\textrm{different}}}\bigg\{ \Eo\big[ \big|\fp(X_1,X_2)\big|^2 \big]\ind_{i\in\{i',j'\}}\ind_{k\in\{k',i'\}} \\
               && \quad \quad \quad \quad\quad \quad \quad \quad\quad \quad \quad \quad\quad \quad \quad \quad+ \Ez\big[ \big|\fp(X_1,X_2)\big|^2 \big] \ind_{i\notin\{i',j'\}}\ind_{k\notin\{k',i'\}}                 \\
             && +\, \frac{1}{2}\left( \Eo\big[ \big|\fp(X_1,X_2)\big|^2 \big] + \Ez\big[ \big|\fp(X_1,X_2)\big|^2 \big] \right)\\
             && \quad \quad \quad \quad \quad \quad \quad \quad \quad \quad \quad \quad \left( 1 - \ind_{i\in\{i',j'\}}\ind_{k\in\{k',i'\}} - \ind_{i\notin\{i',j'\}}\ind_{k\notin\{k',i'\}} \right) \bigg\}. 
            \end{eqnarray*}
               In addition, we have
            \[ \sum_{\substack{i',j',k'\\ \textrm{different}}}\ind_{i\in\{i',j'\}}\ind_{k\in\{k',i'\}} = \sum_{\substack{i',j',k'\\ \textrm{different}}}\left( \ind_{i'=i}\ind_{k'=k} + \ind_{j'=i}\ind_{k'=k} + \ind_{j'=i}\ind_{i'=k} \right) = 3(n-2)  \]
            and
             \begin{eqnarray*}
                &&\sum_{\substack{i',j',k'\\ \textrm{different}}} \ind_{i\notin\{i',j'\}}\ind_{k\notin\{k',i'\}} = \sum_{\substack{i',j',k'\\ \textrm{different}}} \ind_{i'\notin\{i,k\}}\ind_{j'\neq i}\ind_{k'\neq k} \vspace{0.20cm}
                \\
               && =\, \sum_{\substack{i',j',k'\\ \textrm{different}}} \ind_{i'\notin\{i,k\}}\ind_{j'\neq i}\ind_{k'\neq k} \left( \ind_{j'=k}\ind_{k'=i} + \ind_{j'=k}\ind_{k'\neq i} + \ind_{j'\neq k}\ind_{k'=i} + \ind_{j'\neq k}\ind_{k'\neq i} \right) \vspace{0.20cm}
                \\
             && =\, \sum_{i'}\ind_{i'\notin\{i,k\}} + \sum_{i'\neq k'}\ind_{i',k'\notin\{i,k\}} + \sum_{i'\neq j'}\ind_{i',j'\notin\{i,k\}} + \sum_{\substack{i',j',k'\\ \textrm{different}}}\ind_{i',j',k'\notin\{i,k\}} \vspace{0.20cm}
                \\
               &&=\, n-2 +2(n-2)(n-3) + (n-2)(n-3)(n-4)
            \end{eqnarray*}
            that gives
            \[ \sum_{\substack{i',j',k'\\ \textrm{different}}} \ind_{i\notin\{i',j'\}}\ind_{k\notin\{k',i'\}} = (n-2)\big( 1 + (n-2)(n-3) \big). \]
            It follows that
                \begin{eqnarray*}
               && \Eo\bigg[ \fp(X^{\Pi_n}_i,X^{\Pi_n}_j)\fp(X^{\Pi_n}_k,X^{\Pi_n}_l) \bigg]
                \\
              && \leq\, \frac{1}{n(n-1)(n-2)} \bigg\{ 3(n-2)\Eo\big[ \big|\fp(X_1,X_2)\big|^2 \big] \\
              &&\quad \quad \quad \quad  + (n-2)\big( 1 + (n-2)(n-3) \big)\Ez\big[ \big|\fp(X_1,X_2)\big|^2 \big] 
                \\
             &&\quad \quad \quad \quad  +\, (n-2)(2n-5)\left( \Eo\big[ \big|\fp(X_1,X_2)\big|^2 \big] + \Ez\big[ \big|\fp(X_1,X_2)\big|^2 \big] \right) \bigg\} ,
            \end{eqnarray*}
               and then
            \begin{equation}\label{case6}
                \Eo\bigg[ \fp(X^{\Pi_n}_i,X^{\Pi_n}_j)\fp(X^{\Pi_n}_k,X^{\Pi_n}_l) \bigg] \leq \frac{2}{n}\Eo\big[ \big|\fp(X_1,X_2)\big|^2 \big] + \frac{n-2}{n}\Ez\big[ \big|\fp(X_1,X_2)\big|^2 \big] .
            \end{equation}
            
            \item[\textbf{Case 7}] We finally assume that $i\notin\{k,l\}$ and $j\notin\{k,l\}$, then the sum in \eqref{esp_ijkl} is such that $i'\notin\{k',l'\}$ and $j'\notin\{k',l'\}$. It follows that
               \begin{eqnarray*}
              &&\Eo\bigg[ \fp(X^{\Pi_n}_i,X^{\Pi_n}_j)\fp(X^{\Pi_n}_k,X^{\Pi_n}_l) \bigg|\X_n\bigg]
                \\
             &&=\, \sum_{\substack{i',j',k',l'\\ \textrm{different}}}\eP(\Pi_n(i)=i', \Pi_n(j)=j',\Pi_n(k)=k',\Pi_n(l)=l') \vspace{0.20cm}
                \\
             && \times\,\bigg\{ \varphi(X^1_i,X^2_{i'}) - \varphi(X^1_i,X^2_{j'}) \bigg\}\bigg\{ \varphi(X^1_k,X^2_{k'}) - \varphi(X^1_k,X^2_{l'}) \bigg\} \vspace{0.20cm}
                \\
          &&=\, \frac{1}{n(n-1)(n-2)(n-3)} \sum_{\substack{i',j',k',l'\\\textrm{different}}}\bigg\{ \varphi(X^1_i,X^2_{i'}) - \varphi(X^1_i,X^2_{j'}) \bigg\}\\
          && \quad \quad \quad \quad\quad \quad \quad \quad\quad \quad \quad \quad\quad \quad \quad \quad\quad \quad \quad \quad\quad  \bigg\{ \varphi(X^1_k,X^2_{k'}) - \varphi(X^1_k,X^2_{l'}) \bigg\} .
            \end{eqnarray*}
            Distinguishing the cases $i'=i, j'=i$ and $i',j'\neq i$, we get
            \begin{eqnarray*}
               && \Eo\bigg[ \fp(X^{\Pi_n}_i,X^{\Pi_n}_j)\fp(X^{\Pi_n}_k,X^{\Pi_n}_l) \bigg|\X_n\bigg]  = \frac{1}{n(n-1)(n-2)(n-3)}  \times 
                \\
               &&\bigg[\sum_{\substack{i',j',k',l'\\\textrm{different}}}\bigg\{ \varphi(X^1_i,X^2_{i'}) - \varphi(X^1_i,X^2_{j'}) \bigg\}\bigg\{ \varphi(X^1_k,X^2_{k'}) - \varphi(X^1_k,X^2_{l'}) \bigg\}\ind_{i'=i}
                \\
                &&+ \sum_{\substack{i',j',k',l'\\\textrm{different}}}\bigg\{ \varphi(X^1_i,X^2_{i'}) - \varphi(X^1_i,X^2_{j'}) \bigg\}\bigg\{ \varphi(X^1_k,X^2_{k'}) - \varphi(X^1_k,X^2_{l'}) \bigg\}\ind_{j'=i}
                \\
                &&+ \sum_{\substack{i',j',k',l'\\\textrm{different}}}\bigg\{ \varphi(X^1_i,X^2_{i'}) - \varphi(X^1_i,X^2_{j'}) \bigg\}\bigg\{ \varphi(X^1_k,X^2_{k'}) - \varphi(X^1_k,X^2_{l'}) \bigg\} \ind_{i',j'\neq i} \bigg]
                \\
                &&= \frac{1}{n(n-1)(n-2)(n-3)} \times  \\
                && \bigg[\sum_{\substack{j',k',l'\\\textrm{different}\\j',k',l'\neq i}}\bigg\{ \varphi(X^1_i,X^2_{i}) - \varphi(X^1_i,X^2_{j'}) \bigg\}\bigg\{ \varphi(X^1_k,X^2_{k'}) - \varphi(X^1_k,X^2_{l'}) \bigg\}
                \\
                && + \sum_{\substack{i',k',l'\\\textrm{different}\\ i',k',l'\neq i}}\bigg\{ \varphi(X^1_i,X^2_{i'}) - \varphi(X^1_i,X^2_{i}) \bigg\}\bigg\{ \varphi(X^1_k,X^2_{k'}) - \varphi(X^1_k,X^2_{l'}) \bigg\}
                \\
                &&+ \sum_{\substack{i',j',k',l'\\\textrm{different}}}\bigg\{ \varphi(X^1_i,X^2_{i'}) - \varphi(X^1_i,X^2_{j'}) \bigg\}\bigg\{ \varphi(X^1_k,X^2_{k'}) - \varphi(X^1_k,X^2_{l'}) \bigg\} \ind_{i',j'\neq i} \bigg].
            \end{eqnarray*}
            The two first sums compensates each other and we get
            \begin{multline*}
                \Eo\bigg[ \fp(X^{\Pi_n}_i,X^{\Pi_n}_j)\fp(X^{\Pi_n}_k,X^{\Pi_n}_l) \bigg|\X_n\bigg]
                \\
                = \sum_{\substack{i',j',k',l'\\\textrm{different}}}\bigg\{ \varphi(X^1_i,X^2_{i'}) - \varphi(X^1_i,X^2_{j'}) \bigg\}\bigg\{ \varphi(X^1_k,X^2_{k'}) - \varphi(X^1_k,X^2_{l'}) \bigg\} \ind_{i',j'\neq i}.
            \end{multline*}
            A similar reduction of this sum holds by distinguishing the cases: $i'=k, j'=k$ and $i',j'\neq k$; $k'=i, l'=i$ and $k',l'\neq i$; $k'=k, l'=k$ and $k',l'\neq k$. It follows that
                     \begin{eqnarray*}
             && \Eo\bigg[ \fp(X^{\Pi_n}_i,X^{\Pi_n}_j)\fp(X^{\Pi_n}_k,X^{\Pi_n}_l) \bigg|\X_n\bigg] =\, \frac{1}{n(n-1)(n-2)(n-3)} \times 
                \\
              &&  \sum_{\substack{i',j',k',l'\\\textrm{different}\\i',j',k',l'\notin\{i,k\}}}\bigg\{ \varphi(X^1_i,X^2_{i'}) - \varphi(X^1_i,X^2_{j'}) \bigg\}\bigg\{ \varphi(X^1_k,X^2_{k'}) - \varphi(X^1_k,X^2_{l'}) \bigg\},
            \end{eqnarray*}
             and we finally get
            \begin{equation}\label{case7}
                \Eo\bigg[ \fp(X^{\Pi_n}_i,X^{\Pi_n}_j)\fp(X^{\Pi_n}_k,X^{\Pi_n}_l) \bigg] = 0.
            \end{equation}
        \end{itemize}
        We deduce from \eqref{case1}---\eqref{case7} and Remark~\ref{rmk:case45} that
                \begin{eqnarray*}
               && \Eo\bigg\{\Vo\big[ T_{n}(\X^{\Pi_n}_n) \big|\,\X_n \big] \bigg\} = \frac{1}{n^2(n-1)^2} \\
               && \quad \sum_{\substack{i\neq j\\ k\neq l}}\bigg\{ \Eo^{\textbf{Case 1}}\bigg[ \fp(X^{\Pi_n}_i,X^{\Pi_n}_j)\fp(X^{\Pi_n}_k,X^{\Pi_n}_l) \bigg]\ind_{i=k,j=l} 
                \\
                && \quad +\, \Eo^{\textbf{Case 2}}\bigg[ \fp(X^{\Pi_n}_i,X^{\Pi_n}_j)\fp(X^{\Pi_n}_k,X^{\Pi_n}_l) \bigg]\ind_{i=l,j=k}  \vspace{0.20cm}
                \\
               &&\quad +\, \Eo^{\textbf{Case 3}}\bigg[ \fp(X^{\Pi_n}_i,X^{\Pi_n}_j)\fp(X^{\Pi_n}_k,X^{\Pi_n}_l) \bigg]\ind_{i=k,j\neq l}  \vspace{0.20cm}
                \\
              &&\quad  +\, \Eo^{\textbf{Case 6}}\bigg[ \fp(X^{\Pi_n}_i,X^{\Pi_n}_j)\fp(X^{\Pi_n}_k,X^{\Pi_n}_l) \bigg]\ind_{i=l,j\neq k} \bigg\}  ,
            \end{eqnarray*}
        and then
            \begin{eqnarray*}
    &&   \Eo\bigg\{\Vo\big[ T_{n}(\X^{\Pi_n}_n) \big|\,\X_n \big] \bigg\} \leq\, \\
    && \frac{2}{n^2(n-1)^2}\bigg\{ n(n-1) \bigg[ \frac{2}{n}\Eo\big[ \big|\fp(X_1,X_2)\big|^2 \big] + \frac{n-2}{n}\Ez\big[ \big|\fp(X_1,X_2)\big|^2 \big] \bigg] \vspace{0.20cm} 
                \\
                && +\, n(n-1)(n-2)\bigg[ \frac{2}{n}\Eo\big[ \big|\fp(X_1,X_2)\big|^2 \big] + \frac{n-2}{n}\Ez\big[ \big| \fp(X_1,X_2) \big|^2 \big] \bigg] \bigg\} \vspace{0.20cm}
                \\
                &&=\, \frac{2}{n}\bigg\{ \frac{2}{n} \Eo\big[ \big|\fp(X_1,X_2)\big|^2 \big] + \frac{n-2}{n}\Ez\big[ \big|\fp(X_1,X_2)\big|^2 \big] \bigg\},
            \end{eqnarray*}
        which ends the proof.
        
        \subsection{Proof of Lemma~\ref{lem:critgen}}\label{app:proof_critgen} 
                This proof consists in showing that the condition \eqref{eq:kim} -- that allows to control the type II error by a given $\beta\in(0,1)$ -- is satisfied under the criterion of Lemma~\ref{lem:critgen}. For this purpose, let us first notice that from \eqref{bound_var} and \eqref{bound_pvar} proved in Appendix~\ref{app:var} and \ref{app:pvar} respectively, we obtain
        \begin{align*}
            & \sqrt{\frac{2\Vo\left[ \bT_n(\X_n) \right]}{\beta}} + \sqrt{\frac{2\Eo\left[\Vo\left[\bT_n(\X^{\Pi_n}_n)\middle|\X_n \right] \right]}{\alpha\beta}}
            \\
            &\quad \leq 2\sqrt{\frac{2}{\beta}}\sqrt{\frac{\Vo\big[ \fp(X_1,X_2) \big]}{n}} + \frac{2}{\sqrt{\alpha\beta}}\sqrt{\frac{2}{n^2}\Eo\big[ |\fp(X_1,X_2)|^2 \big] + \frac{n-2}{n^2}\Vz\big[ \fp(X_1,X_2) \big]}
            \\
            &\quad \leq 2\sqrt{\frac{2}{\alpha\beta}}\left( \sqrt{\alpha} + \frac{1}{\sqrt{n}} \right)\sqrt{\frac{\Vo\big[ \fp(X_1,X_2) \big]}{n}} + \frac{2}{\sqrt{\alpha\beta}}\sqrt{\frac{\Vz\big[ \fp(X_1,X_2) \big]}{n}} 
            \\
            &\qquad + \frac{2}{n}\sqrt{\frac{2}{\alpha\beta}}\E\big[ \fp(X_1,X_2) \big], 
        \end{align*}
        where we used the decomposition 
        $\Eo\big[ |\fp(X_1,X_2)|^2 \big] = \Vo\big[ \fp(X_1,X_2) \big] + (\Eo\big[ \fp(X_1,X_2) \big])^2$ and the inequality $\sqrt{a+b}\leq\sqrt{a} + \sqrt{b}$ for any $a,b\geq 0$. 
        Since $\sqrt{\alpha}+1/\sqrt{n}\leq 2$, it follows that
        \begin{align*}
            & \sqrt{\frac{2\Vo\left[ \bT_n(\X_n) \right]}{\beta}} + \sqrt{\frac{2\Eo\left[\Vo\left[\bT_n(\X^{\Pi_n}_n)\middle|\X_n \right] \right]}{\alpha\beta}}
            \\
            &\quad \leq 4\sqrt{\frac{2}{\alpha\beta}}\left\{ \sqrt{\frac{\Vo\big[ \fp(X_1,X_2) \big]}{n}} + \sqrt{\frac{\Vz\big[ \fp(X_1,X_2) \big]}{n}} \right\} + \frac{2}{n}\sqrt{\frac{2}{\alpha\beta}}\E\big[ \fp(X_1,X_2) \big].
        \end{align*}
        In particular, if $n\geq 3/\sqrt{\alpha\beta}$, then
        \begin{align*}
            & \sqrt{\frac{2\Vo\left[ \bT_n(\X_n) \right]}{\beta}} + \sqrt{\frac{2\Eo\left[\Vo\left[\bT_n(\X^{\Pi_n}_n)\middle|\X_n \right] \right]}{\alpha\beta}}
            \\
            &\quad \leq 4\sqrt{\frac{2}{\alpha\beta}}\left\{ \sqrt{\frac{\Vo\big[ \fp(X_1,X_2) \big]}{n}} + \sqrt{\frac{\Vz\big[ \fp(X_1,X_2) \big]}{n}} \right\} + \frac{2\sqrt{2}}{3}\E\big[ \fp(X_1,X_2) \big].
        \end{align*}
        We deduce that the condition \eqref{eq:kim} is satisfied if this upper bound is lower than $\E\big[ \bT_n(\X_n) \big] = \E\big[ \fp(X_1,X_2) \big]$, i.e. if
        \[ \E\big[ \fp(X_1,X_2) \big] \geq \frac{4}{1-\frac{2\sqrt{2}}{3}}\sqrt{\frac{2}{\alpha\beta}}\left\{ \sqrt{\frac{\Vo\big[ \fp(X_1,X_2) \big]}{n}} + \sqrt{\frac{\Vz\big[ \fp(X_1,X_2) \big]}{n}} \right\}. \]
        This corresponds to the condition \eqref{eq:20230127_1} with $C = \frac{4\sqrt{2}}{1-(2/3)\sqrt{2}}$.

\section{The jittering injection model}\label{app:Poisson}
    For the sake of simplicity, in what follows we use the same notation for a Poisson process and its sequence of jump times.    
    \subsection{Proof of Eq. \texorpdfstring{\eqref{eq:mean_sjit}}{}}\label{subsectionB1}
        We use the notations of Model~\ref{mod:jittering}. Notice that
        \begin{equation}\label{eq:decphi}
            \varphi\left(X^1,X^2\right) = \varphi\left(Y^1, Y^2\right) + \varphi\left(Y^1, Z^2\right) + \varphi\left( Z^1, Y^2\right) 
            + \varphi\left( Z^1, Z^2\right).
        \end{equation}
        Since $Y^1,Y^2,Z^1$ and $(\xi_k)_{k\in\mathbb{Z}}$ are independent, the expectations of $\varphi( Y^1, Y^2)$, $\varphi( Y^1, Z^2)$ and 
        $\varphi( Z^1, Y^2)$ under the observed and the independent models do not change. In addition, we have  
        \begin{align*}
            \varphi\left(Z^1,Z^2\right) & = \int_0^T\ind_{|\xi_{Z^1_u}|\leq\delta}\ind_{u+\xi_{Z^1_{u}}\in[0,T]}\dd Z^1_u 
            \\
            &~ + \int_{\Delta_2}\ind_{|u_1-u_2-\xi_{Z^1_{u_2}}|\leq\delta}
            \ind_{u_1\in[0,T]}
            \ind_{u_2+\xi_{Z^1_{u_2}}\in[0,T]} \dd Z^1_{u_1}\dd Z^1_{u_2},
        \end{align*}
        where we recall that $ \Delta_2$ has been introduced in \eqref{eq:Deltaell} above. 
        Using Lemma~\ref{lem:Poisson}  to calculate the expectation of the second integral, we get  
        \[
        \Eo \left[\int_{\Delta_2}\ind_{|u_1-u_2-\xi_{Z^1_{u_2}}|\leq\delta}
            \ind_{u_1\in[0,T]}
            \ind_{u_2+\xi_{Z^1_{u_2}}\in[0,T]} \dd Z^1_{u_1}\dd Z^1_{u_2} \right] = \Ez \left[ \varphi ( Z^1, Z^2 ) \right]. 
        \]
            Therefore,
        \begin{eqnarray}\label{eq:w}
            \Eo\left[ \varphi(X^1_1,X^2_1) \right] - \Ez\left[ \varphi(X^1_1,X^2_1) \right] &  = \Eo\left[ \int_0^T\ind_{|\xi_{Z^1_{u}}|\leq\delta}\ind_{u+\xi_{Z^1_{u}}\in[0,T]}\dd Z^1_u \right]
            \nonumber \\
            & = \eta^1\Eo\left[ \ind_{|\xi|\leq\delta} \int_0^T\ind_{u+\xi\in[0,T]}\dd u \right]\nonumber 
            \\
            & = \eta^1 \E\big[(T-|\xi|)\ind_{|\xi|\leq\delta} \big]. 
        \end{eqnarray}
        The lower bound \eqref{eq:lower_bound_jit} follows by using the inequality $\delta\leq T/2$ which implies that $|\xi|\ind_{|\xi|\leq\delta}\leq \frac{T}{2}\ind_{|\xi|\leq\delta}$.
    
    \subsection{Proof of Eq. \texorpdfstring{\eqref{eq:var_jit_ind}}{} and \texorpdfstring{\eqref{eq:var_jit}}{}} 
    Consider two counting processes \(N^1\) and \(N^2\) that almost surely do not jump together. Then we have
    \begin{equation}\label{eq:coinc}
        \varphi(N^1,N^2) = \int_0^T\left[ N^2_{(u+\delta)\wedge T} - N^2_{(u-\delta)_+} \right]\dd N^1_u.
    \end{equation}

    \begin{remark}
    Notice that the integrand appearing in \eqref{eq:coinc} is not predictable. Therefore we shall use this representation only in the case when $ N^1 $ and $ N^2$ are independent. 
    \end{remark}

    We deduce the following result
    \begin{lemma}\label{lem:mom2_indep}
        Let $N^1,N^2,N^3$ be three independent Poisson processes on $[0,T]$ with intensities $\mu^1,\mu^2$ and $\mu^3$ respectively. We introduce the quantities 
        \begin{equation}
            I^T_{\delta} = 2\delta T - \delta^2 \textrm{ and }J^T_{\delta} = 4\delta^2T-\frac{10}{3}\delta^3 .
        \end{equation}
        Then we have
        \begin{align}
       \E \big[ \varphi ( N^1, N^2 ) \big] &= \mu^1 \mu^2 I_\delta^T,\\    
            \label{eq:mom2_indep}
          \Vo \big[   \varphi(N^1,N^2)   \big]  &=  \mu^1\mu^2 \left[ (\mu^1+\mu^2)J^T_{\delta} + I^T_{\delta} \right], \\
        \label{eq:cross_indep}
            \E\big[ \varphi(N^1,N^2)\varphi(N^1,N^3) \big] &= \mu^1\mu^2\mu^3J^T_{\delta} + (\mu^1)^2\mu^2\mu^3(I^T_{\delta})^2.
        \end{align}
        As a consequence, if $\mu^3=\mu^2$, then we obtain
        \begin{equation}\label{eq:Vardiff}
            \E\left\{ \left[ \varphi(N^1,N^2) - \varphi(N^1,N^3) \right]^2 \right\} = 2\mu^1\mu^2\left[ I^T_{\delta} + \mu^1J^T_{\delta} \right].
        \end{equation}
    \end{lemma}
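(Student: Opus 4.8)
The plan is to compute every moment by writing $\varphi$ as a sum over pairs of points and applying the multivariate Campbell--Mecke formula for independent Poisson processes, which turns the expectation of a sum over \emph{distinct} points into an integral against the product intensity. Throughout, let $w(u) = (u+\delta)\wedge T - (u-\delta)_+$ be the length of the coincidence window centred at $u$, so that by \eqref{eq:defvarphi}
\[
\varphi(N^1,N^2) = \sum_{u\in N^1}\sum_{v\in N^2}\ind_{|u-v|\le\delta}.
\]
All the expressions reduce to the two deterministic integrals $\int_0^T w(u)\,\dd u = I^T_\delta$ and $\int_0^T w(u)^2\,\dd u = J^T_\delta$, which I would evaluate once by splitting $[0,T]$ into $[0,\delta]$, $[\delta,T-\delta]$, $[T-\delta,T]$ (using $\delta\le T/2$): on the middle piece $w\equiv 2\delta$, and on the end pieces $w(u)=u+\delta$ and $w(u)=T-u+\delta$. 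This yields $I^T_\delta = 2\delta T - \delta^2$ and $J^T_\delta = 4\delta^2 T - \tfrac{10}{3}\delta^3$. For the mean I would condition on $N^1$: since $N^2$ is an independent Poisson process of rate $\mu^2$, $\Eo[\sum_{v\in N^2}\ind_{|u-v|\le\delta}] = \mu^2 w(u)$, and Campbell's formula for $N^1$ gives $\Eo[\varphi(N^1,N^2)] = \mu^1\mu^2\int_0^T w(u)\,\dd u = \mu^1\mu^2 I^T_\delta$.

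For the second moment I would expand $\varphi(N^1,N^2)^2$ as the quadruple sum over $u_1,u_2\in N^1$ and $v_1,v_2\in N^2$ of $\ind_{|u_1-v_1|\le\delta}\ind_{|u_2-v_2|\le\delta}$ and split it according to the coincidence pattern of the two $N^1$-indices and of the two $N^2$-indices, giving four cases. The fully diagonal case $u_1=u_2,\,v_1=v_2$ reproduces $\varphi(N^1,N^2)$, with expectation $\mu^1\mu^2 I^T_\delta$. The two mixed cases each integrate to $J^T_\delta$: for $u_1=u_2,\,v_1\ne v_2$ one applies the first moment measure $\mu^1\,\dd u$ of $N^1$ and the second factorial moment measure $(\mu^2)^2\,\dd v_1\,\dd v_2$ of $N^2$, producing $\mu^1(\mu^2)^2\int_0^T w(u)^2\,\dd u$, while the mirror case produces $(\mu^1)^2\mu^2 J^T_\delta$. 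The fully off-diagonal case factorises into $(\mu^1)^2(\mu^2)^2 (I^T_\delta)^2$. Subtracting $(\Eo[\varphi])^2 = (\mu^1\mu^2 I^T_\delta)^2$ cancels exactly this last term and leaves $\Vo[\varphi(N^1,N^2)] = \mu^1\mu^2 I^T_\delta + \mu^1\mu^2(\mu^1+\mu^2)J^T_\delta$, which is \eqref{eq:mom2_indep}.

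The cross moment is simpler because $N^2$ and $N^3$ are independent, so the only coupling between $\varphi(N^1,N^2)$ and $\varphi(N^1,N^3)$ is through the common process $N^1$; I would split the sum over the two $N^1$-indices into $u_1=u_2$ and $u_1\ne u_2$. The diagonal part gives $\mu^1\mu^2\mu^3\int_0^T w^2 = \mu^1\mu^2\mu^3 J^T_\delta$ and the off-diagonal part factorises into $(\mu^1)^2\mu^2\mu^3 (I^T_\delta)^2$, which is \eqref{eq:cross_indep}. Finally, \eqref{eq:Vardiff} follows by expanding $\E\{[\varphi(N^1,N^2)-\varphi(N^1,N^3)]^2\}$ and inserting the above with $\mu^3=\mu^2$; the $(I^T_\delta)^2$ contributions cancel and one is left with $2\mu^1\mu^2[I^T_\delta + \mu^1 J^T_\delta]$.

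The only genuinely delicate step is the bookkeeping in the second-moment expansion: tracking the combinatorial prefactors across the four coincidence cases, applying the first and second factorial moment measures correctly when both window-indicators are integrated against the \emph{same} point process, and checking that the fully off-diagonal contribution cancels $(\Eo[\varphi])^2$. The piecewise evaluations of $I^T_\delta$ and $J^T_\delta$ are then routine once the regime $\delta\le T/2$ is used.
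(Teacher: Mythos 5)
Your proposal is correct and follows essentially the same route as the paper: both reduce everything to the two integrals $I^T_{\delta}=\int_0^T \tw(u)\,\dd u$ and $J^T_{\delta}=\int_0^T \tw(u)^2\,\dd u$ by splitting Poisson moments into diagonal and off-diagonal contributions, the only difference being that you organize the second moment as a four-case factorial-moment-measure expansion of the quadruple sum while the paper conditions on $N^2$ and applies the second-moment formula for the stochastic integral $\int_0^T[N^2_{(u+\delta)\wedge T}-N^2_{(u-\delta)_+}]\,\dd N^1_u$. The case analysis, the cancellation of the $(I^T_{\delta})^2$ terms, and the final values all match.
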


    \begin{remark}
        The quantities $I^T_{\delta}$ and $J^T_{\delta}$ correspond to the mean and variance of the conditional expectation $\E[\varphi(N^1,N^2)|N^2]$ in the particular case $\mu^1 = \mu^2 = 1$.
    \end{remark}
    
    \begin{proof}
    \textbf{Step 1.} We start with some preliminary remarks. Consider the measurable functions defined for $(u,v)\in\R^2$ by
    \begin{align}
        w(u,v) & = \ind_{|u-v|\leq\delta}\ind_{u\in[0,T]}\ind_{v\in[0,T]},
        \\
        \tw(u) & := \int_{\R}w(u,v)\dd v = [(u+\delta)\wedge T - (u-\delta)_+]\ind_{u\in[0,T]}.
    \end{align}
    Let  $N$ be a Poisson process with intensity $1.$ Then we have that 
    \[ 
    \int_0^T \E \big[ N_{(u+\delta)\wedge T} - N_{(u-\delta)_+}\big] \dd u  = \int_R \tw (u) \dd u =  I_\delta^T. 
    \]
    In particular, this implies that, since $N^1$ and $N^2$ are independent, 
    \[
    \E \big[ \varphi ( N^1, N^2 ) \big] = \mu^1  \int_0^T \E \big[ N^2_{(u+\delta)\wedge T} - N^2_{(u-\delta)_+} \Big]   \dd u  = \mu^1 \mu^2 \int_0^T \tw(u) \dd u = \mu^1 \mu^2 I_\delta^T.
    \]
    We shall also use throughout the remainder of this proof that 
    \[
    J_\delta^T =  \int_0^T \tw (u)^2 \dd u  ,
    \]
    which can be easily checked.
    
      \textbf{Step 2.} \eqref{eq:coinc} implies 
        \[ \E\big[ \varphi^2(N^1,N^2) \big] = \E\left[ \left( \int_0^T\left[ N^2_{(u+\delta)\wedge T} - N^2_{(u-\delta)_+} \right]\dd N^1_u \right)^2 \right] .\]
        Conditioning on $N^2$, it follows that
        \begin{eqnarray*}
            &&\E\big[ \varphi^2(N^1,N^2) \big] \nonumber \\
            &&= \mu^1\int_0^T\E\left\{\left[ N^2_{(u+\delta)\wedge T} - N^2_{(u-\delta)_+} \right]^2\right\}\dd u + (\mu^1)^2\E\left[ \left( \int_0^T\left[ N^2_{(u+\delta)\wedge T} - N^2_{(u-\delta)_+} \right]\dd u \right)^2 \right] \nonumber 
            \\
             &&= \mu^1\int_0^T\left\{ (\mu^2)^2 \tw^2(u)   + \mu^2 \tw(u)  \right\}\dd u \nonumber \\
             &&  \quad \quad \quad \quad \quad 
             +(\mu^1)^2\int_0^T\int_0^T\E\left\{ \left[ N^2_{(u+\delta)\wedge T} - N^2_{(u-\delta)_+} \right]\left[ N^2_{(v+\delta)\wedge T} - N^2_{(v-\delta)_+} \right] \right\}\dd u\dd v .
            \end{eqnarray*}
         According to Step 0,  the first expression appearing above can be rewritten as
         \[
          \mu^1(\mu^2)^2J^T_{\delta} + \mu^1\mu^2I^T_{\delta}.
         \] 
         Moreover, studying the last expression appearing above and distinguishing the cases when the two intervals $ [ u- \delta, u+ \delta ] , [v - \delta, v+ \delta ] $ (intersected with $ [0, T ] $) do or do not overlap, we get    
            \begin{multline*}
            (\mu^1)^2\int_0^T\int_0^T\E\left\{ \left[ N^2_{(u+\delta)\wedge T} - N^2_{(u-\delta)_+} \right]\left[ N^2_{(v+\delta)\wedge T} - N^2_{(v-\delta)_+} \right] \right\}\dd u\dd v\\
=             (\mu^1)^2\int_0^T\int_0^T\bigg\{ (\mu^2)^2 \tw(u)  \tw(v)  
              + \mu^2\int_0^T\ind_{|v-s|\leq\delta}\ind_{|u-s|\leq\delta}\dd s \bigg\}\dd u\dd v
            \\
             =   (\mu^1\mu^2)^2(I^T_{\delta})^2 + (\mu^1)^2\mu^2J^T_{\delta}.
            \end{multline*}
Putting everything together, this implies that 
\[
\E\big[ \varphi^2(N^1,N^2) \big]         = \mu^1\mu^2\left[ (\mu^1+\mu^2)J^T_{\delta} + I^T_{\delta} + \mu^1\mu^2(I^T_{\delta})^2 \right].
\]

  \textbf{Step 3.} Using \eqref{eq:coinc} again, we have
        \begin{multline*}
        \E\big[ \varphi(N^1,N^2)\varphi(N^1,N^3) \big] \\
        = \E\left[ \left( \int_0^T\left[ N^2_{(u+\delta)\wedge T} - N^2_{(u-\delta)_+} \right]\dd N^1_u \right)\left( \int_0^T\left[ N^3_{(u+\delta)\wedge T} - N^3_{(u-\delta)_+} \right]\dd N^1_u \right)  \right] ,
        \end{multline*} 
        such that, conditioning on $N^2$ and $N^3$,
        \begin{multline*}
            \E\big[ \varphi(N^1,N^2)\varphi(N^1,N^3) \big] \\
            = \mu^1\int_0^T\E\left\{ \left[ N^2_{(u+\delta)\wedge T} - N^2_{(u-\delta)_+} \right]\left[ N^3_{(u+\delta)\wedge T} - N^3_{(u-\delta)_+} \right] \right\}\dd u
            \\
             + (\mu^1)^2\E\left\{ \left( \int_0^T\left[ N^2_{(u+\delta)\wedge T} - N^2_{(u-\delta)_+} \right]\dd u \right)\left( \int_0^T\left[ N^3_{(u+\delta)\wedge T} - N^3_{(u-\delta)_+} \right]\dd u \right) \right\}.
        \end{multline*}
        Since $N_2$ and $N_3$ are independent, we deduce that
        \begin{multline*}
            \E\big[ \varphi(N^1,N^2)\varphi(N^1,N^3) \big] \\
            = \mu^1\int_0^T\left( \E\left[ N^2_{(u+\delta)\wedge T} - N^2_{(u-\delta)_+} \right]\right)\left(\E\left[ N^3_{(u+\delta)\wedge T} - N^3_{(u-\delta)_+} \right] \right)\dd u
            \\
            ~ + (\mu^1)^2\E\left( \int_0^T\left[ N^2_{(u+\delta)\wedge T} - N^2_{(u-\delta)_+} \right]\dd u \right)\E\left( \int_0^T\left[ N^3_{(u+\delta)\wedge T} - N^3_{(u-\delta)_+} \right]\dd u \right) 
            \\
             = \mu^1\mu^2\mu^3J^T_{\delta} + (\mu^1)^2\mu^2\mu^3(I^T_{\delta})^2.
        \end{multline*}
        Taking in particular $\mu^3=\mu^2$, it follows that
        \begin{align*}
            \E\left\{ \left[ \varphi(N^1,N^2) - \varphi(N^1,N^3) \right]^2 \right\} & = 2\left\{ \E\left[ \varphi^2(N^1,N^2) \right] - \E\left[ \varphi(N^1,N^2)\varphi(N^1,N^3) \right] \right\}
            \\
            & = 2\mu^1\mu^2\big[ \mu^1J^T_{\delta} + I^T_{\delta} \big].
        \end{align*}
    \end{proof}
    
    \noindent Let us come back to the jittering injection Poisson model given by Model~\ref{mod:jittering}. Starting with the variance under the independent model, we have 
    \begin{align*}
        \Vz\big[ \fp(X_1,X_2) \big] = \Ez\left[ \left| \varphi(X^1_1,X^2_1) - \varphi(X^1_1,X^2_2) \right|^2 \right]
    \end{align*}
    that implies thanks to \eqref{eq:Vardiff} in Lemma~\ref{lem:mom2_indep} that
    \begin{equation}\label{eq:var_Poisson_indep}
        \Vz\big[ \fp(X_1,X_2) \big] = 2(\eta^1+\lambda^1)(\eta^1+\lambda^2)\left[ I^T_{\delta} + (\eta^1+\lambda^1)J^T_{\delta} \right].
    \end{equation}
    The bound \eqref{eq:var_jit_ind} follows thanks to the inequalities $\eta^1\leq\lamb = \max\{\lambda^1,\lambda^2\}$, $I^T_{\delta}\leq 2\delta T$ and $J^T_{\delta}\leq 4\delta^2T$. Further, the decomposition \eqref{eq:decphi} gives
    \begin{equation}
        \fp(X_1,X_2) = \Du + \Dd + \Dt + \Dq 
    \end{equation}
    where we set
    \begin{align*}
        \Du & = \varphi(Y^1_1,Y^2_1) - \varphi(Y^1_1,Y^2_2),
        \\
        \Dd & = \varphi(Y^1_1,Z^2_1) - \varphi(Y^1_1,Z^2_2),
        \\
        \Dt & = \varphi(Z^1_1,Y^2_1) - \varphi(Z^1_1,Y^2_2),
        \\
        \Dq & = \varphi(Z^1_1,Z^2_1) - \varphi(Z^1_1,Z^2_2).
    \end{align*}
    This implies that
    \begin{equation}\label{eq:dec_mom2}
        \Eo\big[ \left|\fp(X_1,X_2)\right|^2 \big] = \Eo\big[ |\Du|^2\big] + \Eo\big[|\Dd|^2\big] + \Eo\big[|\Dt|^2\big] + \Eo\big[|\Dq|^2\big] + 2\sum_{i<j}\Eo\big[ \Di\Dj \big].
    \end{equation}
    We deduce from \eqref{eq:Vardiff} in Lemma~\ref{lem:mom2_indep} that
    \begin{align*}
        \Eo\big[|\Du|^2\big] & = 2\lambda^1\lambda^2\big[ \lambda^1J^T_{\delta} + I^T_{\delta} \big]
        \\
        \Eo\big[|\Dd|^2\big] & = 2\lambda^1\eta^1\big[ \lambda^1J^T_{\delta} + I^T_{\delta} \big]
        \\
        \Eo\big[|\Dt|^2\big] & = 2\lambda^2\eta^1\big[ \eta^1 J^T_{\delta} + I^T_{\delta} \big].
    \end{align*}
    In addition, conditioning on $Y^1_1$ gives
    \begin{equation*}
        \Eo\big[ \Du\Dd \big] = 0 ,
    \end{equation*}
    and using \eqref{eq:cross_indep} in Lemma~\ref{lem:mom2_indep}, we obtain, due to the independence of $ Y^1 $ and $Y^2 $ and of $Z^1 $ and $ Y^2 , $
    \begin{multline*}
        \Eo\big[ \Du\Dt \big]  = \Eo\left[ \left(\varphi(Y^1_1,Y^2_1) - \varphi(Y^1_1,Y^2_2)\right)\left(\varphi(Z^1_1,Y^2_1) - \varphi(Z^1_1,Y^2_2)\right) \right]
        \\
         = \Eo\left[\varphi(Y^1_1,Y^2_1)\varphi(Z^1_1,Y^2_1)\right] - 2\Eo\left[\varphi(Y^1_1,Y^2_1)\right]\Eo\left[\varphi(Z^1_1,Y^2_2)\right] + \E\left[\varphi(Y^1_1,Y^2_2)\varphi(Z^1_1,Y^2_2)\right]
        \\
         = \eta^1\lambda^1\lambda^2J^T_{\delta} + \eta^1\lambda^1(\lambda^2)^2(I^T_{\delta})^2 - 2\eta^1\lambda^1(\lambda^2)^2(I^T_{\delta})^2 + \eta^1\lambda^1\lambda^2J^T_{\delta} + \eta^1\lambda^1(\lambda^2)^2(I^T_{\delta})^2 ,
    \end{multline*}
    which implies that
    \begin{equation*}
        \Eo\big[ \Du\Dt \big] = 2\eta^1\lambda^1\lambda^2J^T_{\delta}. 
    \end{equation*}
    Further, it directly follows from the independence of $(Y^1_1,Y^2_1,Y^2_2)$ and $(Z^1_1,Z^2_1,Z^2_2)$ that
    \begin{equation*}
        \Eo\big[ \Du\Dq \big] = 0.
    \end{equation*}
    We also have
    \begin{multline*}
            \Eo\left[ \Dd\Dt \right]  = \Eo\left[ \left(\varphi(Y^1_1,Z^2_1) - \varphi(Y^1_1,Z^2_2)\right)\left(\varphi(Z^1_1,Y^2_1) - \varphi(Z^1_1,Y^2_2)\right) \right]
        \\
         = \Eo\left[ \varphi(Y^1_1,Z^2_1)\left(\varphi(Z^1_1,Y^2_1) - \varphi(Z^1_1,Y^2_2)\right) \right] - \Eo\left[ \varphi(Y^1_1,Z^2_2)\left(\varphi(Z^1_1,Y^2_1) - \varphi(Z^1_1,Y^2_2)\right) \right].
    \end{multline*}
    Conditioning on $(Z^1_1,Z^2_1)$ in the first expectation and using the independence of $(Y^1_1,Z^2_2)$ and $(Z^1_1,Y^2_1,Y^2_2)$ to handle the second expression, we get
    \begin{equation*}
        \Eo\left[ \Dd\Dt \right] = 0.
    \end{equation*}
    Similarly, notice that
    \[ \Eo\left[ \Dt\Dq \right] = \Eo\left[ \left( \varphi(Z^1_1,Y^2_1) - \varphi(Z^1_1,Y^2_2) \right)\left( \varphi(Z^1_1,Z^2_1) - \varphi(Z^1_1,Z^2_2) \right) \right]. \]
    By conditioning on $(Z^1_1,Z^2_1, Z^2_2)$, we get
    \begin{equation*}
        \Eo\left[ \Dt\Dq \right] = 0.
    \end{equation*}
    Then Eq.\,\eqref{eq:dec_mom2} becomes
    \begin{multline}\label{eq:dec_mom2_int}
        \Eo\left[ \left|\fp(X_1,X_2)\right|^2 \big] = 2\big[ (\eta^1+\lambda^1)(\eta^1+\lambda^2) - (\eta^1)^2 \right]I^T_{\delta} 
        \\
        + 2\big[ (\eta^1+\lambda^1)^2(\eta^1+\lambda^2) - (\eta^1)^2(\eta^1+2\lambda^1) \big]J^T_{\delta} + \Eo\big[|\Dq|^2\big] + 2\Eo\big[ \Dd\Dq \big].
    \end{multline}
    The computation of the expectations of $\Dd\Dq$ and $|\Dq|^2$ needs another representation of the number of coincidences, different of what we have used in \eqref{eq:coinc}, since now the two processes that are involved are not independent any more. 
    Recalling the definition of $ w $ and of $ \tw $ above, we have: 
    \begin{lemma}
        \begin{multline}\label{eq:newcoinc}
            \varphi(Z^1_1,Z^2_1) = \int_{\R} w(u,u+\xi_{Z^1_1(u-)+1})\dd Z^1_1(u) 
            \\
            + \int_{\Delta_2} w(u_1,u_2+\xi_{Z^1_1(u_2-)+1}) \dd Z^1_1(u_1) \dd Z^1_1(u_2).
        \end{multline}
        In particular  
        \begin{equation}\label{eq:expectationvarphiz}
        \Eo   \left[  \varphi(Z^1_1,Z^2_1) \right] = \eta^1 \Eo \left[ (T- |\xi | ) \ind_{ | \xi | \le \delta }  \right] + ( \eta^1 )^2 I_\delta^T, 
        \end{equation}
        and 
        \begin{multline}\label{eq:condDq}
            \Eo\left[ \Dq \big|Z^1_1,Z^2_1\right] = \int_{\R} \left[ w(u, u+\xi_{Z^1_1(u-)+1}) - \eta^1\tw(u)\right]\dd Z^1_1(u) 
            \\
            + \int_{\Delta_2}w(u_1,u_2+\xi_{Z^1_1(u_2-)+1})\dd Z^1_1(u_1)\dd Z^1_1(u_2).
        \end{multline}
    \end{lemma}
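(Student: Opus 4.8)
The plan is to establish the three identities in sequence, all resting on the observation that each point of $Z^2_1$ is the image of a unique point of $Z^1_1$ under its own jitter. For the representation \eqref{eq:newcoinc}, I would rewrite the coincidence count as a double sum over the points of $Z^1_1$. Since the points of $Z^2_1$ are exactly $\{u_2+\xi_{Z^1_1(u_2-)+1}:u_2\in Z^1_1\}$ and since $\varphi$ already encodes the restriction to $[0,T]$ through the kernel $w$, one has
\begin{equation*}
\varphi(Z^1_1,Z^2_1)=\sum_{u_1\in Z^1_1}\sum_{u_2\in Z^1_1} w\big(u_1,u_2+\xi_{Z^1_1(u_2-)+1}\big).
\end{equation*}
Splitting this into its diagonal part $u_1=u_2$ and its off-diagonal part $u_1\neq u_2$, and writing each as an integral against $\dd Z^1_1$ (the off-diagonal one against $\Delta_2$), gives exactly \eqref{eq:newcoinc}. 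This step is essentially bookkeeping; the only delicate point is the indexing convention for the marks, which is why the jitter attached to the point $u$ is written $\xi_{Z^1_1(u-)+1}$, in agreement with the measurability remark following Lemma~\ref{lem:Poisson}.

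For \eqref{eq:expectationvarphiz} I would take expectations of the two integrals in \eqref{eq:newcoinc}. The diagonal term is handled by the standard marked Campbell formula (the $l=1$ analogue of Lemma~\ref{lem:Poisson}), giving $\eta^1\,\Eo\big[\int_{\R}w(u,u+\xi)\dd u\big]$; since $w(u,u+\xi)=\ind_{|\xi|\leq\delta}\ind_{u\in[0,T]}\ind_{u+\xi\in[0,T]}$ and $|\xi|\leq\delta\leq T/2$, the inner integral equals $(T-|\xi|)\ind_{|\xi|\leq\delta}$, yielding the first term. The off-diagonal term is handled by Lemma~\ref{lem:Poisson} with $l=2$, giving $(\eta^1)^2\,\Eo\big[\int_{\R^2}w(u_1,u_2+\xi)\dd u_1\dd u_2\big]$; the change of variable $v=u_2+\xi$ at fixed $\xi$ turns the inner integral into $\int_{\R}\tw(u_1)\dd u_1=I^T_{\delta}$, independent of $\xi$, producing the second term $(\eta^1)^2 I^T_{\delta}$.

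For \eqref{eq:condDq} I would use that $\Dq=\varphi(Z^1_1,Z^2_1)-\varphi(Z^1_1,Z^2_2)$, where $Z^2_2$ comes from the second, independent trial and is thus a Poisson process of rate $\eta^1$ independent of $(Z^1_1,Z^2_1)$ (by Proposition~\ref{prop:z2estunpoisson}). As $\varphi(Z^1_1,Z^2_1)$ is $\sigma(Z^1_1,Z^2_1)$-measurable, only the second term needs averaging: since $Z^1_1\indep Z^2_2$, the representation \eqref{eq:coinc} is licit, and taking the conditional expectation given $Z^1_1$ gives $\eta^1\int_{\R}\tw(u)\dd Z^1_1(u)$. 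Substituting \eqref{eq:newcoinc} for $\varphi(Z^1_1,Z^2_1)$ and merging the two single integrals against $\dd Z^1_1$ then yields \eqref{eq:condDq}.

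I expect the main obstacle to be conceptual rather than computational: one must deploy two different integral representations of the coincidence count depending on whether the two trains are independent. For $\varphi(Z^1_1,Z^2_2)$ the two processes are independent, so the non-predictability of the integrand flagged in the remark after \eqref{eq:coinc} is harmless and \eqref{eq:coinc} applies directly. For $\varphi(Z^1_1,Z^2_1)$, by contrast, the second train is a deterministic jittered copy of the first, so \eqref{eq:coinc} is unavailable and one must instead use the mark-based representation \eqref{eq:newcoinc} together with Lemma~\ref{lem:Poisson}. Keeping the bookkeeping of the marks correct — in particular that the jitter of $u_2$ is $\xi_{Z^1_1(u_2-)+1}$ and, in the off-diagonal term, is independent of the location $u_1$ so that the change of variable decouples cleanly — is where the care is genuinely needed.
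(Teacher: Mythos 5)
Your proposal is correct and follows essentially the same route as the paper: the double-sum (equivalently, double stochastic integral) representation split into diagonal and off-diagonal parts for \eqref{eq:newcoinc}, Lemma~\ref{lem:Poisson} for the expectation \eqref{eq:expectationvarphiz}, and conditioning only the $\varphi(Z^1_1,Z^2_2)$ term via the independent-case representation \eqref{eq:coinc} for \eqref{eq:condDq}. The points you flag as delicate (the mark indexing $\xi_{Z^1_1(u_2-)+1}$ and the fact that \eqref{eq:coinc} may only be used when the two trains are independent) are exactly the ones the paper is careful about.
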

    
    \begin{proof}
        Equation \eqref{eq:newcoinc} 
        follows from the representation 
        \[ \varphi(Z^1_1,Z^2_1) = \int_{\R^2}w(u_1,u_2+\xi_{Z^1_1(u_2-)+1})\dd Z^1_1(u_1)\dd Z^1_1(u_2), \]
        where we distinguished the case for which $u_1=u_2$ from the case $ u_1 \neq u_2$. 
         Using Lemma~\ref{lem:Poisson} and the arguments of section~\ref{subsectionB1}, the equation \eqref{eq:expectationvarphiz} follows immediately.

        Finally, we have
        \begin{align*}
            \Eo\left[\Dq\big|Z^1_1,Z^2_1\right] & = \Eo\left[ \varphi(Z^1_1,Z^2_1) - \varphi(Z^1_1,Z^2_2) \big| Z^1_1,Z^2_1 \right]
            \\
            & = \varphi(Z^1_1,Z^2_1) - \Eo\left[\varphi(Z^1_1,Z^2_2) \big|Z^1_1 \right]
            \\
            & = \varphi(Z^1_1,Z^2_1) - \eta^1\int_{\R}\tw(u)\dd Z^1_1(u),
        \end{align*}
        which implies \eqref{eq:condDq} thanks to \eqref{eq:newcoinc}.
    \end{proof}
    
    \noindent Using \eqref{eq:cross_indep} in Lemma~\ref{lem:mom2_indep} and equation \eqref{eq:expectationvarphiz}, we get
    \begin{align*}
        \Eo\left[ \Dd\Dq \right] & = \Eo\left[ \left( \varphi(Y^1_1,Z^2_1) - \varphi(Y^1_1,Z^2_2) \right)\Dq \right] 
        \\
        & = \Eo\left[ \varphi(Y^1_1,Z^2_1)\Dq \right] - \Eo\left[\varphi(Y^1_1,Z^2_2)\right]\Eo\left[\varphi(Z^1_1,Z^2_1)\right] \\
        & \quad \quad \quad \quad \quad \quad \quad \quad \quad \quad \quad \quad + \Eo\left[\varphi(Y^1_1,Z^2_2)\varphi(Z^1_1,Z^2_2)\right]
        \\
        & = \Eo\left[ \varphi(Y^1_1,Z^2_1)\Dq \right] - \lambda^1\eta^1 I^T_{\delta}\left( (\eta^1)^2I^T_{\delta} + \eta^1\E\big[ (T-|\xi|)\ind_{|\xi|\leq\delta} \big] \right) 
        \\
        &~ + (\eta^1)^2\lambda^1J^T_{\delta} + (\eta^1)^3\lambda^1(I^T_{\delta})^2
    \end{align*}
    and then,
    \begin{equation}\label{eq:D24}
        \Eo\left[ \Dd\Dq \right] = \Eo\left[ \varphi(Y^1_1,Z^2_1)\Dq \right] + (\eta^1)^2\lambda^1J^T_{\delta} - \lambda^1(\eta^1)^2I^T_{\delta}\E\left[ (T-|\xi|)\ind_{|\xi|\leq\delta} \right].
    \end{equation}
    Conditioning on $(Z^1_1,Z^2_1)$ and using \eqref{eq:condDq}, we obtain 
    \begin{align*}
        & \Eo\left[ \varphi(Y^1_1,Z^2_1)\Dq \right] 
        \\
        & = \Eo\left\{ \Eo\left[\varphi(Y^1_1,Z^2_1)\big|Z^1_1 \right]\Eo\left[\Dq\big|Z^1_1,Z^2_1\right] \right\}
        \\
        & = \lambda^1\Eo\bigg\{ \left( \int_{\R}\tw(u+\xi_{Z^1_1(u-)+1})\dd Z^1_1(u) \right)
        \\
        &~ \times \left( \int_{\R} \left[ w(u, u+\xi_{Z^1_1(u-)+1}) - \eta^1\tw(u)\right]\dd Z^1_1(u)\right. \\
        &\left. \quad \quad \quad + \int_{\Delta_2}w(u_1,u_2+\xi_{Z^1_1(u_2-)+1})\dd Z^1_1(u_1)\dd Z^1_1(u_2) \right) \bigg\}
        \\
        & = \lambda^1\E\bigg\{ \int_{\R^2}\left[ w(u_1, u_1+\xi_{Z^1_1(u_1-)}) - \eta^1\tw(u_1)\right]\tw(u_2+\xi_{Z^1_1(u_2-)})\dd Z^1_1(u_1)\dd Z^1_1(u_2)
        \\
        &~ + \int_{\R} \left( \int_{\Delta_2}w(u_1,u_2+\xi_{Z^1_1(u_2-)+1})\tw(u_3+\xi_{Z^1_1(u_3-)+1})\dd Z^1_1(u_1)\dd Z^1_1(u_2) \right) \dd Z^1_1(u_3) \bigg\},
    \end{align*}
    
    which implies, by distinguishing common jump times, 
    \begin{align*}
        & \Eo\left[ \varphi(Y^1_1,Z^2_1)\Dq \right] = \lambda^1\E\bigg\{ \int_{\R}\left[ w(u, u+\xi_{Z^1_1(u-)+1}) - \eta^1\tw(u)\right]\tw(u+\xi_{Z^1_1(u-)+1})\dd Z^1_1(u)
        \\
        &~ + \int_{\Delta_2}\left[ w(u_1, u_1+\xi_{Z^1_1(u_1-)+1}) - \eta^1\tw(u_1)\right]\tw(u_2+\xi_{Z^1_1(u_2-)+1})\dd Z^1_1(u_1)\dd Z^1_1(u_2)
        \\
        &~ + \int_{\Delta_2}w(u_1,u_2+\xi_{Z^1_1(u_2-)+1})\tw(u_1+\xi_{Z^1_1(u_1-)+1})\dd Z^1_1(u_1)\dd Z^1_1(u_2)
        \\
        &~ + \int_{\Delta_2}w(u_1,u_2+\xi_{Z^1_1(u_2-)+1})\tw(u_2+\xi_{Z^1_1(u_2-)+1})\dd Z^1_1(u_1)\dd Z^1_1(u_2)
        \\
        &~ + \int_{\Delta_3}w(u_1,u_2+\xi_{Z^1_1(u_2-)+1})\tw(u_3+\xi_{Z^1_1(u_3-)+1})\dd Z^1_1(u_1)\dd Z^1_1(u_2)\dd Z^1_1(u_3) \bigg\}.
    \end{align*}
    
    Then it follows from Lemma~\ref{lem:Poisson} that
    \begin{align*}
        \Eo\left[ \varphi(Y^1_1,Z^2_1)\Dq \right] & = \lambda^1\E\bigg\{ \eta^1 \int_{\R}\left[ w(u, u+\xi) - \eta^1\tw(u)\right]\tw(u+\xi)\dd u
        \\
        &~ + (\eta^1)^2\int_{\R^2}\left[ w(u_1, u_1+\xi_1) - \eta^1\tw(u_1)\right]\tw(u_2+\xi_{2})\dd u_1\dd u_2
        \\
        &~ + (\eta^1)^2\int_{\R^2}w(u_1,u_2+\xi_1)\tw(u_1+\xi_{2})\dd u_1\dd u_2
        \\
        &~ + (\eta^1)^2\int_{\R^2}w(u_1,u_2+\xi)\tw(u_2+\xi)\dd u_1\dd u_2
        \\
        &~ + (\eta^1)^3\int_{\R^3}w(u_1,u_2+\xi_{2})\tw(u_3+\xi_{3})\dd u_1\dd u_2\dd u_3 \bigg\}
        \\
        & = \lambda^1\E\bigg\{ \eta^1 \int_{\R}\left[ w(u, u+\xi) - \eta^1\tw(u)\right]\tw(u+\xi)\dd u 
        \\
        &~ + (\eta^1)^2I^T_{\delta}\left[ \int_{\R}w(u_1,u_1+\xi_1)\dd u_1 - \eta^1 I^T_{\delta} \right]
        \\
        &~ + (\eta^1)^2\int_{\R}\tw(u_1)\tw(u_1+\xi_2)\dd u_1 + (\eta^1)^2J^T_{\delta} + (\eta^1)^3(I^T_{\delta})^2 \bigg\}
    \end{align*}
    which finally gives
    \begin{align*}
        \Eo\left[ \varphi(Y^1_1,Z^2_1)\Dq \right] & = \lambda^1(\eta^1)^2J^T_{\delta} + \lambda^1(\eta^1)^2I^T_{\delta}\E\big[ (T-|\xi|)\ind_{|\xi|\leq\delta} \big] 
        \\
        &~ + \lambda^1\eta^1\E\bigg\{ \int_{\R}w(u, u+\xi)\tw(u+\xi)\dd u \bigg\}.
    \end{align*}
    Using this equation in \eqref{eq:D24}, we obtain
    \begin{equation}\label{eq:DdDq}
        \Eo\left[ \Dd\Dq \right] = 2\lambda^1(\eta^1)^2J^T_{\delta} + \lambda^1\eta^1\E\bigg\{ \int_{\R}w(u, u-\xi)\tw(u)\dd u \bigg\}.
    \end{equation}
    Let us now focus on the square moment of $\Dq.$ We have the decomposition
    \begin{equation}
        \Eo\left[ |\Dq|^2 \right] = \Eo\left[ \left|\Dq - \Eo\big[\Dq\big|Z^1_1,Z^2_1\big]\right|^2 \right] + \Eo\left[ \left|\E\left(\Dq\big|Z^1_1,Z^2_1\right)\right|^2 \right].
    \end{equation}
    Noticing that
    \[ \Dq - \Eo\big[\Dq\big|Z^1_1,Z^2_1\big] = \eta^1\int_{\R}\tw(u)\dd Z^1_1(u) - \varphi(Z^1_1,Z^2_2), \]
    it follows that
    \begin{align*}
    \Eo\left[ \left|\Dq - \Eo\big[\Dq\big|Z^1_1,Z^2_1\big]\right|^2 \right] & = \Eo\left\{ \left| \int_{\R}\left[ \eta^1\tw(u) - \int_{\R}w(u,v)\dd Z^2_2(v)\right]\dd Z^1_1(u) \right|^2 \right\} 
    \\
    & = \eta^1\Eo\left\{ \int_{\R}\left[ \eta^1\tw(u) - \int_{\R}w(u,v)\dd Z^2_2(v) \right]^2\dd u \right\}
    \\
    &~ + (\eta^1)^2\Eo\left\{ \left| \eta^1 I^T_{\delta} - \int_{\R}\tw(u)\dd Z^2_2(v) \right|^2 \right\} .
    \end{align*}
In the above expression, both squares involve a centered random variable. Developing the integral with respect to $Z^2_2$ in both cases, distinguishing once more common jumps, and finally using that $ w^2 (u,v)  = w (u,v), $     
    we end up with 
    \begin{equation}\label{eq:cond_centered}
        \Eo\left[ \left|\Dq - \Eo\big[\Dq\big|Z^1_1,Z^2_1\big]\right|^2 \right] = (\eta^1)^2I^T_{\delta} + (\eta^1)^3J^T_{\delta}.
    \end{equation}
    Further, we deduce from \eqref{eq:condDq} that 
    \begin{align*}
        & \Eo\left[ \left|\Eo\big[\Dq\big|Z^1_1,Z^2_1\big]\right|^2 \right] 
        \\
        &\quad = \Eo\bigg\{ \bigg| \int_{\R} \left[ w(u, u+\xi_{Z^1_1(u-)+1}) - \eta^1\tw(u)\right]\dd Z^1_1(u)\\
        & \quad \quad \quad \quad \quad \quad \quad \quad \quad \quad \quad \quad \quad + \int_{\Delta_2}w(u_1,u_2+\xi_{Z^1_1(u_2-)+1})\dd Z^1_1(u_1)\dd Z^1_1(u_2) \bigg|^2\bigg\}
        \\
        &\quad = \Eo\bigg\{ \bigg| \int_{\R} \left[ w(u, u+\xi_{Z^1_1(u-)+1}) - \eta^1\tw(u)\right]\dd Z^1_1(u) \bigg|^2 
        \\
        &\quad~ + 2\int_{\R}\Big[ \int_{\Delta_2}w(u_1,u_2+\xi_{Z^1_1(u_2-)+1})\left[ w(u_3, u_3+\xi_{Z^1_1(u_3-)+1}) - \eta^1\tw(u_3)\right]\dd Z^1_1(u_1)\\
        & \quad \quad \quad \quad \quad \quad \quad \quad \quad \quad \quad \quad \quad \quad \quad \quad \quad \quad \quad \quad \quad \quad \quad \quad \quad \quad \dd Z^1_1(u_2)\Big] \dd Z^1_1(u_3)
        \\
        &\quad~ + \bigg| \int_{\Delta_2}w(u_1,u_2+\xi_{Z^1_1(u_2-)+1})\dd Z^1_1(u_1)\dd Z^1_1(u_2) \bigg|^2\bigg\}.
    \end{align*}
    
    By distinguishing the common jumps and using Lemma~\ref{lem:Poisson} and once more that $ w^2 = w$, we obtain
    \begin{align*}
        & \Eo\bigg\{ \bigg| \int_{\R} \left[ w(u, u+\xi_{Z^1_1(u-)+1}) - \eta^1\tw(u)\right]\dd Z^1_1(u) \bigg|^2\bigg\} 
        \\
        & \quad = \E\bigg\{ \int_{\R} \left[ w(u, u+\xi_{Z^1_1(u-)+1}) - \eta^1\tw(u)\right]^2\dd Z^1_1(u)
        \\
        & \quad~ + \int_{\Delta_2} \left[ w(u_1, u_1+\xi_{Z^1_1(u_1-)+1}) - \eta^1\tw(u_1)\right]\left[ w(u_2, u_2+\xi_{Z^1_1(u_2-)+1}) - \eta^1\tw(u_2)\right]\\
        & \quad \quad \quad \quad \quad \quad \quad \quad \quad \quad \quad \quad \quad \quad \quad \quad \quad \quad \quad \quad \quad \quad \quad \quad \quad \quad \quad \dd Z^1_1(u_1)\dd Z^1_1(u_2) \bigg\}
        \\
        & \quad = \E\bigg\{ \eta^1\int_{\R} \left[ w(u, u+\xi) - \eta^1\tw(u)\right]^2\dd u
        \\
        & \quad~ + (\eta^1)^2\int_{\R^2} \left[ w(u_1, u_1+\xi_1) - \eta^1\tw(u_1)\right]\left[ w(u_2, u_2+\xi_2) - \eta^1\tw(u_2)\right]\dd u_1\dd u_2 \bigg\}
        \\
        &\quad = \eta^1\E\bigg\{ \int_{\R}w(u,u+\xi)\dd u - 2\eta^1\int_{\R}w(u,u+\xi)\tw(u)\dd u + (\eta^1)^2J^T_{\delta} \bigg\}
        \\
        &\quad~ + (\eta^1)^2\bigg\{ \left( \E\int_{\R}w(u,u+\xi)\dd u \right)^2 - 2\eta^1 I^T_{\delta}\E\int_{\R}w(u,u+\xi)\dd u + (\eta^1)^2(I^T_{\delta})^2  \bigg\}
    \end{align*}
    and therefore 
    \begin{multline}\label{eq:dec_term1}
        \Eo\bigg\{ \bigg| \int_{\R} \left[ w(u, u+\xi_{Z^1_1(u-)+1}) - \eta^1\tw(u)\right]\dd Z^1_1(u) \bigg|^2\bigg\} \\
        = (\eta^1)^3J^T_{\delta} + (\eta^1)^4(I^T_{\delta})^2 + (\eta^1)^2\left( \E\big[ (T-|\xi|)_{|\xi|\leq\delta} \big] \right)^2
        + \eta^1(1 - 2(\eta^1)^2I^T_{\delta})\E\big[ (T-|\xi|)_{|\xi|\leq\delta} \big] \\
        - 2(\eta^1)^2\E\int_{\R}w(u,u+\xi)\tw(u)\dd u.
    \end{multline}
    Similarly, 
    \begin{align*}
        & \Eo\bigg\{ \int_{\R} \Big[ \int_{\Delta_2}w(u_1,u_2+\xi_{Z^1_1(u_2-)+1})\left[ w(u_3, u_3+\xi_{Z^1_1(u_3-)+1}) - \eta^1\tw(u_3)\right]\dd Z^1_1(u_1)\\
        & \quad \quad \quad \quad \quad \quad \quad \quad \quad \quad\quad \quad\quad \quad\quad \quad\quad \quad \quad\quad \quad \quad \quad \quad \quad \quad \dd Z^1_1(u_2) \Big] \dd Z^1_1(u_3) \bigg\}
        \\
        &~ = \Eo\bigg\{ \int_{\Delta_2}w(u_1,u_2+\xi_{Z^1_1(u_2-)+1})\left[ w(u_1, u_1+\xi_{Z^1_1(u_1-)+1}) - \eta^1\tw(u_1)\right]\dd Z^1_1(u_1)\dd Z^1_1(u_2)
        \\
        &\quad + \int_{\Delta_2}w(u_1,u_2+\xi_{Z^1_1(u_2-)+1})\left[ w(u_2, u_2+\xi_{Z^1_1(u_2-)+1}) - \eta^1\tw(u_2)\right]\dd Z^1_1(u_1)\dd Z^1_1(u_2)
        \\
        &\quad + \int_{\Delta_3}w(u_1,u_2+\xi_{Z^1_1(u_2-)+1})\left[ w(u_3, u_3+\xi_{Z^1_1(u_3-)+1}) - \eta^1\tw(u_3)\right]\dd Z^1_1(u_1) \\
        & \quad \quad \quad \quad \quad \quad \quad \quad\quad \quad\quad \quad\quad \quad\quad \quad\quad \quad \quad\quad \quad\quad \quad \quad \quad \quad \dd Z^1_1(u_2)\dd Z^1_1(u_3) \bigg\}
        \\
        &~ = \Eo\bigg\{ (\eta^1)^2\int_{\R^2}w(u_1,u_2+\xi_2)\left[ w(u_1, u_1+\xi_1) - \eta^1\tw(u_1)\right]\dd u_1\dd u_2
        \\
        &\quad + (\eta^1)^2\int_{\R^2}w(u_1,u_2+\xi)\left[ w(u_2, u_2+\xi) - \eta^1\tw(u_2)\right]\dd u_1\dd u_2
        \\
        &\quad + (\eta^1)^3\int_{\R^3}w(u_1,u_2+\xi_2)\left[ w(u_3, u_3+\xi_3) - \eta^1\tw(u_3)\right]\dd u_1\dd u_2\dd u_3 \bigg\} ,
    \end{align*}
    from which we deduce that
    \begin{align*}
        & \Eo\bigg\{ \int_{\R}\int_{\Delta_2}w(u_1,u_2+\xi_{Z^1_1(u_2-)+1})\left[ w(u_3, u_3+\xi_{Z^1_1(u_3-)+1}) - \eta^1\tw(u_3)\right]\dd Z^1_1(u_1)\\
        & \quad \quad \quad \quad \quad \quad \quad \quad \quad \quad\quad \quad\quad \quad\quad \quad\quad \quad \quad\quad \quad \quad \quad \quad \quad \quad \dd Z^1_1(u_2)\dd Z^1_1(u_3) \bigg\}
        \\
        &~ = (\eta^1)^2\Eo\bigg\{ \int_{\R}\tw(u)w(u,u+\xi)\dd u - \eta^1 J^T_{\delta} + \int_{\R}\tw(u+\xi)\left[ w(u, u+\xi) - \eta^1\tw(u)\right]\dd u
        \\
        &\quad \quad \quad \quad \quad+ \eta^1 I^T_{\delta}\left[ \int_{\R}w(u, u+\xi)\dd u - \eta^1 I^T_{\delta}\right] \bigg\}
    \end{align*}
    and then
    \begin{multline}\label{eq:dec_term2}
        \Eo\bigg\{ \int_{\R}\int_{\Delta_2}w(u_1,u_2+\xi_{Z^1_1(u_2-)+1})\left[ w(u_3, u_3+\xi_{Z^1_1(u_3-)+1}) - \eta^1\tw(u_3)\right]\dd Z^1_1(u_1)\\
         \quad \quad \quad \quad \quad \quad \quad \quad \quad \quad\quad \quad\quad \quad\quad \quad\quad \quad \quad\quad \quad \quad \quad \quad \quad \quad \dd Z^1_1(u_2)\dd Z^1_1(u_3) \bigg\}
        \\
        = -(\eta^1)^3J^T_{\delta} - (\eta^1)^4(I^T_{\delta})^2 + (\eta^1)^3 I^T_{\delta}\E\big[ (T-|\xi|)\ind_{|\xi|\leq\delta} \big] 
        \\
        + (\eta^1)^2\E\int_{\R}\tw(u)\big[ w(u,u+\xi) + w(u,u-\xi) - \eta^1\tw(u+\xi) \big]\dd u.
    \end{multline}
    The last term we need to compute is
    \begin{align*}
        & \Eo\bigg\{\bigg| \int_{\Delta_2}w(u_1,u_2+\xi_{Z^1_1(u_2-)+1})\dd Z^1_1(u_1)\dd Z^1_1(u_2) \bigg|^2\bigg\}
        \\
        &~ = \Eo\bigg\{ \int_{\substack{(u_1,u_2)\in\Delta_2\\(u_3,u_4)\in\Delta_2}}  w(u_1,u_2+\xi_{Z^1_1(u_2-)+1})w(u_3,u_4+\xi_{Z^1_1(u_4-)+1})\dd Z^1_1(u_1)\dd Z^1_1(u_2)\\
        & \quad \quad \quad \quad \quad \quad \quad \quad \quad \quad\quad \quad\quad \quad\quad \quad\quad \quad \quad\quad \quad \quad \quad \quad \quad \quad \dd Z^1_1(u_3)\dd Z^1_1(u_4) \bigg\}.
    \end{align*}
    By distinguishing again the cases $u_i=u_j$ and $u_i\neq u_j$ for $j=1,\cdots,4$, we split this quantity in seven terms that can be explicitly computed. 
    
    The first term consists of taking the stochastic integral on the sub-domain $\{(u_1,u_2,u_3,u_4) : u_1=u_3, u_2=u_4\}$. The corresponding expectation is given by
    \begin{align*}
        K_1 & = \Eo\bigg\{ \int_{\Delta_2}  w^2(u_1,u_2+\xi_{Z^1_1(u_2-)+1})\dd Z^1_1(u_1)\dd Z^1_1(u_2) \bigg\}
        \\
        & = (\eta^1)^2\E\int_{\R^2}w^2 (u_1,u_2+\xi)\dd u_1\dd u_2 = (\eta^1)^2\E\int_{\R^2}w (u_1,u_2+\xi)\dd u_1\dd u_2 
        \\
        & = (\eta^1)^2I^T_{\delta},
    \end{align*}
    since $w^2 = w.$
    
    The second term consists of taking the stochastic integral on the sub-domain $\{(u_1,u_2,u_3,u_4) : u_1=u_4, u_2=u_3\}$. The corresponding expectation is given by
    \begin{align*}
        K_2 & = \Eo\bigg\{ \int_{\Delta_2}  w(u_1,u_2+\xi_{Z^1_1(u_2-)+1})w(u_2,u_1+\xi_{Z^1_1(u_1-)+1})\dd Z^1_1(u_1)\dd Z^1_1(u_2) \bigg\}
        \\
        & = (\eta^1)^2\Eo\bigg\{ \int_{\R^2}  w(u_1,u_2+\xi_2)w(u_2,u_1+\xi_1)\dd u_1\dd u_2  \bigg\}.
    \end{align*}
    
    The third term consists of taking the stochastic integral on the sub-domain $\{(u_1,u_2,u_3,u_4) : u_1=u_3, u_2\neq u_4\}$. The corresponding expectation is given by
    \begin{align*}
        K_3 & = \Eo\bigg\{ \int_{\Delta_3}  w(u_1,u_2+\xi_{Z^1_1(u_2-)+1})w(u_1,u_3+\xi_{Z^1_1(u_3-)+1})\dd Z^1_1(u_1)\dd Z^1_1(u_2)\dd Z^1_1(u_3) \bigg\}
        \\
        & = (\eta^1)^3\Eo\bigg\{ \int_{\R^3}  w(u_1,u_2+\xi_2)w(u_1,u_3+\xi_3)\dd u_1\dd u_2\dd u_3 \bigg\}
        \\
        & = (\eta^1)^3J^T_{\delta}.
    \end{align*}
    
    The fourth term consists of taking the stochastic integral on the sub-domain $\{(u_1,u_2,u_3,u_4) : u_1=u_4, u_2\neq u_3\}$. The corresponding expectation is given by
    \begin{align*}
        K_4 & = \Eo\bigg\{ \int_{\Delta_3}  w(u_1,u_2+\xi_{Z^1_1(u_2-)+1})w(u_3,u_1+\xi_{Z^1_1(u_1-)+1})\dd Z^1_1(u_1)\dd Z^1_1(u_2)\dd Z^1_1(u_3) \bigg\}
        \\
        & = (\eta^1)^3\Eo\bigg\{ \int_{\R^3}  w(u_1,u_2+\xi_2)w(u_3,u_1+\xi_1)\dd u_1\dd u_2\dd u_3 \bigg\}
        \\
        & = (\eta^1)^3\E\int_{\R}\tw(u)\tw(u+\xi)\dd u.
    \end{align*}
    
    The fifth term consists of taking the stochastic integral on the sub-domain $\{(u_1,u_2,u_3,u_4) : u_1\neq u_3, u_2=u_4\}$. The corresponding expectation $K_5$ is calculated in the same way the third term and it equals $ K_5 = K_3 =(\eta^1)^3J^T_{\delta}.  $ 
    
    The sixth term consists of taking the stochastic integral on the sub-domain $\{(u_1,u_2,u_3,u_4) : u_1\neq u_4, u_2=u_3\}$ and is treated in the same way as the fourth term, yielding the same expectation $ K_6 = K_4.$
      
    The seventh term consists of taking the stochastic integral on the sub-domain $\Delta_4$. The corresponding expectation is given by
    \begin{multline*}
        K_7  =\\
         \Eo\bigg\{ \int_{\Delta_4}  w(u_1,u_2+\xi_{Z^1_1(u_2-)+1})w(u_3,u_4+\xi_{Z^1_1(u_4-)+1})\dd Z^1_1(u_1)\dd Z^1_1(u_2)\dd Z^1_1(u_3)\dd Z^1_1(u_4) \bigg\}
        \\
        = (\eta^1)^4\Eo\bigg\{ \int_{\R^4}  w(u_1,u_2+\xi_2)w(u_3,u_4+\xi_4)\dd u_1\dd u_2\dd u_3\dd u_4 \bigg\}
        \\
         = (\eta^1)^4(I^T_{\delta})^4.
    \end{multline*}
    
    By adding $K_1,\cdots,K_7$, we obtain
    \begin{multline}\label{eq:dec_term3}
        \Eo\bigg\{\bigg| \int_{\Delta_2}w(u_1,u_2+\xi_{Z^1_1(u_2-)+1})\dd Z^1_1(u_1)\dd Z^1_1(u_2) \bigg|^2\bigg\} 
        \\
        = (\eta^1)^2I^T_{\delta} + 2(\eta^1)^3J^T_{\delta} + (\eta^1)^4(I^T_{\delta})^2 + 2(\eta^1)^3\E\int_{\R}\tw(u)\tw(u+\xi)\dd u
        \\
        +(\eta^1)^2\E\int_{\R^2}w(u_1+\xi_1,u_2)w(u_1,u_2+\xi_2)\dd u ,
    \end{multline}
    and the sum $\eqref{eq:cond_centered} +  \eqref{eq:dec_term1} + 2\times\eqref{eq:dec_term2} + \eqref{eq:dec_term3}$  gives
   \begin{multline}\label{eq:Dq2}
        \Eo\left[ | \Dq |^2 \right] \\
        = 2(\eta^1)^2I^T_{\delta} + 2(\eta^1)^3J^T_{\delta} + \eta^1\E\big[ (T-|\xi|)\ind_{|\xi|\leq\delta} \big] + (\eta^1)^2\left(\E\big[ (T-|\xi|)\ind_{|\xi|\leq\delta} \big]\right)^2
        \\
        + 2(\eta^1)^2\E\int_{\R}\tw(u)w(u,u-\xi)\dd u + (\eta^1)^2\E\int_{\R^2}w(u_1+\xi_1,u_2)w(u_1,u_2+\xi_2)\dd u_1\dd u_2.
    \end{multline}

    Then using \eqref{eq:DdDq} and \eqref{eq:Dq2}, we deduce from the decomposition \eqref{eq:dec_mom2_int} that
    \begin{align*}
        & \Vo\big[ \fp(X_1,X_2) \big] \\
        & \; = \E\big[ |\fp(X_1,X_2)|^2 \big] - (\eta^1)^2\left( \E\big[(T-|\xi|)\ind_{|\xi|\leq\delta}\big]\right)^2
        \\
        & \; = 2\big[ (\eta^1+\lambda^1)(\eta^1+\lambda^2) - (\eta^1)^2 \big]I^T_{\delta} + 2\big[ (\eta^1+\lambda^1)^2(\eta^1+\lambda^2) - (\eta^1)^2(\eta^1+2\lambda^1) \big]J^T_{\delta}
        \\
        &~\;  + 2(\eta^1)^2I^T_{\delta} + 2(\eta^1)^3J^T_{\delta} + \eta^1\E\big[ (T-|\xi|)\ind_{|\xi|\leq\delta} \big] 
        \\
        &~ \; + 2(\eta^1)^2\E\int_{\R}\tw(u)w(u,u-\xi)\dd u + (\eta^1)^2\E\int_{\R^2}w(u_1+\xi_1,u_2)w(u_1,u_2+\xi_2)\dd u_1\dd u_2
        \\
        &~ \; + 4\lambda^1(\eta^1)^2J^T_{\delta} + 2\lambda^1\eta^1\E\bigg\{ \int_{\R}\tw(u)w(u,u-\xi)\dd u \bigg\}.
    \end{align*}
    It follows that
    \begin{align*}
        \Vo\big[ \fp(X_1,X_2) \big] & = 2(\eta^1+\lambda^1)(\eta^1+\lambda^2)\left[ I^T_{\delta} + (\eta^1+\lambda^1)J^T_{\delta} \right]
        \\
        &~ + \eta^1\E\big[ (T-|\xi|)\ind_{|\xi|\leq\delta} \big] + 2\eta^1(\eta^1+\lambda^1)\E\bigg\{ \int_{\R}\tw(u)w(u,u-\xi)\dd u \bigg\}
        \\
        &~ + (\eta^1)^2\E\bigg\{\int_{\R^2}w(u_1 ,u_2)w(u_1 - \xi_1  ,u_2+\xi_2)\dd u_1\dd u_2\bigg\},
    \end{align*}
    and then using the bounds $\eta^1\leq\lamb=\max\{\lambda^1,\lambda^2\}$, $I^T_{\delta}\leq 2\delta T$, $J^T_{\delta}\leq 4\delta^2T, $ $w(u,u-\xi)\leq\ind_{|\xi|\leq\delta}$ and $w(u_1 - \xi_1  ,u_2+\xi_2) \le 1, $ we obtain, for a constant $C$ not depending on the model parameters, 
    \begin{align*}
        \Vo\big[ \fp(X_1,X_2) \big] & \leq 2(\eta^1+\lambda^1)(\eta^1+\lambda^2)\left[ I^T_{\delta} + (\eta^1+\lambda^1)J^T_{\delta} \right]
        \\
        &~ + \eta^1 T\eP(|\xi|\leq\delta) + 2\eta^1\lamb I^T_{\delta}\eP(|\xi|\leq\delta) + 3 (\eta^1)^2I^T_{\delta}
        \\
        & \leq C (1+\lamb\delta)\big( (\lamb)^2\delta T + \eta^1 T\eP(|\xi|\leq\delta) \big),
    \end{align*}
    which implies \eqref{eq:var_jit}. 
    
    \subsection{Proof of the lower bounds \texorpdfstring{\eqref{eq:crit_jit}}{} and \texorpdfstring{\eqref{eq:min_size_jit}}{}}\label{app:lower_bound_jit}
    At this stage, we have proved, combining all the inequalities on the various variances and Lemma~\ref{lem:critgen}, that there exists a constant $c>0$ such that if 
    \[
\Delta_\varphi \geq \frac{c}{\sqrt{n\alpha\beta}}\sqrt{(1+\lamb\delta)((\lamb)^2\delta T + \eta^1 T\eP(|\xi|\leq\delta))},
    \]
    then the Type II error is controlled.
    But since we can upper bound 
    \[
    \eta^1 T\eP(|\xi|\leq\delta)\leq 2 \Delta_\varphi
    \]
    (recall \eqref{eq:deltaphilb}),
    it is sufficient to have
    \[
    \Delta_\varphi \geq \frac{c}{\sqrt{n\alpha\beta}}\sqrt{(1+\lamb\delta)((\lamb)^2\delta T + 2 \Delta_\varphi)}.
    \]
    It remains to solve this in $\Delta_\varphi$. With an additional upper bound, one can show that there exists another constant $C>0$ such that the Type II error is controlled as soon as
    \[
    \Delta_\varphi \geq C\left[\sqrt{\frac{(1+\lamb\delta)(\lamb)^2\delta T}{n\alpha\beta}} + \frac{1+\lamb\delta}{n\alpha\beta}\right],
    \]
    which concludes the proof of \eqref{eq:crit_jit}.
    
    It remains to solve the second order polynomial in $x=1/\sqrt{n}$ of \eqref{eq:crit_jit} and to lower bound $\Delta_\varphi$ by $\eta^1(T/2) \eP (|\xi|\leq\delta)$,
    to obtain that \eqref{eq:crit_jit} is implied by
    \[
    n\geq C \frac{1+\lamb \delta}{\alpha\beta \eta^1 T \eP(|\xi|\leq\delta)} \left[1+ \frac{(\lamb)^2\delta}{\eta^1\eP(|\xi|\leq\delta)}\right].
    \]
     Since all the reasoning is valid only if $n\geq 3/\sqrt{\alpha\beta}$, we obtain \eqref{eq:min_size_jit}.

\section{The Hawkes model}
    \subsection{Proof of Lemma~\ref{lem:cumulant}}
        The first step of the proof consists of computing the rate $\Psi(t)$ defined by \eqref{eq:Psi_Hawkes}. For the interaction function $h(t) = ae^{-bt}\ind_{t>0}$, clearly for any $m\geq 1,$ 
        \[ h^{\ast m}(t) = \frac{a^m t^{m-1}}{(m-1)!}e^{-b t}\ind_{t>0} \]
        which implies that
        \begin{equation}\label{eq:Psi_exp}
            \Psi(t) = a e^{-(b-a)t}\ind_{t>0}= a e^{-b(1- \ell)t}\ind_{t>0}.
        \end{equation}
        We state the following intermediate result which will be used in our calculations later.
        
        \begin{lemma}\label{lem:bound_integral}
            Let $\alpha_1<\cdots<\alpha_m$ be a sequence of real numbers with $m\geq 2$, then 
            \begin{equation}\label{eq:bound_integral}
                \int_{\R}\prod_{i=1}^m\Psi(\alpha_i-v)\dd v = \frac{a}{m(b-a)}\prod_{i=2}^m\Psi(\alpha_i-\alpha_1).
            \end{equation}
            In addition, for any $\alpha\in\R$,
            \begin{equation}\label{eq:bound_integral_abs}
                \int_{\R}\Psi(|\alpha-v|)\prod_{i=1}^m\Psi(\alpha_i-v)\dd v \leq \frac{a}{(m-1)(b-a)}\Psi(|\alpha-\alpha_1|)\prod_{i=2}^m\Psi(\alpha_i-\alpha_1).
            \end{equation}
        \end{lemma}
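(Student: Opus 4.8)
The plan is to exploit the explicit exponential form of $\Psi$ recorded in \eqref{eq:Psi_exp}, namely $\Psi(t) = a e^{-(b-a)t}\ind_{t>0}$ with $b-a>0$ (equivalently $\ell<1$, which is what makes every integral converge), so that both identities reduce to elementary one-dimensional exponential integrals. The first observation I would make is that the product $\prod_{i=1}^m \Psi(\alpha_i - v)$ is supported on $\{v < \alpha_1\}$: each factor carries the indicator $\ind_{v < \alpha_i}$, and since $\alpha_1 = \min_i \alpha_i$, the product vanishes unless $v < \alpha_1$. On that half-line every factor is a genuine exponential, so the integrand collapses to $a^m e^{-(b-a)(\sum_i \alpha_i - mv)}$.

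For the first identity I would simply integrate this exponential over $(-\infty,\alpha_1)$. Pulling out the $v$-independent factor $a^m e^{-(b-a)\sum_i \alpha_i}$ and using $\int_{-\infty}^{\alpha_1} e^{(b-a)mv}\dd v = \frac{1}{(b-a)m} e^{(b-a)m\alpha_1}$, the exponent reorganizes as $-(b-a)\sum_{i=2}^m(\alpha_i - \alpha_1)$, the $i=1$ term dropping out. Recognizing $a^{m-1}e^{-(b-a)\sum_{i=2}^m(\alpha_i-\alpha_1)} = \prod_{i=2}^m \Psi(\alpha_i - \alpha_1)$ then produces exactly the claimed constant $\frac{a}{m(b-a)}$. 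This part is routine and presents no real difficulty.

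For the second inequality, the crux — and the step I expect to be the main obstacle — is a pointwise factorization coming from the triangle inequality. On $\{v < \alpha_1\}$ I would isolate the factor $\Psi(\alpha_1 - v)$ from the product and pair it with $\Psi(|\alpha - v|)$. Writing both as exponentials, the product $\Psi(|\alpha - v|)\,\Psi(\alpha_1 - v)$ equals $a^2 e^{-(b-a)(|\alpha - v| + (\alpha_1 - v))}$; since $v<\alpha_1$ gives $|v-\alpha_1| = \alpha_1 - v$, the triangle inequality $|\alpha - \alpha_1| \le |\alpha - v| + (\alpha_1 - v)$ forces $|\alpha - v| + (\alpha_1 - v) \ge |\alpha - \alpha_1|$, so the product is dominated by $a^2 e^{-(b-a)|\alpha - \alpha_1|} = a\,\Psi(|\alpha - \alpha_1|)$, uniformly in $v$ on the relevant region.

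Finally I would insert this pointwise bound into the integral, pull the constant $a\,\Psi(|\alpha-\alpha_1|)$ outside, and evaluate the remaining integral $\int_{-\infty}^{\alpha_1} \prod_{i=2}^m \Psi(\alpha_i - v)\dd v$ by the same elementary computation as before — now with $m-1$ exponential factors and upper limit $\alpha_1$, which yields $\frac{1}{(b-a)(m-1)}\prod_{i=2}^m \Psi(\alpha_i - \alpha_1)$. Multiplying through recovers the stated estimate with coefficient $\frac{a}{(m-1)(b-a)}$. The only point requiring care is bookkeeping: one must keep clear which factors are genuine decreasing exponentials (because $v$ lies to the left of every $\alpha_i$) and which carries the absolute value, so that the triangle-inequality argument is applied to the single factor $\Psi(|\alpha-v|)$ paired with $\Psi(\alpha_1-v)$ and not to the strictly monotone factors.
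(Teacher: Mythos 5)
Your proof is correct. The first identity is handled exactly as in the paper: the product is supported on $\{v<\alpha_1\}$, collapses there to a single exponential $a^m e^{-(b-a)(\sum_i\alpha_i-mv)}$, and the integral over $(-\infty,\alpha_1)$ gives the constant $\frac{a}{m(b-a)}$.

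For the second bound you take a genuinely different, and shorter, route than the paper. The paper evaluates the integral essentially exactly, splitting $\R$ into $(-\infty,\alpha\wedge\alpha_1)$ and, when $\alpha<\alpha_1$, the extra interval $(\alpha,\alpha_1)$ on which $\Psi(|\alpha-v|)=\Psi(v-\alpha)$; this produces a closed form with the correction factor $1-\frac{2}{m+1}e^{-(b-a)(m-1)(\alpha_1-\alpha)}$, and only at the very end are terms discarded to reach the stated constant. You instead bound the integrand pointwise: on the support $\{v<\alpha_1\}$ you pair $\Psi(|\alpha-v|)$ with the single factor $\Psi(\alpha_1-v)$ and use the triangle inequality to get $\Psi(|\alpha-v|)\,\Psi(\alpha_1-v)=a^2e^{-(b-a)(|\alpha-v|+\alpha_1-v)}\le a\,\Psi(|\alpha-\alpha_1|)$ uniformly in $v$, after which the remaining integral $\int_{-\infty}^{\alpha_1}\prod_{i=2}^m\Psi(\alpha_i-v)\dd v=\frac{1}{(m-1)(b-a)}\prod_{i=2}^m\Psi(\alpha_i-\alpha_1)$ is elementary. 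Your bookkeeping here is the crucial point and it is right: the upper limit must be $\alpha_1$ (inherited from the extracted factor $\Psi(\alpha_1-v)$), not $\alpha_2$, and it is precisely this truncation that produces the factor $m-1$ with the correct exponent $\sum_{i\ge 2}(\alpha_i-\alpha_1)$. Your argument avoids the case distinction on the sign of $\alpha-\alpha_1$ altogether and lands on the same constant, at the price of being an inequality from the start rather than an exact evaluation that is bounded afterwards. The only caveat, which your proof shares with the lemma as stated, is the degenerate point $\alpha=\alpha_1$, where the right-hand side vanishes because of the indicator $\ind_{t>0}$ hidden in $\Psi$; this set is Lebesgue-null in all subsequent uses of the lemma and is harmless.
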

        \begin{proof}
            We have
            \begin{align*}
                \int_{\R}\prod_{i=1}^m\Psi(\alpha_i-v)\dd v & = \int_{-\infty}^{\alpha_1}a^{m-1}\Psi\left( \sum_{i=1}^m\alpha_i - mv \right)\dd v
                \\
                & = \frac{a^{m-1}}{m(b-a)}\Psi\left( \sum_{i=2}^m\alpha_i - (m-1)\alpha_1 \right)
                \\
                & = \frac{a^{m-1}}{m(b-a)}\Psi\left(\sum_{i=2}^m(\alpha_i-\alpha_1)\right)
            \end{align*}
            which is \eqref{eq:bound_integral}. Further, putting $ \alpha_0 := \alpha$ to ease the notation, we have
            \begin{multline*}
                \int_{\R}\Psi(|\alpha-v|)\prod_{i=1}^m\Psi(\alpha_i-v)\dd v 
                \\
                 = \int_{-\infty}^{\alpha\wedge\alpha_1} \prod_{i=0}^m\Psi(\alpha_i-v)\dd v + \ind_{\alpha<\alpha_1} \int_{\alpha}^{\alpha_1}\Psi(v-\alpha)\prod_{i=1}^m\Psi(\alpha_i-v)\dd v
                 \end{multline*}
                 which in turn equals 
                 \begin{multline*}
                                  a^m\int_{-\infty}^{\alpha\wedge\alpha_1}\Psi\bigg( \sum_{i=0}^m\alpha_i - (m+1)v \bigg)\dd v + a^m\ind_{\alpha<\alpha_1}\int_{\alpha}^{\alpha_1}\Psi\bigg( \sum_{i=1}^m\alpha_i - \alpha - (m-1)v \bigg)\dd v
                \\
                 = \frac{a^m}{(m+1)(b-a)}\Psi\bigg( \sum_{i=0}^m\alpha_i - (m+1)(\alpha\wedge\alpha_1) \bigg)
                \\
                 + \frac{a^m\ind_{\alpha<\alpha_1}}{(m-1)(b-a)}\left[ \Psi\bigg( \sum_{i=1}^m\alpha_i - \alpha - (m-1)\alpha_1 \bigg) - \Psi\bigg( \sum_{i=1}^m\alpha_i - m\alpha \bigg) \right].
            \end{multline*}
            By distinguishing the cases $\alpha\geq\alpha_1$ and $\alpha<\alpha_1$ in the first term, we obtain
            \begin{align*}
                &\int_{\R}\Psi(|\alpha-v|)\prod_{i=1}^m\Psi(\alpha_i-v)\dd v 
                \\
                & = \frac{a^m\ind_{\alpha\geq \alpha_1}}{(m+1)(b-a)}\Psi\bigg( \sum_{i=0}^m\alpha_i - (m+1)\alpha_1 \bigg)
                \\
                &~ + \frac{a^m\ind_{\alpha<\alpha_1}}{(m-1)(b-a)}\left[ \Psi\bigg( \sum_{i=1}^m\alpha_i - \alpha - (m-1)\alpha_1 \bigg) - \frac{2}{m+1} \Psi\bigg( \sum_{i=1}^m\alpha_i - m\alpha \bigg) \right]
                \\
                & = \frac{a^m\ind_{\alpha\geq \alpha_1}}{(m+1)(b-a)}\Psi\bigg( \alpha-\alpha_1 + \sum_{i=2}^m(\alpha_i - \alpha_1) \bigg)
                \\
                &~ + \frac{a^m\ind_{\alpha<\alpha_1}}{(m-1)(b-a)}\Psi\bigg( \alpha_1-\alpha + \sum_{i=2}^m(\alpha_i - \alpha_1) \bigg)\left[ 1 - \frac{2}{m+1} e^{-(b-a)(m-1)(\alpha_1-\alpha)} \right]
                \\
                & \leq \frac{a^m}{(m-1)(b-a)}\Psi\bigg( |\alpha-\alpha_1| + \sum_{i=2}^m(\alpha_i-\alpha_1) \bigg),
            \end{align*}
            implying \eqref{eq:bound_integral_abs}. 
        \end{proof}
        
        Let us now come back to the study of the first four cumulant measures.

        {\color{black}
        
        According to \eqref{eq:order-2}, the second order cumulant measure satisfies
        \begin{align*}
        \ind_{t_1\neq t_2}k_2(\dd t_1,\dd t_2) & = \int_{\mathbb{R}^2} \mu \dd x  \left( \delta_x(\dd u) + \Psi(u - x)\dd u\right)\\
        & \quad \times\left(\delta_u(\dd (t_1\wedge t_2))\Psi(t_1\vee t_2 - t_1\wedge t_2) + 
        \Psi(t_1 - u)\Psi(t_2 - u)\right) \dd t_1 \, \dd t_2\\       
            & = \frac{\mu}{1-\frac{a}{b}} \bigg( \Psi(|t_1-t_2|) + \int_{\R}\Psi(t_1-u)\Psi(t_2-u)\dd u \bigg)\dd t_1\dd t_2
            \\
            & = \frac{\mu}{1-\frac{a}{b}} \bigg( 1 + \frac{1}{2}\frac{a}{b-a} \bigg)\Psi(|t_1-t_2|)\dd t_1\dd t_2 ,
        \end{align*}}
        where we have used \eqref{eq:bound_integral} of Lemma~\ref{lem:bound_integral}. 
        
        The third and fourth order cumulant measures are more complicated. We need to compute the contribution corresponding to each equivalence class in 
        Figures~\ref{fig:cumul_order-3} and \ref{fig:cumul_order-4} respectively, and to take into account the corresponding permutations. Following \eqref{eq:order-3}, we set
        {\color{black}
        \[ S^{(1)}_3(\dd t_1,\dd t_2,\dd t_3) = \int_{\R^2} \mu \dd x \Lambda[x](\dd u) \Lambda[u](\dd t_1, \dd t_2, \dd t_3) \]
        and 
        \[ S^{(2,\sigma)}_3(\dd t_1,\dd t_2,\dd t_3) = \int_{\R^3}\mu \dd x \Lambda[x](\dd u) \Lambda[u](\dd v, \dd t_{\sigma(3)}] \Lambda[v](\dd t_{\sigma(1)}, \dd t_{\sigma(2)}) \]
    }
        such that
        \begin{equation}\label{eq:dec_order-3}
            \ind_{\Delta_{3}}k_3 = S^{(1)}_3 + \sum_{\sigma}S^{(2,\sigma)}_3.
        \end{equation}
        We have
        \begin{multline*}
             \ind_{t_1<t_2<t_3}S^{(1)}_3(\dd t_1,\dd t_2,\dd t_3)
            \\
             = \frac{\mu}{1-\frac{a}{b}}\ind_{t_1<t_2<t_3}\bigg( \Psi(t_3 - t_1)\Psi(t_2-t_1) + \int_{\R}\Psi(t_1-u)\Psi(t_2-u)\Psi(t_3-u)\dd u\bigg)\dd t_1\dd t_2\dd t_3
            \\
              = \frac{\mu}{1-\frac{a}{b}}\ind_{t_1<t_2<t_3}\bigg( 1 + \frac{1}{3}\frac{a}{b-a} \bigg)\Psi(t_3-t_1)\Psi(t_2-t_1)\dd t_1\dd t_2\dd t_3 ,
        \end{multline*}
        thanks to \eqref{eq:bound_integral} in Lemma~\ref{lem:bound_integral}, and then by symmetry
        \begin{equation}\label{eq:S1_order-3}
            S^{(1)}_3(\dd t_1,\dd t_2,\dd t_3) = a\frac{\mu}{1-\frac{a}{b}}\left( 1 + \frac{1}{3}\frac{a}{b-a} \right)\Psi\bigg(\sum_{i=1}^3t_i-3\min_it_i \bigg)\dd t_1\dd t_2\dd t_3.
        \end{equation}
        Further, the measure associated to the equivalence class \textbf{(b)} in Figure~\ref{fig:cumul_order-3} satisfies for a fixed permutation $\sigma$ of $\{1,2,3\},$
       {\color{black}
        \begin{align*}
            & S^{(2,\sigma)}_3(\dd t_1,\dd t_2,\dd t_3) 
            \\
            &\quad = \int_{\R^3} \mu \dd x \left(\delta_x(\dd u) + \Psi(u-x)\dd u\right)\Lambda[u](\dd v,\dd t_{\sigma(3)})\Lambda[v](\dd t_{\sigma(1)}, \dd t_{\sigma(2)}) 
            \\
            &\quad = \frac{\mu}{1-\frac{a}{b}}\bigg( \int_{\R}\bigg\{ \underbrace{\int_{\R}\Psi(v-u)\Psi(t_{\sigma(3)}-u)\dd u}_{=\,\frac{1}{2}\frac{a}{b-a}\Psi(|t_{\sigma(3)}-v|)} + \Psi(v-t_{\sigma(3)}) \bigg\}
            \Lambda[v](\dd t_{\sigma(1)},\dd t_{\sigma(2)}) \dd v \bigg)\dd t_{\sigma(3)}
            \\
            &\quad \leq \frac{\mu}{1-\frac{a}{b}} \left( 1 + \frac{1}{2}\frac{a}{b-a} \right)\bigg\{\int_{\R}\Psi(|v-t_{\sigma(3)}|)\Lambda[v](\dd t_{\sigma(1)}, \dd t_{\sigma(2)}) \dd v\bigg\}\dd t_{\sigma(3)}
            \\
            & \quad = \frac{\mu}{1-\frac{a}{b}} \left( 1 + \frac{1}{2}\frac{a}{b-a} \right) \bigg\{ \Psi(|t_{\sigma(3)}-t_{\sigma(1)}\wedge t_{\sigma(2)}|)\Psi(|t_{\sigma(1)} - t_{\sigma(2)}|)
            \\
            &\qquad\quad + \int_{\R}\Psi(|t_{\sigma(3)}-v|)\Psi(t_{\sigma(1)}-v)\Psi(t_{\sigma(2)}-v)\dd v \bigg\}
            \dd t_1 \dd t_2 \dd t_3 .
        \end{align*}}
        This implies thanks to \eqref{eq:bound_integral_abs} in Lemma~\ref{lem:bound_integral} that
        \begin{multline*}
             S^{(2,\sigma)}_3(\dd t_1,\dd t_2,\dd t_3) \leq 
            \\
              \frac{\mu}{1-\frac{a}{b}} \left( 1 + \frac{1}{2}\frac{a}{b-a} \right) \left( 1 + \frac{a}{b-a} \right)\Psi(|t_{\sigma(3)}-t_{\sigma(1)}\wedge t_{\sigma(2)}|)\Psi(|t_{\sigma(1)} - t_{\sigma(2)}|)\dd t_1\dd t_2\dd t_3
        \end{multline*}
        and then
        \begin{equation}\label{eq:S2sig_order-3}
            S^{(2,\sigma)}_3(\dd t_1,\dd t_2,\dd t_3) \leq \frac{a\mu}{(1-\frac{a}{b})^3}\Psi(|t_{\sigma(3)}-t_{\sigma(1)}\wedge t_{\sigma(2)}| + |t_{\sigma(1)} - t_{\sigma(2)}|) \dd t_1\dd t_2\dd t_3.
        \end{equation}
        Using \eqref{eq:S1_order-3} and \eqref{eq:S2sig_order-3}, we deduce from \eqref{eq:dec_order-3} that
        \begin{align*}
            & \ind_{\Delta_{\ell}}(t_1,t_2,t_3)k_3(\dd t_1,\dd t_2,\dd t_3) 
            \\
            &\qquad \leq \frac{a\mu}{1-\frac{a}{b}}\bigg\{ 1 + \frac{1}{3}\frac{a}{b-a} + \frac{3}{(1-\frac{a}{b})^2}  \bigg\}\Psi\left(\max_it_i - \min_it_i\right)\dd t_1\dd t_2\dd t_3
            \\
            &\qquad \leq \frac{4a\mu}{(1-\frac{a}{b})^3}\Psi\left(\max_it_i - \min_it_i\right) \dd t_1\dd t_2\dd t_3,
        \end{align*}
        where the factor $3$ comes from the number of permutations associated to the equivalence class \textbf{(b)} in Figure~\ref{fig:cumul_order-3}. This implies \eqref{eq:cumul_order-1234} with $l=3$.
        \medskip
        
        We are now interested in the fourth order cumulant, and we introduce the measures associated to each equivalence class in Figure~\ref{fig:cumul_order-4} that are defined from \eqref{eq:order-4} by
        {\color{black}
        \begin{multline}\label{eq:S_order-4}
            S^{(1)}_4 =  \int_{\R^2} \mu \dd x \Lambda[x](\dd u)\Lambda[u](\dd t_1, \dd t_2,\dd t_3, \dd t_4),
            \\
            S^{(2,\sigma)}_4 =\int_{\R^3} \mu \dd x \Lambda[x](\dd u) \Lambda [u](\dd v,  \dd t_{\sigma(3)},\dd t_{\sigma(4)})\Lambda[v](\dd t_{\sigma(1)},\dd t_{\sigma(2)}),
            \\
            S^{(3,\sigma)}_4 =\int_{\R^3}\mu\dd x \Lambda[x](\dd u)\Lambda[u][\dd v,\dd t_{\sigma(4)})\Lambda[v](\dd t_{\sigma(1)}, \dd t_{\sigma(2)}, \dd t_{\sigma(3)}) ,
            \\
            S^{(4,\sigma)}_4 = \int_{\R^4}\mu \dd x\Lambda[x](\dd u)\Lambda[u](\dd v, \dd w)\Lambda[v](\dd t_{\sigma(1)},\dd t_{\sigma(2)})\Lambda[w](\dd t_{\sigma(3)},\dd t_{\sigma(4)}),
            \\
            S^{(5,\sigma)}_4 = \int_{\R^4}\mu \dd x \Lambda[x](\dd u)\Lambda[u](\dd v,\dd t_{\sigma(4)})\Lambda[v](\dd w, \dd t_{\sigma(3)})\Lambda[w](\dd t_{\sigma(1)},\dd t_{\sigma(2)}),
        \end{multline}
        where to ease notation we wrote $ S^{(k, \sigma)}_4 $ instead of $S^{(k, \sigma)}_4 (\dd t_1, \dd t_2, \dd t_3, \dd t_4), $ for each $ 1 \le k \le 5.$
        
        Then
        \begin{equation}\label{eq:dec_order-4}
            \ind_{\Delta_4}k_4 = S^{(1)}_4 + \sum_{\sigma}S^{(2,\sigma)}_4 + \sum_{\sigma}S^{(3,\sigma)}_4 + \sum_{\sigma}S^{(4,\sigma)}_4 + \sum_{\sigma}S^{(5,\sigma)}_4.
        \end{equation}
        
        \paragraph*{Bound on $S^{(1)}_4$} 
        It follows from the first equation in \eqref{eq:S_order-4} and from Lemma~\ref{lem:bound_integral} that
        \begin{align*}
            & \ind_{t_1<t_2<t_3<t_4}S^{(1)}_4(\dd t_1,\dd t_2,\dd t_3,\dd t_4) 
            \\
            &\qquad = \frac{\mu}{1-\frac{a}{b}}\ind_{t_1<t_2<t_3<t_4}\bigg( \Psi(t_2-t_1)\Psi(t_3-t_1)\Psi(t_4-t_1)
            \\
            &\qquad~ + \int_{\R}\Psi(t_1-u)\Psi(t_2-u)\Psi(t_3-u)\Psi(t_4-u)\dd u\bigg)\dd t_1\dd t_2\dd t_3\dd t_4
            \\
            &\qquad = a^2\frac{\mu}{1-\frac{a}{b}}\ind_{t_1<t_2<t_3<t_4}\bigg( 1 + \frac{1}{4}\frac{a}{b-a} \bigg)\Psi(t_2+t_3+t_4-3t_1)\dd t_1\dd t_2\dd t_3\dd t_4,
        \end{align*}
        which implies by symmetry that
        \begin{equation}\label{eq:S1_order-4}
            S^{(1)}_4(\dd t_1,\dd t_2,\dd t_3,\dd t_4) = \frac{a^2\mu}{1-\frac{a}{b}}\bigg( 1 + \frac{1}{4}\frac{a}{b-a} \bigg)\Psi\bigg(\sum_{i=1}^4t_i-4\min_it_i\bigg)\dd t_1\dd t_2\dd t_3\dd t_4 .
        \end{equation}
        
        \paragraph*{Bound on $S^{(2,\sigma)}_4$} We deduce from the second equation in \eqref{eq:S_order-4} and from Lemma~\ref{lem:bound_integral} that
        \begin{multline*}
             S^{(2,\sigma)}_4(\dd t_1,\dd t_2,\dd t_3,\dd t_4)
             = \frac{\mu}{1-\frac{a}{b}}\\
             \bigg\{ \int_{\R}\bigg[ \underbrace{\int_{\R}\Psi(v-u)\Psi(t_{\sigma(3)}-u)\Psi(t_{\sigma(4)}-u)\dd u}_{=\, \frac{a^2}{3(b-a)}\Psi(v + t_{\sigma(3)} + t_{\sigma(4)} - 3v\wedge t_{\sigma(3)}\wedge t_{\sigma(4)})} + \Psi(v-t_{\sigma(3)}\wedge t_{\sigma(4)})\Psi(|t_{\sigma(3)}-t_{\sigma(4)}|) \bigg]
            \\
            \qquad\qquad\qquad \Lambda[v](\dd t_{\sigma(1)}, \dd t_{\sigma(2)}) \dd v\bigg\}\dd t_{\sigma(3)}\dd t_{\sigma(4)}
            \end{multline*}
            which is in turn upper bounded by
            \[
             \frac{a\mu}{1-\frac{a}{b}}
             \left( 1 + \frac{1}{3}\frac{a}{b-a} \right)
             \bigg\{ \int_{\R}\Psi(v + t_{\sigma(3)} + t_{\sigma(4)} - 3v\wedge t_{\sigma(3)}\wedge t_{\sigma(4)})\Lambda[v](\dd t_{\sigma(1)},\dd t_{\sigma(2)}) \dd v \bigg\}\dd t_{\sigma(3)}\dd t_{\sigma(4)}
            \]
            which equals
            \begin{multline*}
             \frac{a\mu}{1-\frac{a}{b}}\left( 1 + \frac{1}{3}\frac{a}{b-a} \right) 
            \\
            \bigg\{ \Psi(t_{\sigma(1)}\wedge t_{\sigma(2)} + t_{\sigma(3)} + t_{\sigma(4)} - 3(t_{\sigma(1)}\wedge t_{\sigma(2)})\wedge t_{\sigma(3)}\wedge t_{\sigma(4)})\Psi(|t_{\sigma(1)} - t_{\sigma(2)}|)
            \\
             + \int_{\R}\Psi(v + t_{\sigma(3)} + t_{\sigma(4)} - 3v\wedge t_{\sigma(3)}\wedge t_{\sigma(4)})\Psi(t_{\sigma(1)}-v)\Psi(t_{\sigma(2)}-v)\dd v \bigg\}\dd t_1\dd t_2\dd t_3\dd t_4.
        \end{multline*}
        Now,
        \begin{align*}
            & \int_{\R}\Psi(v + t_{\sigma(3)} + t_{\sigma(4)} - 3v\wedge t_{\sigma(3)}\wedge t_{\sigma(4)})\Psi(t_{\sigma(1)}-v)\Psi(t_{\sigma(2)}-v)\dd v
            \\
            &\quad = \ind_{t_{\sigma(3)}\wedge t_{\sigma(4)} > t_{\sigma(1)}\wedge t_{\sigma(2)}} \int_\R  \Psi(t_{\sigma(3)}+  t_{\sigma(4)}  - 2 v ) \Psi ( t_{\sigma(1)} -v) \Psi ( t_{\sigma(2)}- v) \dd v  
            \\
            &\quad~ + \ind_{t_{\sigma(3)}\wedge t_{\sigma(4)} \leq t_{\sigma(1)}\wedge t_{\sigma(2)}} \int_{t_{\sigma(3)}\wedge t_{\sigma(4)}}^{t_{\sigma(1)}\wedge t_{\sigma(2)}} \Psi ( v + t_{\sigma(3)}\vee t_{\sigma(4)} - 2 t_{\sigma(3)}\wedge t_{\sigma(4)}) \\
            & \quad \quad \quad \quad \quad \quad \quad \quad \quad \quad \quad \quad \quad \quad \quad \quad  \Psi (  t_{\sigma(1)}-v) \Psi (   t_{\sigma(2)} - v ) \dd v =: I_1 + I_2.
            \end{align*}
    Using Lemma~\ref{lem:bound_integral}, clearly 
    \begin{multline*} I_1 =\frac{a^2}{4(b-a)}\ind_{t_{\sigma(3)}\wedge t_{\sigma(4)} > t_{\sigma(1)}\wedge t_{\sigma(2)}}\\
    \Psi(|t_{\sigma(1)} - t_{\sigma(2)}| + |t_{\sigma(3)} - t_{\sigma(4)}| + 2(t_{\sigma(3)}\wedge t_{\sigma(4)} - t_{\sigma(1)}\wedge t_{\sigma(2)}))  . 
    \end{multline*}
    Moreover, on $\{t_{\sigma(3)}\wedge t_{\sigma(4)} \leq t_{\sigma(1)}\wedge t_{\sigma(2)}\} ,$
            \begin{align*}
                  I_2 &=\frac{a^2}{b-a} \\
                  &\Big[ \Psi ( t_{\sigma(1)}\wedge t_{\sigma(2)} + |t_{\sigma(3)}- t_{\sigma(4)} | - t_{\sigma(3)}\wedge t_{\sigma(4)} + |t_{\sigma(1)}- t_{\sigma(2)} |)  \\
                  & \quad \quad \quad \quad \quad \quad -
                  \Psi (|t_{\sigma(3)}- t_{\sigma(4)} | + ( t_{\sigma(1)} - t_{\sigma(3)}\wedge t_{\sigma(4)}) + ( t_{\sigma(2)} - t_{\sigma(3)}\wedge t_{\sigma(4)}) ) \Big] \\
                  &= \frac{a^2}{b-a} \Psi ( |t_{\sigma(1)}- t_{\sigma(2)} | + |t_{\sigma(3)}- t_{\sigma(4)} | + t_{\sigma(1)}\wedge t_{\sigma(2)} - t_{\sigma(3)}\wedge t_{\sigma(4)}) \\
                  & \quad \quad \quad \quad \quad \quad \quad \quad \quad \quad  \Big [ 1 -e^{-(b-a)(t_{\sigma(1)}\wedge t_{\sigma(2)} - t_{\sigma(3)}\wedge t_{\sigma(4)})}  \Big],
                      \end{align*}
                      implying that 
        \[ 
        I_2 \le \frac{a^2}{b-a} \Psi ( |t_{\sigma(1)}- t_{\sigma(2)} | + |t_{\sigma(3)}- t_{\sigma(4)} | + t_{\sigma(1)}\wedge t_{\sigma(2)} - t_{\sigma(3)}\wedge t_{\sigma(4)}).
        \]
        Putting things together, we obtain finally that 
        \[ I_1 + I_2 \le 
            \frac{a^2}{b-a}\Psi(|t_{\sigma(3)} - t_{\sigma(4)}| + \beta(t_{\sigma(3)}\wedge t_{\sigma(4)} - t_{\sigma(1)}\wedge t_{\sigma(2)}) + |t_{\sigma(1)} - t_{\sigma(2)}|), 
        \]
        where we set $\beta(x) = |x| + x_+$ for any $x\in\R$. Thus, writing for short $ \dd t_1^4 := \dd t_1 \dd t_2 \dd t_3 \dd t_4,$ 
        \begin{multline*}
             S^{(2,\sigma)}_4(\dd t_1,\dd t_2,\dd t_3,\dd t_4)\leq \frac{\mu}{1-\frac{a}{b}}\bigg(1 + \frac{a}{b-a}\bigg) 
            \\
             \bigg(1 + \frac{a}{3(b-a)}\bigg) \Psi(|t_{\sigma(3)} - t_{\sigma(4)}|)\Psi\circ \beta(t_{\sigma(3)}\wedge t_{\sigma(4)}-t_{\sigma(1)}\wedge t_{\sigma(2)})\Psi(|t_{\sigma(1)} - t_{\sigma(2)}|)\dd t_1^4, 
        \end{multline*}
        and then
        \begin{multline}\label{eq:S2sig_order-4}
            S^{(2,\sigma)}_4(\dd t_1,\dd t_2,\dd t_3,\dd t_4) \leq \frac{a^2\mu}{(1-\frac{a}{b})^3}
            \\
            \Psi(|t_{\sigma(3)} - t_{\sigma(4)}| + \beta(t_{\sigma(3)}\wedge t_{\sigma(4)}-t_{\sigma(1)}\wedge t_{\sigma(2)}) + |t_{\sigma(1)} - t_{\sigma(2)}|)\dd t_1^4.
        \end{multline}

        \paragraph*{Bound on $S^{(3,\sigma)}_4$} We deduce from the third equation in \eqref{eq:S_order-4} that, 
        \begin{align*}
            & S^{(3,\sigma)}_4(\dd t_1,\dd t_2,\dd t_3,\dd t_4)= \frac{\mu}{1-\frac{a}{b}} 
            \\
            & \bigg\{ \int_{\R}\bigg[ \underbrace{\int_{\R}\Psi(v-u)\Psi(t_{\sigma(4)}-u) \dd u}_{=\, \frac{a}{2(b-a)}\Psi(|t_{\sigma(4)} - v|)} + \Psi(v-t_{\sigma(4)}) \bigg]\Lambda[v](\dd t_{\sigma(1)},\dd t_{\sigma(2)},\dd t_{\sigma(3)}) \dd v\bigg\}\dd t_{\sigma(4)}
            \\
            & \leq \frac{\mu}{1-\frac{a}{b}}\left( 1 + \frac{1}{2}\frac{a}{b-a}\right)\bigg\{ \int_{\R}\Psi(|v-t_{\sigma(4)}|)\Lambda[v](\dd t_{\sigma(1)}, \dd t_{\sigma(2)},\dd t_{\sigma(3)})\dd v\bigg\}\dd t_{\sigma(4)}
            \\
            & = \frac{\mu}{1-\frac{a}{b}}\left( 1 + \frac{1}{2}\frac{a}{b-a}\right)\bigg\{ a\Psi(|t_{\sigma(1)}\wedge t_{\sigma(2)}\wedge t_{\sigma(3)}-t_{\sigma(4)}|)\\
            & \qquad\qquad\qquad\qquad\qquad\qquad\qquad\qquad\qquad\Psi\left(\sum_{i=1}^3t_{\sigma(i)} - 3t_{\sigma(1)}\wedge t_{\sigma(2)}\wedge t_{\sigma(3)}\right)
            \\
            &\qquad + \int_{\R}\Psi(|v-t_{\sigma(4)}|)\Psi(t_{\sigma(1)}-v)\Psi(t_{\sigma(2)}-v)\Psi(t_{\sigma(3)}-v)\dd v \bigg\}\dd t_1^4 .
        \end{align*}
        
        Using \eqref{eq:bound_integral_abs} in Lemma~\ref{lem:bound_integral}, it follows that
        \begin{align*}
            S^{(3,\sigma)}_4(\dd t_1,\dd t_2,\dd t_3,\dd t_4) & \leq \frac{a\mu}{1-\frac{a}{b}}\left( 1 + \frac{1}{2}\frac{a}{b-a} \right)^2 \Psi(|t_{\sigma(1)}\wedge t_{\sigma(2)}\wedge t_{\sigma(3)}-t_{\sigma(4)}|)
            \\
            &\qquad\qquad \Psi\left(\sum_{i=1}^3t_{\sigma(i)} - 3t_{\sigma(1)}\wedge t_{\sigma(2)}\wedge t_{\sigma(3)}\right)\dd t_1^4 ,
        \end{align*}
        and then
        \begin{multline}\label{eq:S3sig_order-4}
            S^{(3,\sigma)}_4(\dd t_1,\dd t_2,\dd t_3,\dd t_4) 
            \\
            \leq \frac{a^2\mu}{(1-\frac{a}{b})^3}\Psi\bigg(|t_{\sigma(4)}-t_{\sigma(1)}\wedge t_{\sigma(2)}\wedge t_{\sigma(3)}| + \sum_{i=1}^3t_{\sigma(i)} - 3t_{\sigma(1)}\wedge t_{\sigma(2)}\wedge t_{\sigma(3)}\bigg)\dd t_1^4.
        \end{multline}

        \paragraph*{Bound on $S^{(4,\sigma)}_4$} We deduce from the fourth equation in \eqref{eq:S_order-4} that
        \begin{align*}
            & S^{(4,\sigma)}_4(\dd t_1,\dd t_2,\dd t_3,\dd t_4)
            \\
            & = \frac{\mu}{1-\frac{a}{b}}\bigg\{ \int_{\R^2}\left( \int_{\R}\Psi(v-u)\Psi(w-u)\dd u \right)\Lambda[v](\dd t_{\sigma(1)},\dd t_{\sigma(2)})
            \Lambda[w](\dd t_{\sigma(3)}, \dd t_{\sigma(4)})  \dd v \dd w\bigg\}
            \\
            & = \frac{a}{2b}\frac{\mu}{(1-\frac{a}{b})^2}\bigg\{ \int_{\R^2}\Psi(|v-w|)\Lambda[v](\dd t_{\sigma(1)},\dd t_{\sigma(2)})\Lambda[w](\dd t_{\sigma(3)},\dd t_{\sigma(4)}) \dd v \dd w\bigg\}
            \\
            & = \frac{a}{2b}\frac{\mu}{(1-\frac{a}{b})^2}\bigg\{ \int_{\R} \bigg[ \Psi(|t_{\sigma(1)}\wedge t_{\sigma(2)}-w|)\Psi(|t_{\sigma(1)}-t_{\sigma(2)}|) 
            \\
            &\qquad\qquad\qquad + \int_{\R}\Psi(|v-w|)\Psi(t_{\sigma(1)}-v)\Psi(t_{\sigma(2)}-v)\dd v \bigg]\Lambda[w](\dd t_{\sigma(3)},\dd t_{\sigma(4)}) \dd w\bigg\}\dd t_{\sigma(2)} \dd t_{\sigma(1)} .
        \end{align*}
        Using \eqref{eq:bound_integral_abs} in Lemma~\ref{lem:bound_integral}, it follows that
        \begin{align*}
            & S^{(4,\sigma)}_4(\dd t_1,\dd t_2,\dd t_3,\dd t_4)
            \\
            & \leq \frac{a}{2b}\frac{\mu}{(1-\frac{a}{b})^3}\Psi(|t_{\sigma(1)}-t_{\sigma(2)}|)\bigg\{ \int_{\R} \Psi(|t_{\sigma(1)}\wedge t_{\sigma(2)}-w|)\Lambda[w](\dd t_{\sigma(3)}, \dd t_{\sigma(4)}) \dd w\bigg\}
            \dd t_{\sigma(1)} \dd t_{\sigma(2)}
            \\
            & \leq \frac{a}{2b}\frac{\mu}{(1-\frac{a}{b})^3}\Psi(|t_{\sigma(1)}-t_{\sigma(2)}|)\bigg\{ \Psi(|t_{\sigma(1)}\wedge t_{\sigma(2)}-t_{\sigma(3)}\wedge t_{\sigma(4)}|)\Psi(|t_{\sigma(3)}-t_{\sigma(4)}|)
            \\
            &\quad + \int_{\R} \Psi(|t_{\sigma(1)}\wedge t_{\sigma(2)}-w|)\Psi(t_{\sigma(3)}-w)\Psi(t_{\sigma(4)}-w)\dd w \bigg\}\dd t_1^4
        \end{align*}
        and then
        \begin{multline}\label{eq:S4sig_order-4}
            S^{(4,\sigma)}_4(\dd t_1,\dd t_2,\dd t_3,\dd t_4)
            \\
            \leq \frac{a^3}{2b}\frac{\mu}{(1-\frac{a}{b})^4}\Psi(|t_{\sigma(3)} - t_{\sigma(4)}| + |t_{\sigma(1)}\wedge t_{\sigma(2)}-t_{\sigma(3)}\wedge t_{\sigma(4)}| + |t_{\sigma(1)} - t_{\sigma(2)}|)\dd t_1^4.
        \end{multline}

        \paragraph*{Bound on $S^{(5,\sigma)}_4$} We deduce from the fifth equation in \eqref{eq:S_order-4} that
        \begin{align*}
            & S^{(5,\sigma)}_4(\dd t_1,\dd t_2,\dd t_3,\dd t_4)
            \\
            & = \frac{\mu}{1-\frac{a}{b}}\bigg\{ \int_{\R^2}\bigg( \underbrace{\int_{\R}\Psi(v-u)\Psi(t_{\sigma(4)}-u)\dd u}_{=\,\frac{a}{2(b-a)}\Psi(|v-t_{\sigma(4)}|)} + \Psi(v-t_{\sigma(4)}) \bigg)
            \\
            &\qquad\qquad\qquad \Lambda[v](\dd w,\dd t_{\sigma(3)})\Lambda[w](\dd t_{\sigma(1)},\dd t_{\sigma(2)}) \dd v \bigg\}\dd t_{\sigma(4)}
            \\
            & \leq \frac{\mu}{1-\frac{a}{b}}\bigg( 1 + \frac{1}{2}\frac{a}{b-a} \bigg)\bigg\{ \int_{\R^2}\Psi(|v-t_{\sigma(4)}|)\Lambda[v](\dd w, \dd t_{\sigma(3)})\Lambda[w](\dd t_{\sigma(1)},\dd t_{\sigma(2)}) \dd v \bigg\}\dd t_{\sigma(4)}
            \\
            & = \frac{\mu}{1-\frac{a}{b}}\bigg( 1 + \frac{1}{2}\frac{a}{b-a} \bigg)\bigg\{ \int_{\R}\bigg[ \int_{\R}\Psi(|v-t_{\sigma(4)}|)\Psi(w-v)\Psi(t_{\sigma(3)}-v)\dd v 
            \\
            &\qquad\qquad\qquad\qquad + \Psi(|t_{\sigma(3)}-t_{\sigma(4)}|)\Psi(w-t_{\sigma(3)}) \bigg]\Lambda[w](\dd t_{\sigma(1)}, \dd t_{\sigma(2)}) \dd w\bigg\}\dd t_{\sigma(3)}\dd t_{\sigma(4)}.
        \end{align*}
        Using \eqref{eq:bound_integral_abs} in Lemma~\ref{lem:bound_integral}, we deduce that
        \begin{align*}
            & S^{(5,\sigma)}_4(\dd t_1,\dd t_2,\dd t_3,\dd t_4)
            \\
            & \leq \frac{\mu}{1-\frac{a}{b}}\bigg( 1 + \frac{1}{2}\frac{a}{b-a} \bigg)\bigg\{ \int_{\R}\bigg[ \frac{a}{b-a}\Psi(|t_{\sigma(4)} - w\wedge t_{\sigma(3)}|)\Psi(|w-t_{\sigma(3)}|)
            \\
            &\qquad\qquad\qquad\qquad + \Psi(|t_{\sigma(3)}-t_{\sigma(4)}|)\Psi(w-t_{\sigma(3)}) \bigg]\Lambda[w](\dd t_{\sigma(1)},\dd t_{\sigma(2)}) \dd w \bigg\}\dd t_{\sigma(3)} \dd t_{\sigma(4)}
            \\
            & \leq \frac{\mu}{1-\frac{a}{b}}\bigg( 1 + \frac{1}{2}\frac{a}{b-a} \bigg)\bigg( 1 + \frac{a}{b-a} \bigg)
            \\
            &\qquad \bigg\{ \int_{\R}\Psi(|t_{\sigma(4)} - w\wedge t_{\sigma(3)}|)\Psi(|w-t_{\sigma(3)}|) \Lambda[w](\dd t_{\sigma(1)}, \dd t_{\sigma(2)}) \dd w\bigg\}\dd t_{\sigma(3)} \dd t_{\sigma(4)}
            \\
            & \leq \frac{\mu}{(1-\frac{a}{b})^3}\bigg\{ \Psi(|t_{\sigma(4)} - t_{\sigma(1)}\wedge t_{\sigma(2)}\wedge t_{\sigma(3)}|)\Psi(|t_{\sigma(1)}\wedge t_{\sigma(2)}-t_{\sigma(3)}|) \Psi(|t_{\sigma(1)}-t_{\sigma(2)}|) 
            \\
            &\qquad + \int_{\R}\Psi(|t_{\sigma(4)} - w\wedge t_{\sigma(3)}|)\Psi(|w-t_{\sigma(3)}|) \Psi(t_{\sigma(1)}-w)\Psi(t_{\sigma(2)}-w)\dd w \bigg\}\dd t_1^4.
        \end{align*}
        }
        Moreover, 
        \begin{align*}
            & \int_{\R}\Psi(|t_{\sigma(4)} - w\wedge t_{\sigma(3)}|)\Psi(|w-t_{\sigma(3)}|) \Psi(t_{\sigma(1)}-w)\Psi(t_{\sigma(2)}-w)\dd w
            \\
            &\quad = \int_{\R}\Psi(|t_{\sigma(4)} - w|)\Psi(t_{\sigma(3)}-w) \Psi(t_{\sigma(1)}-w)\Psi(t_{\sigma(2)}-w)\dd w
            \\
            &\quad~ + \ind_{t_{\sigma(3)} < t_{\sigma(1)}\wedge t_{\sigma(2)}}\Psi(|t_{\sigma(3)}-t_{\sigma(4)}|)\! \! \! \!\!\!\! \int\limits_{t_{\sigma(3)}}^{t_{\sigma(1)}\wedge t_{\sigma(2)}}\! \! \! \Psi(w-\! t_{\sigma(3)}) \Psi(t_{\sigma(1)}-\! w)\Psi(t_{\sigma(2)}-\! w)\dd w
            \\
            &\quad \leq \frac{a^2}{2(b-a)}\Psi(|t_{\sigma(4)} - t_{\sigma(1)}\wedge t_{\sigma(2)}\wedge t_{\sigma(3)}|)\Psi\bigg( \sum_{i=1}^3t_{\sigma(i)} - 3t_{\sigma(1)}\wedge t_{\sigma(2)}\wedge t_{\sigma(3)} \bigg)
            \\
            &\quad~ + \frac{a^2}{b-a}\ind_{t_{\sigma(3)} < t_{\sigma(1)}\wedge t_{\sigma(2)}}\Psi(|t_{\sigma(3)}-t_{\sigma(4)}|) \bigg[ \Psi(t_{\sigma(1)} + t_{\sigma(2)} - t_{\sigma(3)} - t_{\sigma(1)}\wedge t_{\sigma(2)}) 
            \\
            &\qquad\qquad\qquad - \Psi(t_{\sigma(1)} + t_{\sigma(2)} - 2t_{\sigma(3)}) \bigg]
        \end{align*}
        thanks to \eqref{eq:bound_integral_abs} in Lemma~\ref{lem:bound_integral}. By distinguishing the cases $t_{\sigma(3)} \geq t_{\sigma(1)}\wedge t_{\sigma(2)}$ and the case $t_{\sigma(3)} < t_{\sigma(1)}\wedge t_{\sigma(2)}$ in the first term, we obtain
        \begin{align*}
            & \int_{\R}\Psi(|t_{\sigma(4)} - w\wedge t_{\sigma(3)}|)\Psi(|w-t_{\sigma(3)}|) \Psi(t_{\sigma(1)}-w)\Psi(t_{\sigma(2)}-w)\dd w
            \\
            &\quad \leq \frac{a^2}{2(b-a)}\ind_{t_{\sigma(3)} \geq t_{\sigma(1)}\wedge t_{\sigma(2)}}\Psi(|t_{\sigma(4)} - t_{\sigma(1)}\wedge t_{\sigma(2)}|)\Psi\bigg(\underbrace{\sum_{i=1}^3t_{\sigma(i)} - 3t_{\sigma(1)}\wedge t_{\sigma(2)}}_{=\, |t_{\sigma(1)}-t_{\sigma(2)}| + t_{\sigma(3)} - t_{\sigma(1)}\wedge t_{\sigma(2)}}\bigg)
            \\
            &\quad~ + \frac{a^2}{b-a}\ind_{t_{\sigma(3)} < t_{\sigma(1)}\wedge t_{\sigma(2)}}\Psi(|t_{\sigma(4)}-t_{\sigma(3)}|) \Psi(\underbrace{t_{\sigma(1)} + t_{\sigma(2)} - t_{\sigma(3)} - t_{\sigma(1)}\wedge t_{\sigma(2)}}_{=\, |t_{\sigma(1)}-t_{\sigma(2)}|+t_{\sigma(1)}\wedge t_{\sigma(2)}-t_{\sigma(3)}})
            \\
            &\quad \leq \frac{a}{b-a}\Psi(|t_{\sigma(4)}-t_{\sigma(1)}\wedge t_{\sigma(2)}\wedge t_{\sigma(3)}|)\Psi(|t_{\sigma(1)}-t_{\sigma(2)}|)\Psi(|t_{\sigma(3)}-t_{\sigma(1)}\wedge t_{\sigma(2)}|).
        \end{align*}
        Then coming back to the bound on $S^{5,\sigma}_4$, we deduce that
        \begin{multline}\label{eq:S5sig_order-4}
            S^{(5,\sigma)}_4(\dd t_1,\dd t_2,\dd t_3,\dd t_4)
            \\
            \leq \frac{a^2\mu}{(1-\frac{a}{b})^4}\Psi(|t_{\sigma(4)}-t_{\sigma(1)}\wedge t_{\sigma(2)}\wedge t_{\sigma(3)}| + |t_{\sigma(1)}\wedge t_{\sigma(2)}-t_{\sigma(3)}| + |t_{\sigma(1)} - t_{\sigma(2)}|)\dd t_1^4.
        \end{multline}

        Thanks to the inequalities \eqref{eq:S1_order-4}, \eqref{eq:S2sig_order-4}, \eqref{eq:S3sig_order-4}, \eqref{eq:S4sig_order-4} and \eqref{eq:S5sig_order-4}, we deduce {\color{black}from} \eqref{eq:dec_order-4} that the fourth order cumulant measure satisfies the bound
        \begin{align*}
            & \ind_{\Delta_4}(t_1,t_2,t_3,t_4)k_4(\dd t_1,\dd t_2, \dd t_3,\dd t_4)
            \\
            & \leq \frac{a^2\mu}{1-\frac{a}{b}} \bigg[ 1 + \frac{1}{4}\frac{a}{b-a} + 6\times \frac{1}{(1 - \frac{a}{b})^2} + 4\times \frac{1}{(1 - \frac{a}{b})^2} + 3\times \frac{a}{2b}\frac{1}{(1 - \frac{a}{b})^3} + 12\times \frac{1}{(1 - \frac{a}{b})^3} \bigg] 
            \\
            &\quad \times\Psi\left(\max_it_i - \min_it_i\right)\dd t_1\dd t_2\dd t_3\dd t_4
            \\
            & \leq \frac{C a^2\mu}{(1-\frac{a}{b})^4}\Psi\left(\max_it_i - \min_it_i\right)\dd t_1\dd t_2\dd t_3\dd t_4,
        \end{align*}
        which implies \eqref{eq:cumul_order-1234} with {\color{black}$m=4$}.

    \subsection{Proof of Eq. \texorpdfstring{\eqref{eq:mean_Haw}}{}}\label{appendixC2}
        Let us recall that the entire network can be constructed using the univariate Hawkes process $N$ with intensity
        \[ \lambda(t) = \nu M + a\int_{(-\infty,t)}e^{-b(t-s)}\dd N_s,  \]
        for all $t.$
        
        We introduce an \textrm{i.i.d} family $\{J_i,i\in\mathbb{Z}\}$ of uniform random variables on $\llbracket 1,M\rrbracket$, independent of anything else. Each jump time $u$ of the Hawkes process $N$ corresponds to a jump time of the neuron with label $J_{N_{u-}+1}$.
       
        It follows that under the observed model, we have
        \begin{align*}
            \varphi(X^1_1,X^2_1) & = \int_0^T\int_0^T\ind_{|u-v|\leq\delta}\dd X^1_1(u)\dd X^2_1(v) 
            \\
            & = \int_0^T\int_0^T\ind_{|u-v|\leq\delta}\ind_{J_{N_{u-}+1}=1}\ind_{J_{N_{v-}+1}=2}\dd N_u\dd N_v ,
        \end{align*}
        and then, conditioning by $N$, we deduce that
        \begin{align*}
            \Eo\big[\varphi(X^1_1,X^2_1)\big] & = \E\left[ \int_0^T\int_0^T\ind_{|u-v|\leq\delta}\eP(J_1=1,J_2=2)\ind_{u\neq v}\dd N_u\dd N_v \right] 
            \\
            & =\frac{1}{M^2} \E\left[\int_0^T\int_0^T\ind_{|u-v|\leq\delta}\ind_{u\neq v}\dd N_u\dd N_v \right]
            \\
            & =\frac{1}{M^2}\int_0^T\int_0^T\ind_{|u-v|\leq\delta}\ind_{u\neq v}\E[\dd N_u\dd N_v].
        \end{align*}
        Using the same idea under the independent model, we have
        \begin{align*}
            \varphi(X^1_1,X^2_1) & = \int_0^T\int_0^T\ind_{|u-v|\leq\delta}\dd X^1_1(u)\dd X^2_1(v) 
            \\
            & = \int_0^T\int_0^T\ind_{|u-v|\leq\delta}\ind_{J_{N_{u-}+1}=1}\ind_{J_{\tilde{N}_{v-}+1}=2}\dd N_u\dd\widetilde{N}_v ,
        \end{align*}
        where $\widetilde{N}$ is an independent copy of $N$, independent of $\{J_i,i\in\mathbb{Z}\}$. Conditioning on $(N,\widetilde{N})$, it follows that
        \begin{align*}
            \Ez\big[\varphi(X^1_1,X^2_1)\big] & = \E\left[ \int_0^T\int_0^T\ind_{|u-v|\leq\delta}\eP(J_1=1)\eP(J_1=2)\dd N_u\dd\widetilde{N}_v \right] 
            \\
            & =\frac{1}{M^2} \E\left[\int_0^T\int_0^T\ind_{|u-v|\leq\delta}\dd N_u\dd\widetilde{N}_v \right]
            \\
            & =\frac{1}{M^2}\int_0^T\int_0^T\ind_{|u-v|\leq\delta}\E[\dd N_u]\E[\dd N_v].
        \end{align*}
        We deduce that
        \begin{align*}
            & \E\big[\varphi(X^1_1,X^2_1)\big] - \Ez\big[\varphi(X^1_1,X^2_1)\big] \\
            & = \frac{1}{M^2}\int_0^T\int_0^T\ind_{|u-v|\leq\delta}\ind_{u\neq v}\big(\E[\dd N_u\dd N_v] - \E[\dd N_u]\E[\dd N_v]\big)
            \\
            & = \frac{1}{M^2}\int_0^T\int_0^T\ind_{|u-v|\leq\delta}\ind_{u\neq v}k_2(\dd u,\dd v),
        \end{align*}
        implying \eqref{eq:mean_cumul_hawkes}. 
        
        Using \eqref{eq:cumul_order-2} in Lemma~\ref{lem:cumulant}, we obtain moreover 
        \begin{align*}
            & \Eo\big[ \varphi(X^1_1,X^2_1) \big] - \Ez\big[ \varphi(X^1_1,X^2_1) \big] 
            \\
            &\qquad = \frac{\nu}{M}\frac{a}{1 - \frac{a}{b}} \bigg( 1 + \frac{1}{2}\frac{a}{b-a} \bigg)\int_0^T\int_0^Te^{-(b-a)|u-v|}\ind_{|u-v|\leq\delta} \dd u \dd v 
            \\
            &\qquad = \frac{\nu}{M}\frac{2a}{1 - \frac{a}{b}} \bigg( 1 + \frac{1}{2}\frac{a}{b-a} \bigg)\int_0^T\int_0^ue^{-(b-a)(u-v)}\ind_{u-v\leq\delta} \dd v \dd u 
            \\
            &\qquad = \frac{2\nu}{M}\frac{a}{1 - \frac{a}{b}} \bigg( 1 + \frac{1}{2}\frac{a}{b-a} \bigg)\bigg\{ \int_0^\delta\int_0^ue^{-(b-a)(u-v)} \dd v \dd u + \int_\delta^T\int_{u-\delta}^ue^{-(b-a)(u-v)} \dd v \dd u \bigg\} 
            \\
            &\qquad = \frac{2\nu}{M}\frac{a}{1 - \frac{a}{b}} \bigg( 1 + \frac{1}{2}\frac{a}{b-a} \bigg)\frac{1}{b-a}\bigg\{ \int_0^\delta\bigg[1 - e^{-(b-a)u}\bigg] \dd u + (T-\delta)\left[ 1 - e^{-(b-a)\delta}\right] \bigg\} 
            \\
            & \qquad = \frac{\nu}{M}\frac{2-\frac{a}{b}}{(1 - \frac{a}{b})^3} \frac{a}{b}\bigg\{ \delta - \frac{1 - e^{-(b-a)\delta}}{b-a} + (T-\delta)\left[ 1 - e^{-(b-a)\delta}\right] \bigg\}.
        \end{align*}
        Factorizing $\delta$ and recalling that $ \ell = a/b, $ we can rewrite the above expression as 
        \begin{align*}\Delta_\varphi &=  \frac{\nu \delta}{M }\frac{(2 - \ell) \ell  }{(1- \ell)^3} \bigg \{  1 -\frac{1 - e^{-(b-a)\delta}}{(b-a) \delta } + ( T - \delta) \frac{1 - e^{-(b-a)\delta} }{ \delta }   \bigg\} \\
        &=\frac{\nu \delta}{M }\frac{(2 - \ell) \ell  }{(1- \ell)^3} \bigg \{ 1+ [ (T- \delta) ( b-a) - 1 ]\frac{1 - e^{-(b-a)\delta}}{(b-a) \delta } \bigg\} \\
        &= \frac{\nu \delta}{M }\frac{(2 - \ell) \ell  }{(1- \ell)^3} \bigg \{ 1+ [ (T- \delta) b( 1-\ell) - 1 ]\frac{1 - e^{-b(1-\ell)\delta}}{b(1-\ell) \delta } \bigg\} ,
        \end{align*} 
        which gives \eqref{eq:mean_Haw}. 
        
        Finally, under the additional condition  $b(1-\ell)T>4,$ and using moreover that   $ \delta \le T/2 ,$ we have $ T- \delta \geq T/2 $ and that $ - 1 >- b(1-\ell)T/4 ,$ such that 
        \[ b(1- \ell ) (T- \delta ) - 1 \geq \frac14 b(1- \ell ) T .\] 
        Since $2 - \ell \geq 1,$ this gives the lower bound  
        \[
        \Delta_\varphi\geq   \frac{\nu\delta\ell}{M(1-\ell)^3} \frac{b(1-\ell)T}{4}  \frac{1-e^{-b(1-\ell)\delta}}{b(1-\ell)\delta}= \frac14  \frac{\nu \ell T}{M(1- \ell)^3}(1-e^{-b(1-\ell)\delta}) ,
        \]
        which is \eqref{eq:bound_Hawkes}.

    \subsection{Proof of Eq. \texorpdfstring{\eqref{eq:var_Hawkes_ind}}{}}
        Let us now focus on the variance of $\fp(X_1,X_2)$ for the model of independent neurons. The proof of the inequality \eqref{eq:var_Hawkes_ind} follows from the following intermediate result.
        \begin{lemma}\label{lem:var_Hawkes_ind}
            Let us consider the positive measures
            \begin{align*} \mathcal{R}_{\indep}(\dd u,\dd v)& = 2k_1(\dd u)k_1(\dd v), \\
     \mathcal{G}_{\indep}(\dd u,\dd v,\dd v') &= 4\ind_{v\neq v'}k_1(\dd u)k_2(\dd v,\dd v') + 2k_1(\dd u)k_1(\dd v)k_1(\dd v'),  
     \end{align*}
    and
                \begin{multline*} \mathcal{Q}_{\indep}(\dd u,\dd u',\dd v,\dd v') = 2\ind_{v\neq v'}k_1(\dd u)k_1(\dd u')k_2(\dd v,\dd v')\\
                + 2\ind_{u\neq u'}\ind_{v\neq v'}k_2(\dd u,\dd u')k_2(\dd v,\dd v').
                \end{multline*}
    
            Then the variance of $\fp(X_1,X_2)$ satisfies the decomposition
            \begin{equation}\label{eq:dec_var_indep}
            \begin{array}{l}
                \displaystyle \Vz\big[ \fp(\bX_1,\bX_2) \big] = \frac{1}{M^2}\int_{[0,T]^2}\ind_{|u-v|\leq\delta}\mathcal{R}_{\indep}(\dd u,\dd v) \vspace{0.20cm}
                \\
                \displaystyle \hspace{3cm} +\, \frac{1}{M^3}\int_{[0,T]^3}\ind_{|u-v|\leq\delta}\ind_{|u-v'|\leq\delta}\mathcal{G}_{\indep}(\dd u,\dd v,\dd v') \vspace{0.20cm} 
                \\
                \displaystyle \hspace{3cm} +\, \frac{1}{M^4}\int_{[0,T]^4}\ind_{|u-v|\leq\delta}\ind_{|u'-v'|\leq\delta}\mathcal{Q}_{\indep}(\dd u,\dd u',\dd v,\dd v').
            \end{array}
            \end{equation}
        \end{lemma}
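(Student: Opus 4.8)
The plan is to reduce the variance to two fourth-order moment integrals and then exploit the marking (thinning) structure of the network. First I would record that, under $\Pz$, the three spike trains $X^1_1, X^2_1, X^2_2$ are mutually independent, each distributed as an independent $1/M$-thinning of the univariate Hawkes process $N$ (keeping the points whose uniform label $J$ equals the prescribed neuron). Since $X^2_1$ and $X^2_2$ are i.i.d. and both independent of $X^1_1$, the pairs $(X^1_1,X^2_1)$ and $(X^1_1,X^2_2)$ have the same law, so $\Ez[\fp(X_1,X_2)]=0$ and hence $\Vz[\fp(X_1,X_2)]=\Ez[\fp(X_1,X_2)^2]$. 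Writing $A=\varphi(X^1_1,X^2_1)$ and $B=\varphi(X^1_1,X^2_2)$, expanding $(A-B)^2$ and using $\Ez[A^2]=\Ez[B^2]$ reduces the problem to
\[
\Vz[\fp(X_1,X_2)] = 2\Ez[A^2] - 2\Ez[AB].
\]

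Next I would express $\Ez[A^2]$ and $\Ez[AB]$ as fourfold integrals of products of second-order moment measures of the thinned processes against $\ind_{|u-v|\leq\delta}\ind_{|u'-v'|\leq\delta}$, using independence of the coordinates under $\Pz$ to factorize the expectations. The crucial input is the moment structure of a $1/M$-thinning of $N$: on the diagonal it contributes $\frac{1}{M}k_1$, while off the diagonal it contributes $\frac{1}{M^2}\bigl(k_2 + k_1\otimes k_1\bigr)$, the powers of $1/M$ recording whether two integration variables index the same point of $N$ (one surviving label) or two distinct points (two independent labels). Substituting this into $\Ez[A^2]$ I would split the four variables $u,u',v,v'$ according to the coincidence pattern $\{u=u'?\}\times\{v=v'?\}$, obtaining four groups of terms of orders $M^{-2}$, $M^{-3}$, $M^{-3}$, $M^{-4}$; for $\Ez[AB]$, where only $X^1_1$ is shared, only the pattern $\{u=u'?\}$ is relevant, giving terms of orders $M^{-3}$ and $M^{-4}$.

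I would then collect $2\Ez[A^2]-2\Ez[AB]$ by powers of $1/M$. The order $M^{-2}$ comes solely from the fully diagonal part of $\Ez[A^2]$ and yields $\mathcal{R}_{\indep}=2\,k_1\otimes k_1$. The decisive simplification at orders $M^{-3}$ and $M^{-4}$ is that the product measures $k_1\otimes k_1$ are absolutely continuous and therefore put no mass on the diagonals $\{v=v'\}$ or $\{u=u'\}$; this lets me insert or delete the corresponding $\ind_{\neq}$ indicators freely, so that the purely first-moment contributions of $\Ez[A^2]$ and of $\Ez[AB]$ cancel exactly, leaving only the mixed $k_1\otimes k_2$ and $k_2\otimes k_2$ terms that assemble into $\mathcal{G}_{\indep}$ and $\mathcal{Q}_{\indep}$. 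Finally the two $M^{-3}$ groups --- one neuron-$1$ point against two neuron-$2$ points, and two neuron-$1$ points against one neuron-$2$ point --- are identified by the change of variables exchanging the roles of the two coordinates (legitimate because both marginals are the same $1/M$-thinning and $\varphi$ is symmetric), which doubles the $k_1\otimes k_2$ contribution and produces the coefficient $4$ in $\mathcal{G}_{\indep}$.

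The main obstacle I anticipate is the bookkeeping of the diagonal/off-diagonal decomposition together with the correct powers of $1/M$: one must track precisely when two integration variables are forced to label the same point of $N$, and verify that every purely first-moment term cancels between $\Ez[A^2]$ and $\Ez[AB]$ once the harmless diagonal indicators have been adjusted. The remainder is a routine, if lengthy, matching of the surviving integrals with $\mathcal{R}_{\indep}$, $\mathcal{G}_{\indep}$ and $\mathcal{Q}_{\indep}$.
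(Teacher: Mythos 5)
Your strategy coincides with the paper's: write $\Vz[\fp(X_1,X_2)]=2\Ez[A^2]-2\Ez[AB]$ with $A=\varphi(X^1_1,X^2_1)$ and $B=\varphi(X^1_1,X^2_2)$, split the fourth-order stochastic integrals according to which integration variables index the same underlying point, use the $1/M$-thinning moment structure ($\tfrac1M k_1$ on the diagonal, $\tfrac1{M^2}(k_2+k_1\otimes k_1)$ off it), and regroup by powers of $1/M$; the identification of the two one-diagonal groups by exchangeability of the neurons, producing the coefficient $4$, is also exactly the paper's step.

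There is, however, one concrete error: the claim that at order $M^{-3}$ ``the purely first-moment contributions of $\Ez[A^2]$ and of $\Ez[AB]$ cancel exactly, leaving only the mixed $k_1\otimes k_2$ and $k_2\otimes k_2$ terms.'' Taken literally this would give $\mathcal{G}_{\indep}=4\ind_{v\neq v'}k_1(\dd u)k_2(\dd v,\dd v')$ with no triple-$k_1$ term, contradicting the statement you are proving. Count the copies: $2\Ez[A^2]$ contributes \emph{two} first-moment terms at order $M^{-3}$ (one from the pattern $u=u',\,v\neq v'$, one from $u\neq u',\,v=v'$), each equal after deleting the harmless $\ind_{\neq}$ and relabeling to
\[
\frac{2}{M^3}\int_{[0,T]^3}\ind_{|u-v|\leq\delta}\ind_{|u-v'|\leq\delta}\,k_1(\dd u)k_1(\dd v)k_1(\dd v'),
\]
whereas $-2\Ez[AB]$ contributes only \emph{one} such term with a minus sign, because in $AB$ the points $v\in X^2_1$ and $v'\in X^2_2$ come from independent processes and never coincide — there is a single $M^{-3}$ group there, as you yourself note. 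The net is $+2-(-0)$... more precisely $2+2-2=2$ copies, and this surviving contribution is exactly the $2k_1(\dd u)k_1(\dd v)k_1(\dd v')$ term in $\mathcal{G}_{\indep}$. At order $M^{-4}$ your cancellation is correct: the $k_2(\dd u,\dd u')\,k_1(\dd v)k_1(\dd v')$ and $k_1^{\otimes 4}$ pieces cancel between the two expectations, leaving $2\ind\ind\,k_2\otimes k_2$ and $2\ind_{v\neq v'}k_1(\dd u)k_1(\dd u')k_2(\dd v,\dd v')$ as stated. So the asymmetry you correctly identified — two one-diagonal groups in $\Ez[A^2]$ against one in $\Ez[AB]$ — is precisely what blocks the full cancellation; once the surviving term is kept, the rest of your bookkeeping assembles the stated $\mathcal{R}_{\indep}$, $\mathcal{G}_{\indep}$ and $\mathcal{Q}_{\indep}$.
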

        \medskip
        
        \begin{proof}
            The neurons are \emph{i.i.d} for this model, and we recall that 
            \[ \fp(X_1,X_2) = \int_{[0,T]^2}\ind_{|u-v|\leq\delta}\dd X^1_1(u)\dd X^2_1(v) - \int_{[0,T]^2}\ind_{|u-v|\leq\delta}\dd X^1_1(u)\dd X^2_2(v) . \]
            Then the random variable $\fp(X_1,X_2)$ is centered and
            \begin{align*}
                \Vz\big[\fp(X_1,X_2)\big] & = \Ez\left[ \big| \fp(X_1,X_2)\big|^2 \right] 
                \\
                & = \Ez\bigg[ \int_{[0,T]^4} \ind_{|u-v|\leq\delta}\ind_{|u'-v'|\leq\delta}\dd X^1_1(u)\dd X^1_1(u')\dd X^2_1(v)\dd X^2_1(v') 
                \\
                &\qquad +  \int_{[0,T]^4}\ind_{|u-v|\leq\delta}\ind_{|u'-v'|\leq\delta}\dd X^1_1(u)\dd X^1_1(u')\dd X^2_2(v)\dd X^2_2(v')
                \\
                &\qquad - 2\int_{[0,T]^4}\ind_{|u-v|\leq\delta}\ind_{|u'-v'|\leq\delta}\dd X^1_1(u)\dd X^1_1(u')\dd X^2_1(v)\dd X^2_2(v') \bigg]
                \\
                & = 2\Ez\bigg[ \int_{[0,T]^4} \ind_{|u-v|\leq\delta}\ind_{|u'-v'|\leq\delta}\dd X^1_1(u)\dd X^1_1(u')\dd X^2_1(v)\dd X^2_1(v')
                \\
                &\qquad - \int_{[0,T]^4}\ind_{|u-v|\leq\delta}\ind_{|u'-v'|\leq\delta}\dd X^1_1(u)\dd X^1_1(u')\dd X^2_1(v)\dd X^2_2(v') \bigg] ,
            \end{align*}
            since 
            the stochastic integral
            $\int\ind_{|u-v|\leq\delta}\ind_{|u'-v'|\leq\delta}\dd X^1_1(u)\dd X^1_1(u')\dd X^2_1(v)\dd X^2_1(v')$ has under the independent model the same distribution as 
            $\int\ind_{|u-v|\leq\delta}\ind_{|u'-v'|\leq\delta}\dd X^1_1(u)\dd X^1_1(u')\dd X^2_2(v)\dd X^2_2(v')$. By distinguishing simultaneous and non simultaneous jump times, it follows that
            \begin{align*}
                    & \Vz\big[ \fp(\bX_1,\bX_2) \big] \\
                    & = 2\Ez\bigg[ \int_{[0,T]^3}\ind_{|u-v|\leq\delta}\ind_{|u-v'|\leq\delta}\dd X^1_1(u)\dd X^2_1(v)\dd X^2_1(v') 
                    \\
                    &\qquad - \int_{[0,T]^3}\ind_{|u-v|\leq\delta}\ind_{|u-v'|\leq\delta}\dd X^1_1(u)\dd X^2_1(v)\dd X^2_2(v') \bigg] 
                    \\
                    &~ + 2\Ez\bigg[ \int_{[0,T]^4}\ind_{|u-v|\leq\delta}\ind_{|u'-v'|\leq\delta}\ind_{u\neq u'}\dd X^1_1(u)\dd X^1_1(u')\dd X^2_1(v)\dd X^2_1(v') 
                    \\
                    &\qquad - \int_{[0,T]^4}\ind_{|u-v|\leq\delta}\ind_{|u'-v'|\leq\delta}\ind_{u\neq u'}dX_1^1(u)\dd X^1_1(u')\dd X^2_1(v)\dd X^2_2(v') \bigg] ,
            \end{align*}
            and then
            \begin{align*}
                    &\Vz\big[ \fp(\bX_1,\bX_2) \big] \\
                    & = 2\Ez\bigg[ \int_{[0,T]^2}\ind_{|u-v|\leq\delta}\dd X^1_1(u)\dd X^2_1(v) 
                    \\
                    &\qquad + \int_{[0,T]^3}\ind_{|u-v|\leq\delta}\ind_{|u-v'|\leq\delta}\ind_{v\neq v'}\dd X^1_1(u)\dd X^2_1(v)\dd X^2_1(v') 
                    \\
                    &\qquad - \int_{[0,T]^3}\ind_{|u-v|\leq\delta}\ind_{|u-v'|\leq\delta}\dd X^1_1(u)\dd X^2_1(v)\dd X^2_2(v') \bigg] 
                    \\
                    &~ + 2\Ez\bigg[ \int_{[0,T]^3}\ind_{|u-v|\leq\delta}\ind_{|u'-v|\leq\delta}\ind_{u\neq u'}\dd X^1_1(u)\dd X^1_1(u')\dd X^2_1(v) 
                    \\
                    &\qquad + \int_{[0,T]^4}\ind_{|u-v|\leq\delta}\ind_{|u'-v'|\leq\delta}\ind_{u\neq u'}\ind_{v\neq v'}\dd X^1_1(u)\dd X^1_1(u')\dd X^2_1(v)\dd X^2_1(v')
                    \\
                    &\qquad - \int_{[0,T]^4}\ind_{|u-v|\leq\delta}\ind_{|u'-v'|\leq\delta}\ind_{u\neq u'}\dd X^1_1(u)\dd X^1_1(u')\dd X^2_1(v)\dd X^2_2(v') \bigg].
            \end{align*}
            We deduce that
            \begin{align*}
                \Vz\big[ \fp(\bX_1,\bX_2) \big] & = 2\bigg\{ \frac{1}{M^2}\int_{[0,T]^2}\ind_{|u-v|\leq\delta}\E[\dd N_u]\E[\dd N_v] 
                \\
                &~ + \frac{1}{M^3}\int_{[0,T]^3}\ind_{|u-v|\leq\delta}\ind_{|u-v'|\leq\delta}\ind_{v\neq v'}\E[\dd N_u]k_2(\dd v,\dd v') 
                \\
                &~ + \frac{1}{M^3}\int_{[0,T]^3}\ind_{|u-v|\leq\delta}\ind_{|u'-v|\leq\delta}\ind_{u\neq u'}\E[\dd N_v]\E[\dd N_u\dd N_{u'}] 
                \\
                &~ + \frac{1}{M^4}\int_{[0,T]^4}\ind_{|u-v|\leq\delta}\ind_{|u'-v'|\leq\delta}\ind_{u\neq u'}\ind_{v\neq v'}\E[\dd N_u\dd N_{u'}]k_2(\dd v,\dd v') \bigg\}.
            \end{align*}
            Using the fact that $\E[\dd N_u\dd N_v] = k_2(\dd u,\dd v) + \E[\dd N_u]\E[\dd N_v]$ and $\E[\dd N_u]=k_1(\dd u)$, it follows that the term in the third integral that is related to the second order cumulant measure is equal to the second integral. We obtain
            \begin{align*}
                \Vz\big[ \fp(\bX_1,\bX_2) \big] & = 2\bigg\{ \frac{1}{M^2}\int_{[0,T]^2}\ind_{|u-v|\leq\delta}k_1(\dd u)k_1(\dd v) 
                \\
                &~ + \frac{2}{M^3}\int_{[0,T]^3}\ind_{|u-v|\leq\delta}\ind_{|u-v'|\leq\delta}\ind_{v\neq v'}k_1(\dd u)k_2(\dd v,\dd v') 
                \\
                &~ + \frac{1}{M^3}\int_{[0,T]^3}\ind_{|u-v|\leq\delta}\ind_{|u'-v|\leq\delta}k_1(\dd u)k_1(\dd v)k_1(\dd u') 
                \\
                &~ + \frac{1}{M^4}\int_{[0,T]^4}\ind_{|u-v|\leq\delta}\ind_{|u'-v'|\leq\delta}\ind_{u\neq u'}\ind_{v\neq v'}k_1(\dd u)k_1(\dd u')k_2(\dd v,\dd v')  
                \\
                &~ + \frac{1}{M^4}\int_{[0,T]^4}\ind_{|u-v|\leq\delta}\ind_{|u'-v'|\leq\delta}\ind_{u\neq u'}\ind_{v\neq v'}k_2(\dd u,\dd u')k_2(\dd v,\dd v') \bigg\},
            \end{align*}
            ending the proof.
        \end{proof}
        
        \begin{proof}[Proof of (\ref{eq:var_mf_limit})]
 Recall that in the mean-field limit, neurons $1$ and $2$ are independent and identically distributed,  and their spike trains are Poisson processes with intensity $\frac{\nu}{1-\ell}$. The variance $\V_0$ of $\fp(X_1,X_2)$ under this limit model can then be deduced  from \eqref{eq:Vardiff}.
        \end{proof}
        
        Coming back to the Hawkes model, we deduce from \eqref{eq:dec_var_indep} in Lemma~\ref{lem:var_Hawkes_ind} and \eqref{eq:cumul_order-1} in Lemma~\ref{lem:cumulant} that
        \begin{align*}
                \Vz\big[ \fp(\bX_1,\bX_2) \big] - \V_0 & = 2\bigg\{ \frac{2}{M^3}\int_{[0,T]^3}\ind_{|u-v|\leq\delta}\ind_{|u-v'|\leq\delta}\ind_{v\neq v'}k_1(\dd u)k_2(\dd v,\dd v') 
                \\
                &~ +\, \frac{1}{M^4}\int_{[0,T]^4}\ind_{|u-v|\leq\delta}\ind_{|u'-v'|\leq\delta}\ind_{v\neq v'}k_1(\dd u)k_1(\dd u')k_2(\dd v,\dd v') 
                \\
                &~ +\, \frac{1}{M^4}\int_{[0,T]^4}\ind_{|u-v|\leq\delta}\ind_{|u'-v'|\leq\delta}\ind_{u\neq u'}\ind_{v\neq v'}k_2(\dd u,\dd u')k_2(\dd v,\dd v') \bigg\},
        \end{align*}
        which is non negative. Using the bound \eqref{eq:cumul_order-1234} in Lemma~\ref{lem:cumulant} for the second order cumulant, we obtain for a certain constant $C>0$ (that can change from one line to another)  
        \begin{align*}
            & \Vz\big[ \fp(\bX_1,\bX_2) \big] - \V_0 
            \\
            & \leq C\bigg\{ \frac{a}{M}\frac{\nu^2}{(1-\frac{a}{b})^3}\int_{[0,T]^3}e^{-(b-a)|v-v'|}\ind_{|u-v|\leq\delta}\ind_{|u-v'|\leq\delta}\dd u\dd v\dd v'
            \\
            &~ + \frac{a}{M}\frac{\nu^3}{(1-\frac{a}{b})^4}\int_{[0,T]^4}e^{-(b-a)|v-v'|}\ind_{|u-v|\leq\delta}\ind_{|u'-v'|\leq\delta}\dd u \dd u' \dd v \dd v'
            \\
            &~ + \frac{1}{M^2}\frac{a^2\nu^2}{(1-\frac{a}{b})^4} \int_{[0,T]^4}e^{-(b-a)|u-u'|}e^{-(b-a)|v-v'|}\ind_{|u-v|\leq\delta}\ind_{|u'-v'|\leq\delta}\dd u\dd u'\dd v\dd v' \bigg\}
            \\
            & \leq C\bigg\{ \frac{a}{M}\frac{\nu^2}{(1-\frac{a}{b})^3}\int_{[0,T]^2}e^{-(b-a)|v-v'|}\underbrace{\left[ (v\wedge v'+\delta)\wedge T - (v\vee v' - \delta)_+ \right]_+}_{\leq (2\delta - |v-v'|)_+}\dd v\dd v'
            \\
            &~ + \frac{a}{M}\frac{\nu^2}{(1-\frac{a}{b})^4}\left(\nu+\frac{a}{M}\right) \times \\
            & \quad   \times \int_{[0,T]^2}e^{-(b-a)|v-v'|}\underbrace{\big[ (v+\delta)\wedge T - (v-\delta)_+ \big]}_{\leq 2\delta}\underbrace{\big[ (v'+\delta)\wedge T - (v'-\delta)_+ \big]}_{\leq 2\delta}\dd v\dd v' \bigg\}.
        \end{align*}
        It follows that
        \begin{align*}
            \Vz\big[ \fp(\bX_1,\bX_2) \big] - \V_0 & \leq C\bigg\{ \frac{a}{M}\frac{\nu^2}{(1-\frac{a}{b})^3}\int_0^T\int_0^Te^{-(b-a)|v-v'|} (2\delta - |v-v'|)_+\dd v\dd v'
            \\
            &~ + \frac{a}{M}\frac{\nu^2}{(1-\frac{a}{b})^4}\left(\nu+\frac{a}{M}\right)\times 4\delta^2\int_0^T\int_0^Te^{-(b-a)|v-v'|}\dd v\dd v' \bigg\}
            \\
            & = C\bigg\{ \frac{a}{M}\frac{\nu^2}{(1-\frac{a}{b})^3}\times 2\int_0^T\int_0^{v\wedge(2\delta)}e^{-(b-a)z} (2\delta - z)dz\dd v
            \\
            &~ + \frac{a}{M}\frac{\nu^2}{(1-\frac{a}{b})^4}\left(\nu+\frac{a}{M}\right)\times 4\delta^2\int_0^T\int_0^Te^{-(b-a)|v-v'|}\dd v\dd v' \bigg\}\\
            & \le C\bigg\{ \frac{a}{M}\frac{\nu^2}{(1-\frac{a}{b})^3}\times 2\int_0^T\int_0^{2\delta}e^{-(b-a)z} (2\delta )dz\dd v
            \\
            &~ + \frac{a}{M}\frac{\nu^2}{(1-\frac{a}{b})^4}\left(\nu+\frac{a}{M}\right)\times 4\delta^2\int_0^T\int_0^Te^{-(b-a)|v-v'|}\dd v\dd v' \bigg\}.
        \end{align*}
        Since 
        \[ (2\delta ) \int_0^T\int_0^{2\delta}e^{-(b-a)z} dz\dd v \le 2 \delta T \frac{1}{b-a} ( 1 - e^{ - (b-a) 2 \delta }) \le 4 \delta^2 T  ,
        \] 
        where we have used that $ 1 - e^{ - x}\le x, $
        this implies  that
        \begin{align*}
            \Vz\big[ \fp(\bX_1,\bX_2) \big] - \V_0 & \leq C\bigg\{ \frac{a}{M}\frac{\nu^2}{(1-\frac{a}{b})^3} 
            \delta^2 T  
            \\
            &~ + \frac{a}{M}\frac{\nu^2}{(1-\frac{a}{b})^4}\left(\nu+\frac{a}{M}\right)\times \frac{\delta^2T}{b-a}\left[ 1 - \frac{1-e^{-(b-a)T}}{(b-a)T} \right] \bigg\}.
        \end{align*}
        Using the inequality $1 - \frac{1-e^{-x}}{x} \leq 1\wedge\frac{x}{2}$ for any $x>0$, we obtain
        \begin{align*}
            & \Vz\big[ \fp(\bX_1,\bX_2) \big] - \V_0 
            \\
            & \leq C\bigg\{ \frac{a}{M}\frac{\nu^2}{(1-\frac{a}{b})^3} \delta^2 T + \frac{a}{M}\frac{\nu^2}{(1-\frac{a}{b})^4}\left(\nu+\frac{a}{M}\right)\times \delta^2T\left( \frac{T}{2}\wedge\frac{1}{b-a} \right) \bigg\}
            \\
            & = C\frac{aT}{M}\frac{\nu^2\delta^2}{(1-\frac{a}{b})^3}\left\{ 1+ \frac{\nu+\frac{a}{M}}{1-\frac{a}{b}}\left( \frac{T}{2}\wedge\frac{1}{b-a} \right) \right\}
            \\
            & \leq C\frac{aT}{M}\frac{\nu^2\delta^2}{(1-\frac{a}{b})^3}\left\{ 1 + \frac{\nu+\frac{a}{M}}{b(1-\frac{a}{b})^2} \right\} = C\frac{aT}{M}\frac{\nu^2\delta^2}{(1-\ell)^3}\left\{ 1 + \frac{\nu+\frac{a}{M}}{b (1-\ell)^2} \right\},
        \end{align*}
        which is \eqref{eq:var_Hawkes_ind} and ends the proof.
    
    \subsection{Proof of Eq. \texorpdfstring{\eqref{eq:var_Hawkes}}{}}
        We are now interested in the variance of $\fp(X_1,X_2)$ for the observed model, that is the one for which the neurons are in the same network. Then, the proof of the bound \eqref{eq:var_Hawkes} follows from the following intermediate result.
        \begin{lemma}
            Let us consider the positive measures defined by
            \begin{align*}
                & \mathcal{R}(\dd u,\dd v) = \ind_{u\neq v}k_2(\dd u,\dd v) + 2k_1(\dd u)k_1(\dd v),
                \\
                & \mathcal{G}(\dd u,\dd v,\dd v') = 2\ind_{\Delta_3}\big\{ k_3(\dd u,\dd v,\dd v') + 2k_1(\dd u)k_2(\dd v,\dd v') 
                \\
                & \qquad + k_1(\dd v)k_2(\dd u,\dd v') + k_1(\dd u)k_1(\dd v)k_1(\dd v') \big\},
                \\
                & \mathcal{Q}(\dd u,\dd u',\dd v,\dd v')  = \ind_{\Delta_4}\big\{ k_4(\dd u,\dd u',\dd v,\dd v') + 2k_1(\dd u)k_3(\dd u',\dd v,\dd v') 
                \\
                & \qquad + 3k_2(\dd u,\dd u')k_2(\dd v,\dd v') + 2k_1(\dd u)k_1(\dd u')k_2(\dd v,\dd v') \big\}.
            \end{align*}
            Then the variance of $\fp(X_1,X_2)$ satisfies the decomposition
            \begin{equation}\label{eq:Var_lemma}
            \begin{array}{l}
                \displaystyle \Vo\big[ \fp(\bX_1,\bX_2) \big] = \frac{1}{M^2}\int_{[0,T]^2}\ind_{|u-v|\leq\delta}\mathcal{R}(\dd u,\dd v) \vspace{0.20cm}
                \\
                \displaystyle \hspace{2.6cm} +\, \frac{1}{M^3}\int_{[0,T]^3}\ind_{|u-v|\leq\delta}\ind_{|u-v'|\leq\delta}\mathcal{G}(\dd u,\dd v,\dd v') \vspace{0.20cm} 
                \\
                \displaystyle \hspace{2.6cm} +\, \frac{1}{M^4}\int_{[0,T]^4}\ind_{|u-v|\leq\delta}\ind_{|u'-v'|\leq\delta}\mathcal{Q}(\dd u,\dd u',\dd v,\dd v').
            \end{array}
            \end{equation}
        \end{lemma}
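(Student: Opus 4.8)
The plan is to follow the proof of Lemma~\ref{lem:var_Hawkes_ind} almost verbatim; the only structural difference is that under the observed model the two coordinates of one trial are built from the \emph{same} univariate Hawkes process, so that the two neurons are genuinely correlated and cumulants up to order four will appear. First I would fix the stochastic integral representation already used in Appendix~\ref{appendixC2}: writing the first trial as the univariate process $N$ carrying i.i.d. uniform labels $(J_i)$ and the second trial as an independent copy $\widetilde{N}$ with independent labels $(\widetilde{J}_i)$, I set
\[ A := \varphi(X^1_1,X^2_1) = \int_{[0,T]^2}\ind_{|u-v|\leq\delta}\ind_{J_{N_{u-}+1}=1}\ind_{J_{N_{v-}+1}=2}\dd N_u\dd N_v, \]
\[ B := \varphi(X^1_1,X^2_2) = \int_{[0,T]^2}\ind_{|u-v|\leq\delta}\ind_{J_{N_{u-}+1}=1}\ind_{\widetilde{J}_{\widetilde{N}_{v-}+1}=2}\dd N_u\dd\widetilde{N}_v, \]
so that $\fp(X_1,X_2) = A-B$ and, since the two trials are independent, $\Eo[A]-\Eo[B] = \Delta_\varphi$. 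The whole computation then reduces to expanding
\[ \Vo\big[\fp(X_1,X_2)\big] = \Eo[A^2] - 2\Eo[AB] + \Eo[B^2] - \Delta_\varphi^2. \]

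Next I would compute the three second moments by conditioning on the underlying point processes and integrating out the labels, exactly as in the independent case. The elementary fact driving everything is that a single jump of $N$ carries a single label: in any product of indicators, a coincidence between a time forced to be labelled $1$ and a time forced to be labelled $2$ kills the contribution, whereas a coincidence inside the same label group survives but lowers the power of $1/M$ by one. Performing the diagonal/off-diagonal splitting of the four integration variables, I would list the admissible coincidence patterns and sort the surviving integrals by their order in $1/M$ (namely $1/M^2$, $1/M^3$, $1/M^4$) and by the order of the moment measure $\E[\dd N_{t_1}\cdots\dd N_{t_l}]$ that multiplies them. For $\Eo[A^2]$ all four times live on $N$, giving up to a fourth order moment measure; for $\Eo[AB]$ three times live on $N$ and one on $\widetilde{N}$; for $\Eo[B^2]$ the two label-$1$ times live on $N$ and the two label-$2$ times on $\widetilde{N}$, so by independence the moments factor across the two groups and never exceed a product of two second order moments.

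I would then turn every moment measure into cumulant measures via the dual formula \eqref{eq:dualformula} and regroup order by order. Two simplifications organize the algebra. The subtraction of $\Delta_\varphi^2$ (whose value is known from \eqref{eq:mean_cumul_hawkes}) exactly removes the partition $\{u,v\}\{u',v'\}$ of $\Eo[A^2]$, i.e. the piece $k_2(\dd u,\dd v)k_2(\dd u',\dd v')$ pairing each neuron-$1$ time with its own neuron-$2$ partner. More importantly, many of the remaining cumulant products are \emph{equal as integrals} once tested against the window integrand $\ind_{|u-v|\leq\delta}\ind_{|u'-v'|\leq\delta}$, because this integrand is invariant under the symmetries $u\leftrightarrow v$, $u'\leftrightarrow v'$ and $(u,v)\leftrightarrow(u',v')$; summing coefficients over the orbits of this symmetry group is what produces the stated integer multiplicities and leaves precisely $\mathcal{R}$, $\mathcal{G}$, $\mathcal{Q}$. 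As a quick consistency check, at the $1/M^2$ scale only $\Eo[A^2]$ and $\Eo[B^2]$ contribute, giving $\ind_{u\neq v}\big(k_2(\dd u,\dd v)+k_1(\dd u)k_1(\dd v)\big)$ and $k_1(\dd u)k_1(\dd v)$ respectively; since $k_1\otimes k_1$ charges no diagonal their sum is $\ind_{u\neq v}k_2(\dd u,\dd v)+2\,k_1(\dd u)k_1(\dd v) = \mathcal{R}(\dd u,\dd v)$.

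The main obstacle is the bookkeeping of these last two steps, carried out simultaneously across $\Eo[A^2]$, $\Eo[AB]$ and $\Eo[B^2]$: one must track every admissible coincidence pattern, convert it through the dual formula, and verify that after the $\Delta_\varphi^2$ cancellation and the orbit-wise grouping the disconnected terms vanish (e.g. the $k_1^{\otimes 4}$ contributions cancel as $1+1-2=0$) and the multiplicities come out exactly as written, such as the coefficient $3$ in front of $k_2(\dd u,\dd u')k_2(\dd v,\dd v')$ in $\mathcal{Q}$ (which collects $1$ from the within-group pairing of $\Eo[A^2]$, $1$ from the surviving cross pairing of $\Eo[A^2]$, and $1$ from $\Eo[B^2]$). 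Everything else is the same routine diagonal/off-diagonal analysis already used for the independent model in Lemma~\ref{lem:var_Hawkes_ind}.
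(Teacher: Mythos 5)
Your proposal is correct and follows essentially the same route as the paper: expand the second moment of $\fp(X_1,X_2)$, split the four integration variables into diagonal/off-diagonal configurations (integrating out the thinning labels to produce the $1/M^2$, $1/M^3$, $1/M^4$ scales), convert the resulting moment measures of the univariate process $N$ into cumulants via the dual formula \eqref{eq:dualformula}, and use the invariance of the window $\ind_{|u-v|\leq\delta}\ind_{|u'-v'|\leq\delta}$ under $u\leftrightarrow v$, $u'\leftrightarrow v'$ and $(u,v)\leftrightarrow(u',v')$ to collect the multiplicities and cancel the disconnected and negative pieces. Your spot checks (the coefficient $3$ on $k_2\otimes k_2$, the $1+1-2=0$ cancellation of $k_1^{\otimes 4}$, the identification of $\mathcal{R}$ at order $1/M^2$, and the absorption of $\Delta_\varphi^2$ by the partition $\{\{u,v\},\{u',v'\}\}$) all agree with the paper's bookkeeping.
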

        
        \begin{proof}
            The proof of this result uses the same ideas as the one of Lemma~\ref{lem:var_Hawkes_ind} taking into account the fact that $X^1_1$ and $X^2_1$ belong to the same network under the observed model. The moment of order 2 of $\fp(X_1,X_2)$ is then given by 
            \begin{align*}
                \Eo\left[ \big|\fp(\bX_1,\bX_2)\big|^2 \right] & = \Eo\bigg[ \int_{[0,T]^4}\ind_{|u-v|\leq\delta}\ind_{|u'-v'|\leq\delta}\dd X^1_1(u)\dd X^1_1(u')\dd X^2_1(v)\dd X^2_1(v') 
                \\
                &\qquad + \int_{[0,T]^4}\ind_{|u-v|\leq\delta}\ind_{|u'-v'|\leq\delta}\dd X^1_1(u)\dd X^1_1(u')\dd X^2_2(v)\dd X^2_2(v') 
                \\
                &\qquad - 2\int_{[0,T]^4}\ind_{|u-v|\leq\delta}\ind_{|u'-v'|\leq\delta}\dd X^1_1(u)\dd X^1_1(u')\dd X^2_1(v)\dd X^2_2(v') \bigg],
            \end{align*}
            and by distinguishing simultaneous jumps, we get
            \begin{align*}
                & \Eo\left[ \big|\fp(\bX_1,\bX_2)\big|^2 \right] = \Eo\bigg[ \int_{[0,T]^2}\ind_{|u-v|\leq\delta}\dd X^1_1(u)\dd X^2_1(v) \\
                & + \int_{[0,T]^2}\ind_{|u-v|\leq\delta}\ind_{|u-v'|\leq\delta}\ind_{v\neq v'}\dd X^1_1(u)\dd X^2_1(v)\dd X^2_1(v') 
                \\
                &+ \int_{[0,T]^2}\ind_{|u-v|\leq\delta}\dd X^1_1(u)\dd X^2_2(v) \\
                &  + \int_{[0,T]^3}\ind_{|u-v|\leq\delta}\ind_{|u-v'|\leq\delta}\ind_{v\neq v'}\dd X^1_1(u)\dd X^2_2(v)\dd X^2_2(v')
                \\ 
                & - 2\int_{[0,T]^3}\ind_{|u-v|\leq\delta}\ind_{|u-v'|\leq\delta}\dd X^1_1(u)\dd X^2_1(v)\dd X^2_2(v') 
                \\
                & + \int_{[0,T]^3}\ind_{|u-v|\leq\delta}\ind_{|u'-v|\leq\delta}\ind_{u\neq u'}\dd X^1_1(u)\dd X^1_1(u')\dd X^2_1(v) 
                \\
                & + \int_{[0,T]^4}\ind_{|u-v|\leq\delta}\ind_{|u'-v'|\leq\delta}\ind_{u\neq u'}\ind_{v\neq v'}\dd X^1_1(u)\dd X^1_1(u')\dd X^2_1(v)\dd X^2_1(v')
                \\
                & + \int_{[0,T]^3}\ind_{|u-v|\leq\delta}\ind_{|u'-v|\leq\delta}\ind_{u\neq u'}\dd X^1_1(u)\dd X^1_1(u')\dd X^2_2(v) 
                \\
                & + \int_{[0,T]^4}\ind_{|u-v|\leq\delta}\ind_{|u'-v'|\leq\delta}\ind_{u\neq u'}\ind_{v\neq v'}\dd X^1_1(u)\dd X^1_1(u')\dd X^2_2(v)\dd X^2_2(v') 
                \\
                & - 2\int_{[0,T]^4}\ind_{|u-v|\leq\delta}\ind_{|u'-v'|\leq\delta}\ind_{u\neq u'}\dd X^1_1(u)\dd X^1_1(u')\dd X^2_1(v)\dd X^2_2(v') \bigg] .
            \end{align*}
            Since ${\mathcal L}(X^1_1,X^2_1) = {\mathcal L}(X^2_1,X^1_1)$ by exchangeability of the neurons, the random variables given by \[\int\ind_{|u-v|\leq\delta}\ind_{|u-v'|\leq\delta}\ind_{v\neq v'}\dd X^1_1(u)\dd X^2_1(v)\dd X^2_1(v')\] and \[\int\ind_{|u-v|\leq\delta}\ind_{|u'-v|\leq\delta}\ind_{u\neq u'}\dd X^1_1(u)\dd X^1_1(u')\dd X^2_1(v)\] 
            have the same distribution. The same reason implies that the random variables 
            \begin{align*}
            \int\ind_{|u-v|\leq\delta}\ind_{|u-v'|\leq\delta}\ind_{v\neq v'}\dd X^1_1(u)\dd X^2_2(v)\dd X^2_2(v')\\
            \text{ and }\quad 
            \int\ind_{|u-v|\leq\delta}\ind_{|u'-v|\leq\delta}\ind_{u\neq u'}\dd X^1_1(u)\dd X^1_1(u')\dd X^2_2(v)
            \end{align*}
            have the same distribution. As a consequence, we obtain
            \begin{align*}
                &\Eo\left[ \big|\fp(\bX_1,\bX_2)\big|^2 \right] \\
                & = \Eo\bigg[ \int\ind_{|u-v|\leq\delta}\dd X^1_1(u)\dd X^2_1(v) + 2\int\ind_{|u-v|\leq\delta}\ind_{|u-v'|\leq\delta}\ind_{v\neq v'}\dd X^1_1(u)\dd X^2_1(v)\dd X^2_1(v') 
                \\
                & + \int\ind_{|u-v|\leq\delta}\dd X^1_1(u)\dd X^2_2(v) + 2\int\ind_{|u-v|\leq\delta}\ind_{|u-v'|\leq\delta}\ind_{v\neq v'}\dd X^1_1(u)\dd X^2_2(v)\dd X^2_2(v')
                \\ 
                & - 2\int\ind_{|u-v|\leq\delta}\ind_{|u-v'|\leq\delta}\dd X^1_1(u)\dd X^2_1(v)\dd X^2_2(v') 
                \\
                & + \int\ind_{|u-v|\leq\delta}\ind_{|u'-v'|\leq\delta}\ind_{u\neq u'}\ind_{v\neq v'}\dd X^1_1(u)\dd X^1_1(u')\dd X^2_1(v)\dd X^2_1(v')
                \\
                & + \int\ind_{|u-v|\leq\delta}\ind_{|u'-v'|\leq\delta}\ind_{u\neq u'}\ind_{v\neq v'}\dd X^1_1(u)\dd X^1_1(u')\dd X^2_2(v)\dd X^2_2(v') 
                \\
                & - 2\int\ind_{|u-v|\leq\delta}\ind_{|u'-v'|\leq\delta}\ind_{u\neq u'}\dd X^1_1(u)\dd X^1_1(u')\dd X^2_1(v)\dd X^2_2(v') \bigg] .
            \end{align*}
            Then using \eqref{eq:mean_cumul_hawkes}, that is, 
            \[ 
            \Eo\big[ \fp(X_1,X_2) \big] = \frac{1}{M^2}\int_{[0,T]^2}\ind_{|u-v|\leq\delta}\ind_{u\neq v}k_2(\dd u,\dd v), 
            \]
            it follows that
            \begin{align*}
                \Vo\big[ \fp(\bX_1,\bX_2) \big] & = \frac{1}{M^2}\int_{[0,T]}\ind_{|u-v|\leq\delta}\mathcal{R}(\dd u,\dd v)
                \\
                &~ + \frac{1}{M^3}\int_{[0,T]^3}\ind_{|u-v|\leq\delta}\ind_{|u-v'|\leq\delta}\mathcal{G}(\dd u,\dd v,\dd v') 
                \\
                &~ + \frac{1}{M^4}\int_{[0,T]^4}\ind_{|u-v|\leq\delta}\ind_{|u'-v'|\leq\delta}\mathcal{Q}(\dd u,\dd u',\dd v,\dd v') ,
            \end{align*}
            where we set 
            \begin{align*}
                \mathcal{R}(\dd u,\dd v) &  = \ind_{u\neq v}\E[\dd N_u\dd N_v] + \E[\dd N_u]\E[\dd N_v] ,
                \\
                \mathcal{G}(\dd u,\dd v,\dd v') & = 2\ind_{\Delta_3}\bigg\{ \E[\dd N_u\dd N_v\dd N_{v'}] + \E[\dd N_u]\E[\dd N_v\dd N_{v'}] \\
                & - \E[\dd N_{v'}]\E[\dd N_u\dd N_v] \bigg\} ,
                \\
                \mathcal{Q}(\dd u,\dd u',\dd v,\dd v') & = \ind_{\Delta_4}\bigg\{\E[\dd N_u\dd N_{u'}\dd N_v\dd N_{v'}] + \E[\dd N_u\dd N_{u'}]\E[\dd N_v\dd N_{v'}]
                \\
                &~ - 2\E[\dd N_{v'}]\E[\dd N_u\dd N_{u'}\dd N_{v}] - k_2(\dd u,\dd v)k_2(\dd u',\dd v') \bigg\}.
            \end{align*}
            
            Since $k_1(\dd u) = \E[\dd N_u]$ and $\E[\dd N_u\dd N_v] = k_2(\dd u,\dd v) + k_1(\dd u)k_1(\dd v)$, we get
            \[  \mathcal{R}(\dd u, \dd v) = \ind_{u\neq v}k_2(\dd u,\dd v) + 2k_1(\dd u)k_1(\dd v). \]
            In addition, using \eqref{eq:dualformula},
            \begin{align*}
                \mathcal{G}(\dd u,\dd v,\dd v') & = 2\ind_{\Delta_3}\bigg\{ k_3(\dd u,\dd v,\dd v') + k_1(\dd u)k_2(\dd v,\dd v') + k_1(\dd v)k_2(\dd u,\dd v') 
                \\
                &\quad + k_1(\dd v')k_2(\dd u,\dd v) + k_1(\dd u)k_1(\dd v)k_1(\dd v') 
                \\
                &\quad + k_1(\dd u)\bigg( k_2(\dd v,\dd v') + k_1(\dd v)k_1(\dd v') \bigg)
                \\
                &\quad - k_1(\dd v')\bigg( k_2(\dd u,\dd v) + k_1(\dd u)k_1(\dd v) \bigg) \bigg\} ,
            \end{align*}
            such that
            \begin{align*}
                \mathcal{G}(\dd u,\dd v,\dd v') & = 2\ind_{\Delta_3}\bigg\{ k_3(\dd u,\dd v,\dd v') + 2k_1(\dd u)k_2(\dd v,\dd v')
                \\
                &~ + k_1(\dd v)k_2(\dd u,\dd v') + k_1(\dd u)k_1(\dd v)k_1(\dd v') \bigg\}  .
               \end{align*}
            The following steps are devoted to obtain a simpler representation of the measure $\mathcal{Q}(\dd u, \dd u, \dd v, \dd v')$. Using \eqref{eq:cumrec},  we get
            \begin{align*}
                \mathcal{Q}(\dd u,\dd u',\dd v,\dd v') & = \ind_{\Delta_4}\bigg\{ \sum_{\substack{\mathbf{p} \textrm{ partition of }\\ \{u,u',v,v'\}}}\bigotimes_{B\in\mathbf{p}}k_{\#B}(\bigotimes_{u\in B}\dd u) 
                \\
                & + \bigg( k_2(\dd u,\dd u') + k_1(\dd u)k_1(\dd u') \bigg)\bigg( k_2(\dd v,\dd v') + k_1(\dd v)k_1(\dd v') \bigg) 
                \\
                & - 2k_1(\dd v')\sum_{\substack{\mathbf{p} \textrm{ partition of }\\ \{u,u',v\}}}\bigotimes_{B\in\mathbf{p}}k_{\#B}(\bigotimes_{u\in B}\dd u) - k_2(\dd u,\dd v)k_2(\dd u',\dd v') \bigg\}.
            \end{align*}
            We extract from the first sum the partitions that contain the singleton $\{v'\}$. The sum over this subset of partitions gives $k_1(\dd v')\sum_{\substack{\mathbf{p} \textrm{ partition of }\\ \{u,u',v\}}}\bigotimes_{B\in\mathbf{p}}k_{\#B}(\bigotimes_{u\in B}\dd u)$, and then
            \begin{align*}
                &\mathcal{Q}(\dd u,\dd u',\dd v,\dd v')  = \ind_{\Delta_4}\bigg\{ \sum_{\substack{\mathbf{p} \textrm{ partition of }\\ \{u,u',v,v'\}\\ \textrm{s.t }\{v'\}\notin\mathbf{p}}}\bigotimes_{B\in\mathbf{p}}k_{\#B}(\bigotimes_{u\in B}\dd u) + k_2(\dd u,\dd u')k_2(\dd v,\dd v') 
                \\
                & + k_2(\dd u,\dd u')k_1(\dd v)k_1(\dd v') + k_1(\dd u)k_1(\dd u')k_2(\dd v,\dd v') + k_1(\dd u)k_1(\dd u')k_1(\dd v)k_1(\dd v')
                \\
                & - k_1(\dd v')\sum_{\substack{\mathbf{p} \textrm{ partition of }\\ \{u,u',v\}}}\bigotimes_{B\in\mathbf{p}}k_{\#B}(\bigotimes_{u\in B}\dd u) - k_2(\dd u,\dd v)k_2(\dd u',\dd v') \bigg\}.
            \end{align*}
            We  extract the partition $\{\{u,v\},\{u',v'\}\}$ from the first sum, and the partitions $\{\{u\},\{u'\},\{v\}\}$ and $\{\{u,u'\},\{v\}\}$ from the second one. Then we obtain
            \begin{multline}\label{eq:dec_q}
                \mathcal{Q}(\dd u,\dd u',\dd v,\dd v') = \ind_{\Delta_4}\bigg\{ \sum_{\mathbf{p}\in\Pi^1(u,u',v,v')}\bigotimes_{B\in\mathbf{p}}k_{\#B}(\bigotimes_{u\in B}\dd u) + k_2(\dd u,\dd u')k_2(\dd v,\dd v') 
                \\
                + k_1(\dd u)k_1(\dd u')k_2(\dd v,\dd v')  - k_1(\dd v')\sum_{\mathbf{p} \in\Pi^2(u,u',v)}\bigotimes_{B\in\mathbf{p}}k_{\#B}(\bigotimes_{u\in B}\dd u)  \bigg\}
            \end{multline}
            where 
            \begin{align*}
                \Pi^1(u,u',v,v') & := \{\mathbf{p} \textrm{ partition of }\{u,u',v,v'\} : \{v'\}\notin\mathbf{p} \textrm{ and }\mathbf{p}\neq\{\{u,v\},\{u',v'\}\} \}
                \\
                \Pi^2(u,u',v) & := \{\mathbf{p} \textrm{ partition of }\{u,u',v\} : |\mathbf{p}|<3 \textrm{ and }\mathbf{p}\neq\{\{u,u'\},\{v\}\} \}
                \\
                &~ = \bigg\{ \big\{ \{u,u',v\} \big\}, \big\{ \{u\}, \{u',v\} \big\}, \big\{ \{u'\}, \{u,v\} \big\} \bigg\}.
            \end{align*}
            We now  verify that the negative measure in the expression of $\mathcal{Q}(\dd u,\dd u',\dd v,\dd v')$ is compensated when we compute the integral $\int_{[0,T]^4}\ind_{|u-v|\leq\delta}\ind_{|u'-v'|\leq\delta}q(\dd u,\dd u',\dd v,\dd v')$. This directly holds by symmetry. Indeed, the integral with the partition $\big\{ \{u,u',v\} \big\}$ in the sum on $\Pi^2(u,u',v)$ in \eqref{eq:dec_q} that is 
            \[ \int_{[0,T]^4}\ind_{|u-v|\leq\delta}\ind_{|u'-v'|\leq\delta}\ind_{\Delta_4}k_1(\dd v')k_3(\dd u,\dd u',\dd v) \] 
            does not depend on the order of the variables in the measure $k_1(\dd v')k_3(\dd u,\dd u',\dd v)$. In particular, the value of the integral remains the same with what we get by using the partitions $\big\{\{u\},\{u',v,v'\}\big\}$, $\big\{\{u'\},\{u,v,v'\}\big\}$ and $\big\{\{v\},\{u,u',v'\}\big\}$ in the sum on $\Pi^1(u,u',v,v')$ in \eqref{eq:dec_q}. The same happens with: 
            \begin{itemize}
                \item the values of the integral with the partition $\big\{ \{u\}, \{u',v\} \big\}$ in the sum on $\Pi^2(u,u',v)$ and the partitions $\big\{ \{u'\}, \{v\}, \{u,v'\} \big\}$, $\big\{ \{u'\}, \{u\}, \{v,v'\} \big\}$ in the sum on $\Pi^1(u,u',v,v')$ in \eqref{eq:dec_q};
                
                \item the values of the integral with the partition $\big\{ \{u'\}, \{u,v\} \big\}$ in the sum on $\Pi^2(u,u',v)$ and the partition $\big\{ \{u\}, \{v\}, \{u',v'\} \big\}$ in the sum on $\Pi^1(u,u',v,v')$ in \eqref{eq:dec_q};
                
                \item the values of the integrals with the partitions $\big\{ \{u,u'\}, \{v,v'\} \big\}$ and $\big\{ \{u,v'\}, \{u',v\} \big\}$ in the sum on $\Pi^1(u,u',v,v')$ in \eqref{eq:dec_q}.
            \end{itemize}
            We then deduce from \eqref{eq:dec_q} that
            \begin{align*}
                & \int_{[0,T]^4}\ind_{|u-v|\leq\delta}\ind_{|u'-v'|\leq\delta}\mathcal{Q}(\dd u,\dd u',\dd v,\dd v') 
                \\
                & = \int_{[0,T]^4}\ind_{|u-v|\leq\delta}\ind_{|u'-v'|\leq\delta}\ind_{\Delta_4}\bigg\{ k_4(\dd u,\dd u',\dd v,\dd v') + 2k_1(\dd u)k_3(\dd u',\dd v,\dd v') 
                \\
                &\qquad\qquad + 3k_2(\dd u,\dd u')k_2(\dd v,\dd v') + 2k_1(\dd u)k_1(\dd u')k_2(\dd v,\dd v') \bigg\},
            \end{align*}
            that ends the proof.
            
        \end{proof}
        From this Lemma and using the value of the variance $\V_0$ of $\fp(X_1,X_2)$ under the mean-field limit given in \eqref{eq:var_mf_limit}, we deduce that
        \begin{align*}
            & \Vo\big[ \fp(\bX_1,\bX_2) \big] - \V_0 
            \\
            &\quad = \frac{1}{M^2}\int_{[0,T]^2}\ind_{|u-v|\leq\delta}\ind_{u\neq v}k_2(\dd u,\dd v) 
            \\
            &\quad~ + \frac{1}{M^3}\int_{[0,T]^3}\ind_{|u-v|\leq\delta}\ind_{|u-v'|\leq\delta}\bigg(\mathcal{G}(\dd u,\dd v,\dd v') - 2k_1(\dd u)k_1(\dd v)k_1(\dd v')\bigg)
            \\
            &\quad~ + \frac{1}{M^4}\int_{[0,T]^4}\ind_{|u-v|\leq\delta}\ind_{|u'-v'|\leq\delta}\mathcal{Q}(\dd u,\dd u',\dd v,\dd v').
        \end{align*}
        This difference is positive. Using \eqref{eq:cumul_order-1234} in Lemma~\ref{lem:cumulant}, it satisfies the following bound for a certain constant $C>0$ (that can change from a line to another)
        \begin{align*}
            &\Vo\big[ \fp(\bX_1,\bX_2) \big] - \V_0 
            \\
            &~ \leq C\bigg\{ \frac{a\nu}{M(1-\frac{a}{b})^2}\int_{[0,T]^2}\ind_{|u-v|\leq\delta}\dd u\dd v
            \\
            &\quad + \frac{a\nu}{M(1-\frac{a}{b})^3}\int_{[0,T]^3}\ind_{|u-v|\leq\delta}\ind_{|u-v'|\leq\delta} 
            \\
            &\qquad\quad \times\bigg[ \frac{a}{M}e^{-(b-a)[\max\{u,v,v'\} - \min\{u,v,v'\}]} + \nu\bigg(e^{-(b-a)|v-v'|} + e^{-(b-a)|u-v'|} \bigg) \bigg]\dd u\dd v\dd v'
            \\
            &\quad + \frac{a\nu}{M(1-\frac{a}{b})^4}\int_{[0,T]^4}\ind_{|u-v|\leq\delta}\ind_{|u'-v'|\leq\delta}\bigg[ \frac{a^2}{M^2}e^{-(b-a)[\max\{u,u',v,v'\}-\min\{u,u',v,v'\}]} 
            \\
            &\qquad\quad + \frac{a\nu}{M}\bigg(e^{-(b-a)[\max\{u',v,v'\}-\min\{u',v,v'\}} + e^{-(b-a)(|u-u'| + |v-v'|)} \bigg) + \nu^2e^{-(b-a)|v-v'|} \bigg]\\
            & \qquad\quad\qquad \qquad\quad\qquad\quad\qquad\quad\qquad\quad\qquad\quad \dd u\dd u'\dd v\dd v' \bigg\},
        \end{align*}
        and then 
        \begin{align*}
            & \Vo\big[ \fp(\bX_1,\bX_2) \big] - \V_0  \leq C\bigg\{ \frac{a\nu\delta T}{M(1-\frac{a}{b})^2} + \frac{a\nu(\nu+\frac{a}{M})}{M(1-\frac{a}{b})^3}\underbrace{\int_{[0,T]^3}\ind_{|u-v|\leq\delta}\ind_{|u-v'|\leq\delta}\dd u\dd v\dd v'}_{\leq 4\delta^2T}
            \\
            &\quad + \frac{a\nu(\nu+\frac{a}{M})^2}{M(1-\frac{a}{b})^4}\int_{[0,T]^4}e^{-(b-a)|v-v'|}\ind_{|u-v|\leq\delta}\ind_{|u'-v'|\leq\delta}\dd u\dd u'\dd v\dd v' \bigg\}
            \\
            &~ \leq C\bigg\{ \frac{a\nu\delta T}{M(1-\frac{a}{b})^2} + \delta^2T\frac{a\nu(\nu+\frac{a}{M})}{M(1-\frac{a}{b})^3} + \delta^2\frac{a\nu(\nu+\frac{a}{M})^2}{M(1-\frac{a}{b})^4}\underbrace{\int_{[0,T]^2}e^{-(b-a)|v-v'|}\dd v\dd v'}_{\leq\, 2T/(b-a)}\bigg\}
            \\
            &~ \leq C\frac{aT}{M} \frac{\nu\delta}{(1-\frac{a}{b})^2}\bigg\{ 1 + \frac{(\nu+\frac{a}{M})\delta}{1-\frac{a}{b}}\left[ 1 + \frac{b(\nu+\frac{a}{M})}{(b-a)^2} \right] \bigg\}.
        \end{align*}
        Replacing $ a/b$ by $ \ell $ this implies  \eqref{eq:var_Hawkes} and ends the proof. 
    
    \subsection{\texorpdfstring{Proof of Theorem~\ref{theo:Hawkes}}{Proof of the lower bound}} 
    
        Let us first notice that the variance $\V_0$ of $\fp(X_1,X_2)$ under the mean-field limit (see \eqref{eq:var_mf_limit}) satisfies the bound
        \[ \V_0 \leq \frac{4\nu^2\delta T}{(1-\ell)^2}\left( 1 + \frac{2\nu\delta}{1-\ell} \right). \]
        We deduce from 
        \eqref{eq:var_Hawkes_ind} and \eqref{eq:var_Hawkes} that there exists a constant $C>0$ such that the criterion \eqref{eq:20230127_1} in Lemma~\ref{lem:critgen} which ensures the 
        control of the type II error of the permutation test $\ind_{\bC_n(\X_n)>q_{1-\alpha}(\X_n)}$ by $\beta$ holds if
        \[
             \Delta_\varphi \geq  
             \frac{C}{\sqrt{n\alpha\beta}}\sqrt{\frac{\nu^2\delta T}{(1-\ell)^2}\left( 1 + \frac{2\nu\delta}{1-\ell }\right) + \frac{aT}{M} \frac{\nu\delta}{(1-\ell)^2}\bigg\{ 1 + \frac{(\nu+\frac{a}{M})\delta}{1-\ell} \left[ 1 + \frac{\nu+\frac{a}{M}}{b (1-\ell)^2} \right] \bigg\}} .  
        \]
        For the sake of simplicity, we denote by $\mathcal{K}$ the term in the square root. Then we obtain
        \begin{align*}
            \mathcal{K} & = \frac{\nu\delta T}{(1-\ell)^2}\bigg\{ \nu\left( 1 + \frac{2\nu\delta}{1-\ell} \right) + \frac{a}{M}\bigg\{ 1 + \frac{(\nu+\frac{a}{M})\delta}{1-\ell} \left[ 1 + \frac{\nu+\frac{a}{M}}{b(1-\ell)^2} \right] \bigg\} \bigg\}
            \\
            & \leq 2\frac{\nu(\nu+\frac{a}{M})\delta T}{(1-\ell)^2} \bigg\{ 1 + \frac{(\nu+\frac{a}{M})\delta}{1-\ell} + \frac{a(\nu+\frac{a}{M})\delta}{b M(1-\ell)^3} \bigg\}
            \\
            & = 2\frac{\nu(\nu+\frac{a}{M})\delta T}{(1-\ell)^2} \bigg\{ 1 + \frac{(\nu+\frac{a}{M})\delta}{1-\ell}\bigg[ 1 + \frac{\ell}{M(1-\ell)^2} \bigg] \bigg\}.
        \end{align*}
        Using that $ \nu \le \nu + a/M,$ this implies the condition \eqref{eq:crit_Hawkes}. 
        
        Finally, to prove \eqref{eq:min_size_Hawkes}, recall that \eqref{eq:bound_Hawkes} gives the lower bound  
        \[
        \Delta_\varphi\geq   \frac{\nu\delta\ell}{M(1-\ell)^3} \frac{b(1-\ell)T}{4}  \frac{1-e^{-b(1-\ell)\delta}}{b(1-\ell)\delta}= \frac14  \frac{\nu \ell T}{M(1- \ell)^3}(1-e^{-b(1-\ell)\delta}) .
        \]
        
        Therefore we only need $n$ large enough such that the right hand side above is larger than the right hand side of \eqref{eq:crit_Hawkes}, implying the bound given in \eqref{eq:min_size_Hawkes}. This concludes the proof.

\subsection{Remaining proofs}\label{app:probabilisticresults}
We now give the proof of the fact that the process $Z^2 $ introduced in Model~\ref{mod:jittering} is still a Poisson process of intensity $ \eta^1.$

\begin{prop}\label{prop:z2estunpoisson}
Let $ Z^1$ be a Poisson process on $\R$ with intensity $ \eta^1$. Denote by $\ldots <  u_{k-1} < u_k < u_{k+1} < \ldots $ its points. Let moreover $ (\xi_i)_{i \in \mathbb{Z}}$ be an \textit{i.i.d.} sequence of real valued random variables, independent of $Z$. Then the point process $Z^2$, having the points $ u_k + \xi_k, k \in \mathbb{Z},$ is still a Poisson process with intensity $\eta^1$. 
\end{prop}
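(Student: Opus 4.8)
The plan is to identify $Z^2$ as the image of the Poisson process $Z^1$ under an independent random displacement and to establish the claim by computing its Laplace functional. Writing $Z^2 = \sum_k \delta_{u_k + \xi_k}$, this is a special case of Kingman's displacement theorem: each atom $u_k$ is moved to $u_k + \xi_k$ by an independent displacement whose law acts by translation, so the intensity is preserved. Rather than invoke the theorem as a black box, I would give a self-contained argument through the Laplace functional, since this simultaneously yields the Poisson property and pins down the intensity.

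Concretely, fix a nonnegative measurable $f$ with compact support and set $g(u) = \E[e^{-f(u+\xi)}]$, where $\xi$ is a generic copy of the displacement variable. Conditioning on $Z^1$ and using that the $(\xi_k)$ are i.i.d. and independent of $Z^1$, the conditional Laplace transform factorizes:
\[
\E\Big[\exp\Big(-\sum_k f(u_k+\xi_k)\Big)\;\Big|\;Z^1\Big] = \prod_k g(u_k) = \exp\Big(\sum_k \log g(u_k)\Big).
\]
Taking expectation over $Z^1$ and applying the Laplace functional of the Poisson process $Z^1$ of intensity $\eta^1$ to the function $-\log g$ gives
\[
\E\Big[\exp\Big(-\textstyle\int f \,\dd Z^2\Big)\Big] = \exp\Big(-\eta^1\!\int_{\R}\big(1 - g(u)\big)\,\dd u\Big).
\]
It then remains to evaluate the exponent. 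By Tonelli (the integrand is nonnegative) and the translation invariance of Lebesgue measure,
\[
\int_{\R}\big(1 - \E[e^{-f(u+\xi)}]\big)\,\dd u = \E\Big[\int_{\R}\big(1-e^{-f(u+\xi)}\big)\,\dd u\Big] = \int_{\R}\big(1-e^{-f(w)}\big)\,\dd w,
\]
where the change of variable $w = u+\xi$ is made for each fixed $\xi$ and $\xi$ then integrates to total mass one. Hence $\E[\exp(-\int f \,\dd Z^2)] = \exp(-\eta^1\int_{\R}(1-e^{-f(w)})\,\dd w)$, which is exactly the Laplace functional of a Poisson process of intensity $\eta^1$; the characterization of Poisson processes by their Laplace functional concludes.

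The step requiring the most care is not the algebra but the \emph{well-definedness} of $Z^2$ as a locally finite point process: a priori, atoms of $Z^1$ located arbitrarily far from a bounded window could be displaced into it, so accumulation must be excluded. This is settled by a Campbell/Fubini computation showing that for any bounded interval $B$ the expected number of displaced points in $B$ equals $\eta^1\int_{\R}\eP(u+\xi\in B)\,\dd u = \eta^1\,\E\big[|\{u : u+\xi\in B\}|\big] = \eta^1\,|B| < \infty$, so $Z^2$ charges $B$ with finitely many points almost surely. Once local finiteness is secured the Laplace-functional computation is fully rigorous, and the translation invariance of Lebesgue measure is precisely the structural fact keeping the intensity equal to $\eta^1$.
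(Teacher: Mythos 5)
Your proof is correct, and it takes a genuinely different route from the paper's. The paper's argument is a two-line first-moment computation: it shows by the change of variables $s = t + \xi$ that the intensity measure of $Z^2$ is $\eta^1\,\dd t$, and then delegates the Poisson property itself to a classical characterization result (Theorem II.4.5 of Jacod--Shiryaev, on point processes with deterministic compensators). You instead compute the full Laplace functional of $Z^2$ by conditioning on $Z^1$, factorizing over the i.i.d.\ displacements, and applying the exponential formula for the Poisson process $Z^1$ to $-\log g$ --- essentially a self-contained proof of Kingman's displacement theorem in the translation-invariant case. What your approach buys is that the Laplace functional pins down the entire law at once, so the Poisson property and the intensity come out of a single computation with no external citation; you also explicitly verify local finiteness of $Z^2$ via a Campbell-type estimate, a point the paper passes over silently. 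What the paper's approach buys is brevity: once one accepts the cited characterization, only the intensity measure needs to be computed. Both arguments hinge on the same structural fact --- translation invariance of Lebesgue measure --- appearing in your proof as the identity $\int_{\R}(1-g(u))\,\dd u = \int_{\R}(1-e^{-f(w)})\,\dd w$ and in the paper's as $\int_{\R}\Po(t+\xi\in A)\,\dd t = |A|$.
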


\begin{proof}
By construction, the intensity measure $\hat \mu $ of $ Z^2$ is given by $ \hat \mu (A   ) = \eta^1 \int_{ \R}  \Po ( t + \xi \in A ) \dd t$, for any Borel subset $A$ of $\R$. Using the change of variables $ s = \xi + t, $ we have
\[ \int_{ \R}  \Po ( t + \xi \in A )  \dd t= \E \int_\R \mathbbm{1}_A ( \xi + t ) \dd t = \E \int_\R \mathbbm{1}_A ( s) \dd s = \int_A \dd s , 
\]
which implies that the intensity measure of $ Z^2$ is still given by $ \eta^1 \dd t$. A classical result (see e.g. Theorem II.4.5 of \cite{jacod_limit_2003}) implies then that $ Z^2$ is a Poisson process of intensity $\eta^1$. 
\end{proof}

We close this section with an 

 \begin{proof}[Idea of the proof of Lemma~\ref{lem:Poisson}]
                Let us start by verifying that the stochastic integral in 
                \eqref{eq:stoch_int} makes sense. We construct it by introducing an order 
                between the instants $s_1,\cdots,s_{l}$. Using the partition 
                of $\Delta_{l} = \cup_{\pi\in\mathbb{S}_l} \Delta_{l}^{\pi}$, where for 
                any \(\pi\in\eS_{l}\) we set
                $ \Delta_{l}^{\pi} = \{(s_1,\cdots,s_{l})\in\R^{l}: s_{\pi(i)}<s_{\pi(j)} \textrm{ for any }i<j\} , $ we have
                \begin{align*}
                    & \int_{\Delta_{l}}F\left(s_1,\cdots, s_{l}, \xi_{N_{s_1}},\cdots,\xi_{N_{s_{l}}}\right)\dd N_{s_1}\cdots \dd N_{s_{l}}
                    \\
                    & = \sum_{\pi\in\eS_{l}}\int_{\Delta_{l}^{\pi}}F(s_1,\cdots,s_{l},\xi_{N_{s_1}},\cdots,\xi_{N_{s_{l}}})\dd N_{s_1}\cdots \dd N_{s_{l}}
                    \\
                    & = \sum_{\pi\in\eS_{l}}\int_{\R}\bigg\{\int_{-\infty}^{s_{\pi(l)}-} \cdots \bigg\{ \int_{-\infty}^{s_{\pi(2)}-} F(s_1,...,s_{l},\xi_{N_{s_1}},...,\xi_{N_{s_{l}}}) \dd N_{s_{\pi(1)}}\bigg\}\cdots \dd N_{s_{\pi(l-1)}} \bigg\} \dd N_{s_{\pi(l)}}.
                \end{align*}
                Since the sequence $\{\xi_i,i\in\mathbb{Z}\}$ is independent of $N$, the most internal integral satisfies
                \begin{align*}
                    & \int_{-\infty}^{s_{\pi(2)}-} F(s_1,\cdots,s_{l},\xi_{N_{s_1}},\cdots,\xi_{N_{s_{l}}}) \dd N_{s_{\pi(1)}} 
                    \\
                    &\qquad\qquad = \left( \int_{-\infty}^{s_{\pi(2)}-} F(s_1,...,s_{l},\xi_{N_{s_1}},x_2,...,x_{l}) \dd N_{s_{\pi(1)}} \right)_{\big| x_2 = \xi_{N_{s_2}}, ...,x_{l} = \xi_{N_{s_{l}}}}.
                \end{align*}
                We compute the less internal integrals thanks to the same argument, and the expectation given by $\eqref{eq:stoch_int}$ directly follows by conditionning.
            \end{proof}

\subsection*{Acknowledgments} This work was
 supported by the French government, through CNRS, the UCA$^{Jedi}$ and 3IA C\^ote d'Azur Investissements d'Avenir managed by the National Research Agency (ANR-15
IDEX-01 and ANR-19-P3IA-0002),  directly by the ANR project ChaMaNe (ANR-19-CE40-0024-02) and by the interdisciplinary Institute for Modeling in Neuroscience and Cognition (NeuroMod). %

\end{document}